\newtheorem{Theorem}{Theorem}[section]
\newtheorem{TheoremA}{Theorem}
\newtheorem{Lemma}[Theorem]{Lemma}
\newtheorem{Proposition}[Theorem]{Proposition}
\newtheorem{Corollary}[Theorem]{Corollary}
\newtheorem{Remark}[Theorem]{Remark}
\newtheorem{Claim}[Theorem]{Claim}
\newtheorem{Definition}[Theorem]{Definition}
\newtheorem{iDefinition}[Theorem]{(i)-Definition}
\newtheorem{Warning}{Warning}[section]
\definecolor{yellow(ncs)}{rgb}{1.0, 0.7, 0.0}
\definecolor{light}{gray}{0.9}
\newcommand{\cB}{\ensuremath{\mathcal B}}
\newcommand{\cE}{\ensuremath{\mathcal E}}
\newcommand{\cF}{\ensuremath{\mathcal F}}
\newcommand{\cG}{\ensuremath{\mathcal G}}
\newcommand{\cI}{\ensuremath{\mathcal I}}
\newcommand{\cL}{\ensuremath{\mathcal L}}
\newcommand{\cP}{\ensuremath{\mathcal P}}
\newcommand{\cR}{\ensuremath{\mathcal R}}
\newcommand{\cV}{\ensuremath{\mathcal V}}
\newcommand{\bbB}{{\ensuremath{\mathbb B}} }
\newcommand{\bbE}{{\ensuremath{\mathbb E}} }
\newcommand{\bbG}{{\ensuremath{\mathbb G}} }
\newcommand{\bbN}{{\ensuremath{\mathbb N}} }
\newcommand{\bbP}{{\ensuremath{\mathbb P}} }
\newcommand{\bbQ}{{\ensuremath{\mathbb Q}} }
\newcommand{\bbR}{{\ensuremath{\mathbb R}} }
\newcommand{\bbV}{{\ensuremath{\mathbb V}} }
\newcommand{\bbZ}{{\ensuremath{\mathbb Z}} }
\newcommand{\lrgh}{\leftrightarrow}
\newcommand{\rgh}{\rightarrow}
\newcommand{\ezd}{\e \bbZ^d}
\let\a=\alpha \let\b=\beta   \let\d=\delta  \let\e=\varepsilon
 \let\g=\gamma       \let\l=\lambda
      \let\o=\omega      
  \let\s=\sigma    
  \let\z=\zeta
\let\D=\Delta   \let\G=\Gamma  \let\L=\Lambda 
\let\O=\Omega      
\newcommand{\rrr}{\textcolor{black}}
\newcommand{\rui}{\textcolor{black}}
\newcommand{\kui}{\textcolor{black}}
\newcommand{\be}{\begin{equation}}
\newcommand{\en}{\end{equation}}
\author[A.\ Faggionato]{Alessandra Faggionato}
\address{Alessandra Faggionato.
  Department of Mathematics, University  La Sapienza.
  P.le Aldo Moro 2, 00185 Rome, Italy}
\email{faggiona@mat.uniroma1.it}
\author[H.A. Mimun]{Hlafo Alfie Mimun}
\address{Hlafo Alfie  Mimun.  Department of Economy and Finance, LUISS Guido Carli. Viale Romania 32, 00197  Rome, Italy}
\email{hmimun@luiss.it}
\thanks{This work has been partially supported by the ERC Starting Grant 680275 MALIG and by the Grant  PRIN 20155PAWZB "Large Scale Random Structures"}
\title[]{Left-right crossings in the Miller-Abrahams random resistor network and in  generalized Boolean models}
\begin{document}
{\maketitle}

%

\begin{abstract} We consider  random graphs  $\cG$ built on a homogeneous  Poisson point process on $\bbR^d$, $d\geq 2$, with points   $x$ marked by i.i.d. random variables $E_x$.  Fixed a symmetric function $h(\cdot, \cdot)$, the vertexes of $\cG$ are given by points of the Poisson point process, while the edges   are given by pairs $\{x,y\}$ with $x\not =y$ and $|x-y|\leq h(E_x,E_y)$. We call $\cG$ \emph{Poisson $h$--generalized  Boolean model}, as one recovers the standard Poisson Boolean model by taking $h(a,b):=a+b$ and $E_x\geq 0$. Under general conditions, we show that in the supercritical phase the maximal number of vertex-disjoint left-right crossings in a box of size $n$ is lower bounded by $Cn^{d-1}$ apart from  an event of exponentially small probability. As special applications, when the marks are non-negative,    we consider the   Poisson Boolean model and its generalization to $h(a,b)=(a+b)^\g$ with $\g>0$,   the weight-dependent random connection models with max-kernel and with  min-kernel  and  the   graph obtained from the  Miller-Abrahams random resistor network in which  only  filaments  with  conductivity lower bounded by a fixed positive constant are kept.
 
\smallskip

\noindent 
{\em Keywords}:
Poisson point process, Boolean model,   Miller--Abrahams random resistor network, left-right crossings, renormalization.

\smallskip

\noindent
{\em AMS 2010 Subject Classification}: 
60G55, 
82B43, 
82D30 
\end{abstract}

\section{Introduction} 
We introduce   a random graph $\cG$, called \emph{Poisson $h$--generalized Boolean model},  with vertexes in $\bbR^d$ where
$d\geq 2$, 
  whose construction depends on a structural symmetric function $h(\cdot, \cdot)$ with real entries, a parameter $\l>0$ and  a probability measure $\nu$ on $\bbR$. 
Given a homogeneous Poisson point process  (PPP) $\xi$ on $\bbR^d$ with \rui{intensity}   $\l$, we mark points $x$ of $\xi$ by i.i.d. random variables $E_x$ with common distribution $\nu$. Then the vertexes of $\cG$ are the points in  $\xi$, while edges of $\cG$ are given by unordered  pairs of vertexes $\{x,y\}$ with $x\not =y$ and $|x-y|\leq h(E_x,E_y)$. \rui{When $E_x\in [1,+\infty)$ and $h(\cdot, \cdot)$ is non--decreasing in each entry, then $\cG$ belongs to the class of  \emph{weight-dependent random connection models} introduced in \cite{GHMM}}. 

\smallskip

When $\nu $ has support inside $[0,+\infty)$ and $h(a,b)=a+b$, one recovers the so called Poisson Boolean model \cite{MR}. Another relevant example is related to the  Miller-Abrahams (MA) random resistor network. This resistor network  has been introduced in \cite{MA} to study 
the anomalous conductivity at low temperature in amorphous materials as doped semiconductors,  in the regime of  Anderson localization and at  low density of impurities. It has been further investigated  in the physical literature (cf.  \cite{AHL}, 
\cite{POF}  and references therein), where percolation properties have been heuristically analyzed.
A fundamental target has been 
 to get a more robust derivation of  the so called  Mott's law, which  is a physical law predicting the  anomalous decay of electron conductivity  at low temperature (cf. \cite{Fhom2,F_final,FM,FSS,POF} and references therein).
 
 When built on a marked  PPP $\{(x,E_x)\,:x\in \xi\}$ as above,  the MA random resistor network has nodes given by points in $\xi$ and  electrical filaments connecting each pair of nodes. The  electrical conductivity of the filament between $x$ and $y$ is given by (cf. \cite[Eq. (3.7)]{AHL})
\be\label{condu}
c(x,y):=\exp\Big\{ - \frac{2}{\gamma} |x-y| -\frac{\b}{2} ( |E_x|+ |E_y|+ |E_x-E_y|) \Big\}\,,
\en
where $\b$ is the inverse temperature and $\g$ is the so--called  localization  length. The physically relevant distributions $\nu$ (for inorganic materials) are of the form $\nu_{\rm phys}(dE)\propto
\mathds{1} ( |E| \leq a_0) |E|^\a dE $ with  $\a\geq 0 $ and $a_0>0$. 

Note that the conductivity of the filaments is smaller  than $1$. Fixed $c_0\in (0,1)$, the resistor subnetwork $\cG$ given by the filaments with conductivity lower bounded by $c_0$ is  a Poisson $h$--generalized  Boolean model  
with $h(a,b):=-(\g/2) \ln c_0  -(\g \b/4) (|a|+|b|+|a-b|)$. We point out that, in the low temperature regime (i.e. when
$\b \uparrow \infty$), the conductivity of the MA random resistor network is mainly supported  by  the subnetwork $\cG$ associated to   a suitable constant $c_0=c_0(\b)$ (cf. \cite{AHL,F_final}).
Hence,  to shorten the terminology and  with a slight abuse, in the rest we call $\cG$ itself  ``MA random resistor network".  Moreover, without loss of generality, we take $\b=\g=2$.

\smallskip
For the supercritical Bernoulli site and bond percolations on $\bbZ^d$ with $d\geq 2$, it is known that the maximal number of
vertex-disjoint left-right  crossings of a box $[-L,L]^d$
  is lower bounded by $C L^{d-1}$ \rui{apart from}  an event with  exponentially small probability (cf. \cite[Thm.~(7.68), Lemma~(11.22)]{G} and \cite[Remark~(d)]{GM}). A similar result is proved for the supercritical  Poisson Boolean model with deterministic radius  in \cite{T}. Our main result is that, under  general suitable conditions,  
    the same behavior holds  for the $h$--generalized Boolean model (cf. Theorem \ref{teo1}).  This result  for  the MA random resistor network  is relevant when studying the  low--temperature conductivity in amorphous solids (cf. \cite{AHL,F_final}). 
\smallskip

 We comment now some technical aspects in the derivation of our contribution.
 To prove Theorem \ref{teo1} we  first show that  it is enough to derive  a  similar result (given by Theorem 
 \ref{teo2} in Section \ref{sec_discreto})
for  a suitable random graph $\bbG_+$ with vertexes in  $\ezd$, defined in terms of i.i.d. random variables parametrized by points in $\ezd$. 
The
 proof of Theorem \ref{teo2}  is then  inspired by the renormalization procedure developed  by Grimmett and Marstrand in \cite{GM} for site percolation on $\bbZ^d$ and  by a construction 
 presented by Tanemura in  \cite[Section 4]{T}. We recall that   in \cite{GM} it is proved  
 that the critical  probability of  a slab in $\bbZ^d$ converges to the critical probability  of  $\bbZ^d$  when the  thickness of the slab goes to $+\infty$.  
 
  We point out that the renormalization method developed in \cite{GM} 
does not apply verbatim to  our case. In particular the adaptation of   Lemma 6 in \cite{GM} to our setting presents several obstacles due to  spatial correlations in our model. 
A main novelty here is to    build, by a Grimmett-Marstrand-like renormalization procedure, an increasing family of  quasi-clusters in our graph $\bbG_+$. We use here the term  ``quasi-cluster''  since  usually these sets  are    not  connected in $\bbG_+$ and can present some cuts at suitable localized regions.  By 
  expressing  the PPP of \rui{intensity} $\l$ as  superposition of two independent PPP's with \rui{intensity} $\l -\d$ and $\d\ll 1 $,  respectively,  a quasi-cluster is built only by means of  points in the PPP with \rui{intensity} $\l-\d$.  On the other hand, we will show that, with high probability,  when superposing the PPP with \rui{intensity} $\d$ we will insert a family of points linked with the quasi-cluster, making the resulting set connected in 
  $\bbG_+$.   \rui{This construction relies  on the idea of ``sprinkling" going back to \cite{ACCFR} (see also \cite{G} and references therein)}.
  
We  remark that in \cite{MT}  Martineau and Tassion have extended, with some modifications and simplifications, the Grimmett-Marstrand renormalization scheme.  Nevertheless, we have followed here the  construction  in \cite{GM} since it  is more suited to be combined with Tanemura's algorithm, which prescribes step by step to build  clusters in $\bbG_+$ along the axes of $\bbR^d$ (while in \cite{MT} there is a good building direction, which is not explicit). On the other hand, the fundamental steps in our Grimmett-Marstrand-like renormalization scheme are slightly simplified w.r.t. the original ones in \cite{GM}.


As special \rui{applications} of Theorem \ref{teo1} we consider, \rui{for non-negative  marks, the
cases $h(a,b):=(a+b)^\g$ with  $\g>0$, $h(a,b):=\max(a,b)$, $h(a,b):=\min(a,b)$ and $h(a,b):= \z - (|a|+|b|+|a-b|)$ with $\z>0$  (see Corollaries \ref{cor1} and  \ref{cor2})}. 
 The first case, with $\g=1$, generalizes Tanemura's result to  the  Poisson Boolean model with random radius. \rui{The second and third  cases correspond respectively to  the min-kernel and the max-kernel random connection model (see \cite{GHMM} and references therein)}.
The \rui{fourth} case corresponds to the MA random resistor network with non-negative energy marks. Although this result does not cover the mark distributions $\nu_{\rm phys}$ mentioned above, it  is suited to distributions $\nu(dE)\propto
\mathds{1} ( 0\leq E \leq a_0) |E|^\a dE $ with  $\a\geq 0 $ and $a_0>0$, which have similar scaling properties to the physical ones $\nu_{\rm phys}$.  We stress  that these scaling properties  are relevant in the heuristic derivation of Mott's law as well as in its rigorous analysis \cite{F_phys,F_final}. The restriction to non-negative marks 
in the MA random resistor network comes from the fact that  the Grimmett-Marstrand method (as well as  its extension in \cite{MT})  
 relies on the FKG inequality. As \rui{one can check}, when considering  the MA random resistor network with marks having different signs,  the FKG inequality can fail.  
  
For the reader's convenience, an outline of the paper is provided in Section \ref{pigolano} after the presentation of models and main results.
Finally, we point out that percolation results  for the subcritical MA random resistor network have been obtained in \cite{FMim1}.

\section{Model and main results}\label{moda}
We introduce a class of random graphs built by means of a   symmetric  \emph{structural function} 
\be 
h:\D\times \D \to \bbR\,,\qquad \D\subset \bbR\,.
\en
To this aim we call $\O$  the space  of locally finite sets of marked  points  in $\bbR^d$, $d\geq 2$, with marks in $\D$. More precisely, a generic element   $\o\in \O$ has the form $\o= \{ (x, E_x)\,:\, x\in \xi\}$, where $\xi$ is a locally finite subset of $\bbR^d$   and $E_x \in \D $ for any $x\in \xi$ ($E_x$ is thought of as the mark of point $x$).  It is standard (cf. \cite{DV})  to define on $\O$ a distance such that the $\s$--algebra of Borel sets $\cB$ of $\O$
is generated by the sets $\{ |\o \cap A|=n\}$, $A$  varying among the Borel subsets of $\bbR^d$ and $n $  varying in $\bbN$. We assume that  $(\O,\cB)$ is endowed with  a probability measure $P$, thus defining  a marked simple  point process.  

\begin{Definition}[\rui{Graph $\cG[\o]$}]
To each $\o= \{ (x, E_x)\,:\, x\in \xi\}$ in $ \O$  we associate the unoriented graph $\cG[\o]=\bigl( \cV[\o], \cE[\o]\bigr)$ with vertex set $\cV[\o]:=\xi$ and edge set $\cE[\o]$ given by the unordered  pairs $\{x,y\}$ with $x\not =y$ in $\xi$ and
\be\label{pico}
|x-y| \leq h(E_x,E_y) \,. 
\en
We call $h$--generalized Boolean model the resulting random graph $\cG=\cG[\o]$ defined on $(\O,\cB,P)$.
\end{Definition}

   When $\D:=\bbR_+$ and  $h(a,b):=a+b$, one has indeed the so--called Boolean model \cite{MR}. As discussed in the Introduction, another relevant example is given by the MA  random resistor network with lower bounded conductances: in this case $\D=\bbR$ and, fixed the parameter $\z>0$, the structural function $h: \bbR\times \bbR\to \bbR$ is given by 
 \be \label{hMA}
 h(a,b):=\z-\bigl( |a|+|b|+|a-b|\bigr)\,. 
 \en

We focus here on  the left-right  crossings of the graph $\cG=\cG[\o]$:
 
\begin{Definition}[\rui{LR crossing and $\cR_L(\cG)$}] \label{def_LR}
Given  \rui{$L\in (0,+\infty)$}, a left-right (LR) crossing  of  the box $[-L,L]^d$ in the graph  $\cG$    is any  sequence of distinct points $x_1,x_2 , \dots, x_n \in \xi$ such that 
\begin{itemize}
\item $\{x_i ,x_{i+1}\} \in  \cE$ for all $i=1,2,\dots, n-1$;
\item $x_1 \in (-\infty,-L)\times [-L,L]^{d-1} $;
\item $x_2, x_3,\dots, x_{n-1}\in [-L,L]^d$;
\item  $x_n \in (L, +\infty )\times [-L,L]^{d-1} $.
\end{itemize}
We also define \rui{$\cR_L(\cG)$}  as the  maximal number of  vertex-disjoint LR crossings of $[-L,L]^d$ in $\cG$.
\end{Definition}

In what follows, given $\l>0$ and a probability measure $\nu $ with support contained in   $\D$, we consider the marked Poisson point process (PPP)  obtained by sampling $\xi$ according to   a homogeneous PPP  with \rui{intensity} $\l$ on $\bbR^d$ ($d\geq 2$)  and marking each point $x\in \xi $ independently with a random variable $E_x$ having distribution $\nu$ (conditioning  to $\xi$, 
 the marks $(E_x)_{x\in\xi}$ are i.i.d. random variables with distribution $\nu$).  The above marked point process is   called \emph{$\nu$-randomization}  of  the PPP with \rui{intensity} $\l$ \rui{(cf.~\cite[Chp.~12]{Kal})}.  The resulting random graph $\cG$, whose construction depends also on the structural function $h$, will be denoted by $\cG(h,\l)=\cG(h,\l)[\o]$ when necessary (note that $\nu$ is understood).

\smallskip

To state our main assumptions, we recall that, given a generic graph with vertexes in $\bbR^d$, one  says that 
it  percolates if it has an unbounded connected component. We also  fix, once and for all, a constant  \rui{$\l>0$} and a probability measure $\nu $ on $\D$. We write ${\rm supp}(\nu)$ for the support of $\nu$.

 \medskip

 {\bf Assumptions}: 
  \emph{
 \begin{itemize}
 \item[(A1)]  $P$ is the $\nu$--randomization of a PPP with \rui{intensity} $\l$ on $\bbR^d$, $d\geq 2$.
   \item[(A2)] There exist $\l_*<\l$ and $\ell _* >0$ such that $\cG(h-\ell_*, \l_*)$ percolates  a.s..
   \item[(A3)] $\sup \left\{ h(a,b)\,:\, a,b \in \text{supp}(\nu) \right\} \in (0,+\infty)$.
     \item[(A4)]  For any $\d>0$ there exists 
      a Borel subset $U_*(\d)\subset \D$ with $\nu(U_*(\d))>0$ such that 
  \kui{ \be\label{indigestione}
  \inf_{ a\in U_*(\d)  }  h(a,b)
   \geq \sup_{a'\in {\rm supp}(\nu)} h (a',b) -\d \,, \qquad \forall  b\in {\rm supp}(\nu)\,.
   \en}
    \item[(A5)]  As $a,b $ vary in ${\rm supp}(\nu)$,  $h(a,b)$ is weakly decreasing both in $a$ and in $b$ (shortly, $h \searrow$), or $h(a,b)$ is weakly increasing both in $a$ and in $b$ (shortly, $h \nearrow$).
    \end{itemize}
}

Let us comment our assumptions. 

 The definition of $h$ is relevant only for entries in ${\rm supp(\nu)}$, hence one could as well restrict to the case $\D={\rm supp}(\nu)$.
Since the $h$--generalized Boolean model presents spatial correlations by its own definition, Assumptions (A1) avoids  further spatial correlations inherited from the marked point process. 

By a simple coupling argument, (A2) implies that $\cG(h-\ell', \l')$ percolates a.s. for any $\ell'\in [0,\ell_*]$ and $\l'\in [\l_*, \l]$. Hence, (A2) assures some  form of ``stable supercriticality'' of the graph $\cG(h, \l)$.

 Due to \eqref{pico},  (A3) both excludes the  trivial case $h\leq 0$ on ${\rm supp}(\nu)$  (which would imply that 
$\cG(h,\l)$  has no edges) and guarantees    that the length of the edges of $\cG(h,\l)$ is a.s.  bounded by some 
deterministic  constant.    

We move to (A4). By definition of supremum and due to (A3), for any $\d>0$ and for any $b\in {\rm supp}(\nu) $ there exists $a\in {\rm supp}(\nu)$ such that  $h(a,b)\geq \sup_{a' \in {\rm supp}(\nu) }h(a,b)-\d$. Assumption (A4) enforces this free inequality requiring that   it is satisfies uniformly in $a$ varying in some subset $U_*(\d)$ with positive $\nu$--measure. For example, if  $h$ is continuous, ${\rm supp}(\nu)$ is bounded  and (A5) is satisfied, then (A4) is automatically satisfied. Indeed, if e.g. $h \nearrow$, then $
\sup_{a' \in {\rm supp}(\nu) }h(a',b)=h(A,b)$ with $A:=\max \bigl(  \text{supp}(\nu)\bigr)$ and the claim follows by uniform continuity.

We move to (A5). This assumption implies that we enlarge the graph  when reducing the marks if $h \searrow $   or increasing the marks if $h \nearrow$. Moreover, (A5)    guarantees the validity of the FKG inequality (cf. Section \ref{scremato}), which in general can fail.

\medskip

Our main result is the following one:
\begin{TheoremA} \label{teo1} 
Suppose that the \rui{intensity} \rui{$\l>0$} and the mark probability distribution $\nu$ satisfy the above Assumptions (A1),...,(A5). Then 
 there exist 
positive constants $c, c'$ such that  
\begin{equation}\label{stimetta}
P \left( \rui{\cR_L(\cG)} \geq c L^{d-1} \right) \geq 1- e^{- c' L^{d-1}} \,,
\end{equation}
for $L$ large enough, where $\cG= \cG(h,\l)$.
\end{TheoremA}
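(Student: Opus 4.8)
The plan is the two-step scheme sketched in the Introduction. First I reduce \eqref{stimetta} to an analogous estimate for a discrete, finite-range site-percolation model $\bbG_+$ on $\ezd$ whose vertex states are i.i.d., and then I prove the discrete statement by a Grimmett-Marstrand-type renormalization combined with Tanemura's combinatorial construction of crossings. For the discretization, put $h^*:=\sup\{h(a,b):a,b\in{\rm supp}(\nu)\}$, which lies in $(0,+\infty)$ by (A3). Fix $\d>0$ small, let $U_*(\d)\subset\D$ be as in (A4), and partition $\bbR^d$ into half-open cubes $\{Q_u\}_{u\in\ezd}$ of side $\e$, with $\e$ small. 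Declare $u\in\ezd$ occupied if $Q_u$ contains a Poisson point whose mark lies in $U_*(\d)$; by (A1) the occupation variables are i.i.d.\ with $P(u\text{ occupied})>0$, and each depends only on the marked PPP inside $Q_u$. Let $\bbG_+$ be the graph on $\ezd$ with vertices the occupied sites and an edge between occupied $u,v$ whenever $0<|u-v|\le h^*-2\d-\e\sqrt d$. Applying (A4) twice together with the symmetry of $h$, any two near-optimal points $x\in Q_u$, $y\in Q_v$ with $u,v$ adjacent in $\bbG_+$ satisfy $|x-y|\le|u-v|+\e\sqrt d\le h^*-2\d\le h(E_x,E_y)$, hence are joined by an edge of $\cG(h,\l)$. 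Consequently every LR crossing of $[-L,L]^d$ in $\bbG_+$ lifts to one in $\cG$, distinct occupied cubes contribute distinct Poisson points, and vertex-disjoint crossings go to vertex-disjoint crossings; thus it suffices to bound $P\bigl(\cR_L(\bbG_+)\ge cL^{d-1}\bigr)$ from below. Finally, a coupling and coarse-graining argument based on (A2), with (A5) supplying the monotonicity needed to absorb the losses $\e\sqrt d$ and $2\d$ into the slack $\ell_*$ and $\l-\l_*$, shows that for $\d,\e$ small $\bbG_+$ percolates a.s.; moreover (A5) gives the FKG inequality for $\bbG_+$, on which the renormalization relies.

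For the renormalization step I adapt the block construction of \cite{GM}. Fix a large integer $N$ and, working in $N$-blocks, call a coarse site good if, inside the corresponding block and using only the thinned PPP of intensity $\l-\d$, one can build a ``quasi-cluster'' joining prescribed seeds on the faces of the block; such a set need not be connected, carrying at worst cuts confined to small localized regions. This localized non-connectedness is forced on us because, unlike in \cite{GM}, revealing part of $\cG$ influences cube states up to distance $\approx h^*$, so one cannot reveal and condition verbatim. One then establishes the analog of Lemma~6 of \cite{GM}: conditionally on the already explored region, a good coarse site occurs with probability tending to $1$ as $N\to\infty$, uniformly. Superposing the independent sprinkling PPP of intensity $\d$, with high probability one inserts points bridging all the localized cuts, making each quasi-cluster a genuine connected cluster of $\bbG_+$; this is the sprinkling idea going back to \cite{ACCFR}. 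The outcome is that the renormalized process of good coarse sites stochastically dominates a Bernoulli site percolation on $\bbZ^d$ of parameter as close to $1$ as desired.

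On this highly supercritical coarse lattice I run Tanemura's algorithm \cite[Section~4]{T}, which builds clusters step by step along the coordinate axes and produces, inside a coarse box of side $\sim L/N$, at least $c(L/N)^{d-1}$ vertex-disjoint coarse LR crossings on an event of probability at least $1-e^{-c''(L/N)^{d-1}}$, the standard Peierls and Chernoff type output for a near-deterministic $k$-dependent field (cf.\ \cite{G,GM,T}). Lifting these crossings back through the renormalization and then through the discretization yields at least $cL^{d-1}$ vertex-disjoint LR crossings of $[-L,L]^d$ in $\cG$ on an event of probability at least $1-e^{-c'L^{d-1}}$ for $L$ large, which is \eqref{stimetta}.

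The crux is the renormalization step: carrying out the Grimmett-Marstrand scheme in the presence of the intrinsic spatial correlations of $\cG$, whose edges have range $\approx h^*$, so that the clean conditional independence exploited in \cite{GM} fails. The device of building an increasing family of quasi-clusters from the intensity-$(\l-\d)$ PPP, controlling where their cuts sit, and then repairing them with the sprinkled intensity-$\d$ PPP is the technically heaviest ingredient, together with the bookkeeping needed to keep the crossings produced in the different coarse columns genuinely vertex-disjoint.
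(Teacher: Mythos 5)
Your overall scheme --- discretize, then combine a Grimmett--Marstrand renormalization with sprinkling and Tanemura's algorithm --- is the one the paper follows, and you correctly identify the main technical novelty (quasi-clusters with localized cuts, repaired by a sprinkled PPP). However, the discretization step contains a genuine gap that cannot be patched.

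You define a graph on $\ezd$ by calling $u$ occupied only if $Q_u$ contains a Poisson point with mark in $U_*(\d)$, and you put edges between occupied cubes within a fixed distance $h^*-2\d-\e\sqrt d$. The resulting object is a Boolean model with deterministic radius built on the thinned point process $\{x\in\xi : E_x\in U_*(\d)\}$, whose intensity is $\l\,\nu\bigl(U_*(\d)\bigr)$. You then assert that (A2) and (A5) force this graph to percolate for $\d,\e$ small. That is false: as $\d\to 0$ the density $\nu\bigl(U_*(\d)\bigr)$ typically tends to $0$ (for instance, for $h(a,b)=a+b$ with $\nu$ uniform on $[0,1]$, any valid $U_*(\d)$ has $\nu$-measure at most $\d$), so the thinned process can be made arbitrarily sparse, while at the same time $\d$ has to be taken small so that the geometric losses $2\d$ and $\e\sqrt d$ are at most $\ell_*$. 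Assumption (A2) is a statement about the \emph{full} marked graph $\cG(h-\ell_*,\l_*)$; the supercritical cluster may be carried almost entirely by points whose marks are far from optimal, i.e.\ by points through which your discretized graph refuses to route. Shrinking radii or intensity via the slack $\ell_*$, $\l-\l_*$ cannot compensate for discarding a fraction $1-\nu\bigl(U_*(\d)\bigr)$ of the points that actually percolate.

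The paper avoids this by keeping \emph{all} non-empty cubes as vertices of its discretized graph $\bbG_+$: the vertex set is $\{z\in\ezd : A_z^{\rm au}\in\bbR\}$ where $A_z^{\rm au}$ is the extreme mark in the cube, and an edge joins $z,z'$ when $|z-z'|\le h(A_z^{\rm au},A_{z'}^{\rm au})-\a$. Percolation of this graph (through the sub-graph $\bbG_-$) then follows from (A2) directly. The near-optimal marks and the set $U_*$ are used only \emph{locally}, to define ``seeds'' --- small boxes all of whose cubes carry marks in $U_*$ --- serving as highly connected anchors in the renormalization, while the clusters joining seeds are free to pass through arbitrary vertices. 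Thus (A4) enters the paper's argument as a local density statement (seeds exist with positive probability everywhere), not as a definition of the vertex set. You would have to redo your discretization along those lines before the rest of your outline can be made rigorous.
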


The proof of Theorem \ref{teo1} is localized as follows: by Proposition \ref{fragolino} to get Theorem \ref{teo1} it is enough to prove Theorem \ref{teo2}. By Proposition \ref{carletto} to get Theorem \ref{teo2}  it is enough to prove \eqref{problema2d} \rui{in Proposition \ref{carletto}}. The proof of \eqref{problema2d} \rui{in Proposition \ref{carletto}} is given in Section \ref{sec_ginepro}.

\begin{Remark} We point out that \eqref{stimetta} cannot hold for all $L>0$, but fixed $L_0>0$ one can play with the constants $c,c'>0$ to extend \eqref{stimetta} to all $L\geq L_0$. Let us explain this issue. Calling   $\ell_0$  the supremum in (A3), we get that all edges of $\cG$ have length at most $\ell_0$. The event in the l.h.s. of \eqref{stimetta} implies that $\cR_L(\cG)\geq 1$ and therefore the PPP must contain  some point both  in $[-L-\ell_0,-L) \times [-L,L]^{d-1}$ and in $(L,L+\ell_0] \times [-L,L]^{d-1}$.  Hence the probability in \eqref{stimetta} is upper bounded  by $( 1- e^{-\l \ell_0 (2L)^{d-1}})^2$, which is of order $L^{2d-2}$ as $L\downarrow 0$.  As $d\geq 2$ we conclude that \eqref{stimetta} cannot hold for $L$ too small.
On the other hand, fix $L_0>0$ and suppose that \eqref{stimetta} holds for all   $L\geq L_1$ for some $L_1>L_0$. Take now $L\in [L_0,L_1]$.
Our assumptions imply that there exists a set $W\subset \bbR$ such that $\nu(W)>0$ and $r:=\inf _{a,b\in W} h(a,b)>0$ (cf. \eqref{mango}). Set $e_1:=(1,0,\dots, 0)$, $\ell:=\min(r,L_0)$  and let   $N$ be the minimal positive  integer such that $ N \ell /2 > L_1$.  Then   $\cR_L(\cG)\geq 1$ whenever each ball of radius $\ell/10$  and centered at   $k(\ell/2) e_1$    contains some   point $x $ of the Poisson process   with mark  $E_x\in W$, where $k$ varies from $-N-1$ to $N+1$. Note that this last event does not depend on $L$, and therefore the same holds for its positive probability. At this point, by playing with  $c,c'$ in \eqref{stimetta}, one can easily extend \eqref{stimetta} to all $L\geq L_0$.
\end{Remark}

We now discuss  some applications. 
\smallskip

Let $d\geq 2$, $\l >0$ and let $h:\bbR_+\times\bbR_+\rgh\bbR$ be \rui{one of the following  functions (for the first one $\g$ is a fixed positive constant): 
\be \label{hgamma}
 h(a,b):=(a+b)^\g\,,\qquad h(a,b):= \min (a,b)\,  \qquad 
h(a,b):=\max(a,b)\,.\en}
 Consider the graph $\cG=\cG(h,\l)$ built on the  $\nu$--randomization of a  PPP on $\bbR^d$ with \rui{intensity} $\l$. 
As proved in  Section \ref{bin_MA} (cf. Lemma \ref{silvestro}),  
if $\nu $ has bounded support and $\nu(\{0\})\not =1$, then  there exists a critical \rui{intensity} \rui{$\l_c$} \rui{in  $(0,+\infty)$} such that 
\begin{equation}\label{selva0}
P \bigl( \cG  \text{ percolates})= 
\begin{cases}
1 & \text{ if } \l>\l_c \,,\\
0 &\text{ if } \l < \l_c\,. 
\end{cases}
\end{equation}
\begin{Corollary}\label{cor1} Let $d\geq 2$  \rui{and let $h$ be  given by one of the functions  in \eqref{hgamma}}.
Consider the graph $\cG=\cG(h,\l)$ built on the  $\nu$--randomization of a  PPP on $\bbR^d$ with \rui{intensity} $\l>\l_c$, where the law $\nu$ has bounded support and $\nu(\{0\})\not =1$.  Then there exist $c,c'>0$ such that  \eqref{stimetta} is fulfilled for $L$ large  enough.
\end{Corollary}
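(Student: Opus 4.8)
\emph{Plan.} I would deduce Corollary \ref{cor1} from Theorem \ref{teo1}: for each of the three functions $h$ in \eqref{hgamma}, and for the given intensity $\l>\l_c$ and mark law $\nu$, I would check that Assumptions (A1)--(A5) hold. Write $A:=\max(\mathrm{supp}(\nu))$ and $m:=\inf(\mathrm{supp}(\nu))$; boundedness of $\mathrm{supp}(\nu)$ gives $A<+\infty$, while $\nu(\{0\})\neq1$ together with $\mathrm{supp}(\nu)\subset\bbR_+$ gives $A>0$. Assumption (A1) is exactly the hypothesis. Each of $h(a,b)=(a+b)^\g$, $\min(a,b)$, $\max(a,b)$ is continuous on $\bbR_+\times\bbR_+$ and weakly increasing in each entry, so (A5) holds with $h\nearrow$; consequently $\sup\{h(a,b):a,b\in\mathrm{supp}(\nu)\}=h(A,A)$, which equals $(2A)^\g$, $A$ or $A$ respectively, hence lies in $(0,+\infty)$: this is (A3). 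For (A4) I would invoke the observation recorded just after the Assumptions: since $h$ is continuous, $\mathrm{supp}(\nu)$ is compact and $h\nearrow$, one has $\sup_{a'\in\mathrm{supp}(\nu)}h(a',b)=h(A,b)$ for every $b$, and uniform continuity of $h$ on $[0,A]^2$ yields, for each $\d>0$, a number $\eta=\eta(\d)>0$ with $h(a,b)\geq h(A,b)-\d$ whenever $a\in(A-\eta,A]$, $b\in[0,A]$; then $U_*(\d):=\mathrm{supp}(\nu)\cap(A-\eta,A]$ satisfies $\nu(U_*(\d))>0$ (because $A\in\mathrm{supp}(\nu)$) and \eqref{indigestione} holds.

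The only real content is (A2). Fix $\l_1\in(\l_c,\l)$, so that $\cG(h,\l_1)$ percolates a.s.\ by \eqref{selva0}. Suppose first $m>0$. For $\theta\in(0,1)$ the dilation $x\mapsto\theta x$ maps a $\nu$--randomization of a PPP of intensity $\l_1$ onto a $\nu$--randomization of a PPP of intensity $\l_1\theta^{-d}$ and carries $\cG(h,\l_1)$ onto $\cG(\theta h,\l_1\theta^{-d})$, which therefore also percolates a.s. Choose $\theta\in\bigl((\l_1/\l)^{1/d},1\bigr)$ and set $\l_*:=\l_1\theta^{-d}<\l$ and $\ell_*:=(1-\theta)h(m,m)>0$. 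Since $h\nearrow$, for all $a,b\in\mathrm{supp}(\nu)$ one has $\theta h(a,b)=h(a,b)-(1-\theta)h(a,b)\leq h(a,b)-(1-\theta)h(m,m)=h(a,b)-\ell_*$; hence every edge of $\cG(\theta h,\l_*)$ is an edge of $\cG(h-\ell_*,\l_*)$ built on the same marked PPP, so $\cG(h-\ell_*,\l_*)$ percolates a.s., which is (A2). If instead $m=0$, the comparison $\theta h\leq h-\ell_*$ fails only for pairs of marks that are both small, hence only along very short edges; I would then add a sprinkling step, writing the intensity-$\l_*$ PPP as the superposition of an independent intensity-$(\l_*-\d)$ PPP and an intensity-$\d$ PPP, building an unbounded quasi-cluster from the former using only the above comparison restricted to marks bounded away from $0$, and using the extra intensity-$\d$ points to bridge the cuts localized near the small--mark regions. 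Once (A1)--(A5) are in hand, \eqref{stimetta} for $L$ large is exactly the conclusion of Theorem \ref{teo1} with $\cG=\cG(h,\l)$.

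The step I expect to be the main obstacle is precisely this control of small--mark vertices in (A2) when $0\in\mathrm{supp}(\nu)$: one must show that deleting them, or bridging across them, does not destroy the infinite cluster --- equivalently, a form of lower semicontinuity of the percolation probability under the uniform perturbation of $h$ into $h-\ell$ for the fixed law $\nu$. For $h=\min$ this is essentially transparent, since a vertex of mark $0$ is isolated and no path can travel far through small--mark vertices; for $h=\max$ and $h=(a+b)^\g$, where even a mark--$0$ vertex may have many neighbours, one has instead to check that the density lost by discarding small--mark vertices can be recovered inside the supercritical window $(\l_c,\l)$, which needs a careful, model--specific argument. All the remaining verifications are routine.
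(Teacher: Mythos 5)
Your reduction to checking Assumptions (A1)--(A5) is the right structure and matches the paper, and your verifications of (A1), (A3), (A4), (A5) are correct and essentially identical to the paper's. The issue is (A2), where you correctly identify the hard case — $\inf(\mathrm{supp}(\nu))=0$ — but leave it as a sketch that does not constitute a proof.

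The corollary's hypotheses allow $\nu$ to have mass near $0$ (and even an atom at $0$), so the ``$m=0$'' case is not a corner case to be handled later; it is the generic situation. Your suggested remedy — superpose an intensity-$(\l_*-\d)$ process to build a quasi-cluster using marks bounded away from $0$ and bridge the gaps with the remaining intensity-$\d$ points — is not carried out, and there is no obvious way to carry it out at this level of the argument: the sprinkling in the paper happens deep inside the renormalization scheme, not in the verification of (A2), and for the rescaling comparison $\theta h\leq h-\ell_*$ you would still need a uniform lower bound on $h(E_x,E_y)$ along edges of the small-mark-thinned graph, which brings you back to the same difficulty. What the paper does instead is a stochastic-domination trick that dispenses with sprinkling entirely: fix $c>0$ small and replace $\nu$ by $\mu:=p_0\d_0+(1-p_0)\nu(\cdot\,|\,[c,\infty))$, which stochastically dominates $\nu$ (and hence $\cG$ built with $\mu$-marks still percolates by (A5)) and which puts no mass in $(0,c)$. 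Now along any edge $\{x,y\}$ of the $\mu$-graph, the condition $0<|x-y|\le h(E_x,E_y)$ forces $h(E_x,E_y)>0$, hence at least one mark (both for $h=\min$) lies in $[c,\infty)$, so $h(E_x,E_y)$ is bounded below by a constant depending only on $c$ and $\g$, and your dilation argument applies with $\ell_*:=(1-\rho)$ times that constant. Finally, to come back from $\mu$-marks to $\nu$-marks, the paper writes $\nu$ as a convex combination of $\mu$ and another measure, so that the $\nu$-randomization of a PPP of intensity $\l_*$ contains, as an independent sub-process, a $\mu$-randomization of a PPP of intensity $\l_2:=\l_1\rho^{-d}$, whence $\cG(h-\ell_*,\l_*)$ contains (in law) the percolating graph built from $\mu$ and therefore itself percolates. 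You would need this, or an equivalent argument, to close the gap in (A2).
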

We postpone the proof of the above corollary to Section \ref{bin_MA}.
\rui{Note  that  we recover the Poisson Boolean model \cite{MR}  when $h(a,b):=(a+b)^\g$ and   $\g=1$. We point  out that, according to the notation in \cite{GHMM},  the above graph  $\cG(h,\l)$  corresponds to the min-kernel (or max-kernel) weight--dependent random connection model by taking  $h(a,b)=\max(a,b)$ (respectively  $h(a,b)=\min(a,b)$) and by defining  the weight of the point $x$  as a suitable rescaling of our $E_x$.}

\smallskip

Let now $d\geq 2$ and $\z,\l>0$. 
Consider  the MA random resistor network $\cG=\cG(h,\l)$ with parameter $\z$ (i.e. with structural function $h$ given by \eqref{hMA}), built on the  $\nu$--randomization of a  PPP on $\bbR^d$  with \rui{intensity} $\l $.  \rui{When $\nu$ has bounded support it is trivial to check} that there exists a critical length  $\z_c$ \rui{in $(0,+\infty)$}  such that 
\begin{equation}\label{selva}
P \bigl( \cG  \text{ percolates})= 
\begin{cases}
1 & \text{ if } \z>\z_c \,,\\
0 &\text{ if } \z < \z_c\,. 
\end{cases}
\end{equation}
 \rui{Indeed, if $\nu$ has support in $[-A,A]$, then $\cG$ contains (is contained in) a random graph distributed as a Poisson Boolean model with intensity $\l$ and deterministic radius $\z/2-2A$  ($\z/2$ respectively)}.  
Equivalently, one could keep $\z$ fixed and play with the \rui{intensity} $\l$ getting a phase transition as in \eqref{selva0}  \rui{under suitable assumptions (see \cite[Prop.~2.2]{FMim1})}. For physical reasons it is more natural to vary $\z$ while keeping $\l$ fixed.
Then we have:
\begin{Corollary} \label{cor2}  Let $d\geq 2$ and $\l>0$. Consider the MA  random resistor network with parameter $\z>\z_c$  built on the $\nu$--randomization of a  PPP on $\bbR^d$  with \rui{intensity}  $\l$. 
Suppose that $\nu$ has  bounded support contained in $[0,+\infty)$ or in $(-\infty,0]$.  Then there exist 
positive constants $c, c'$ such that  \eqref{stimetta} is satisfied for $L$ large enough.
\end{Corollary}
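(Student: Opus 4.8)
To prove Corollary \ref{cor2} the plan is to verify Assumptions (A1)--(A5) of Theorem \ref{teo1} for the MA random resistor network with parameter $\z>\z_c$ and then to apply that theorem. The starting observation is that the structural function \eqref{hMA} takes a simple form on $\text{supp}(\nu)$: if $\text{supp}(\nu)\subset[0,+\infty)$ then $|a|+|b|+|a-b|=2\max(a,b)$, while if $\text{supp}(\nu)\subset(-\infty,0]$ then $|a|+|b|+|a-b|=2\max(|a|,|b|)$, so in either case $h(a,b)=\z-2\max(|a|,|b|)$ for $a,b\in\text{supp}(\nu)$. Put $m:=\inf\{|a|:a\in\text{supp}(\nu)\}$, which is finite and attained since $\text{supp}(\nu)$ is closed and bounded. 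Then (A1) is exactly the hypothesis of the corollary; (A3) holds because $\sup\{h(a,b):a,b\in\text{supp}(\nu)\}=\z-2m$ is finite (bounded support) and strictly positive, the latter since $\z>\z_c$ forces $\z>2m$ --- indeed if $\z\le 2m$ then $h\le 0$ on $\text{supp}(\nu)\times\text{supp}(\nu)$, so $\cG$ has no edges and cannot percolate, contradicting \eqref{selva}; and (A5) holds because $(a,b)\mapsto\z-2\max(a,b)$ is weakly decreasing in each entry when $\text{supp}(\nu)\subset[0,+\infty)$ (so $h\searrow$), whereas $(a,b)\mapsto\z-2\max(-a,-b)$ is weakly increasing in each entry when $\text{supp}(\nu)\subset(-\infty,0]$ (so $h\nearrow$).

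For (A4), fix $\d>0$ and set $U_*(\d):=\{a\in\bbR:|a|\le m+\d/2\}$. Since $\text{supp}(\nu)$ lies on one side of the origin, $U_*(\d)$ contains a one-sided neighbourhood of the point of $\text{supp}(\nu)$ of least absolute value, and such a neighbourhood has positive $\nu$--measure by definition of the support; hence $\nu(U_*(\d))>0$. Moreover $\sup_{a'\in\text{supp}(\nu)}h(a',b)=\z-2\max(m,|b|)$ for each $b\in\text{supp}(\nu)$, and for $a\in U_*(\d)$ one has $h(a,b)=\z-2\max(|a|,|b|)\ge\z-2\max(m+\d/2,|b|)\ge\z-2\max(m,|b|)-\d$; taking the infimum over $a\in U_*(\d)$ gives \eqref{indigestione}.

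The substantial point, and the step I expect to be the main obstacle, is (A2): we must produce $\l_*<\l$ and $\ell_*>0$ with $\cG(h-\ell_*,\l_*)$ percolating a.s., where $h-\ell_*$ is again the structural function \eqref{hMA}, now with parameter $\z-\ell_*$. Lowering the network parameter is harmless: by \eqref{selva} we may fix $\z_0\in(\z_c,\z)$, and then the MA network of parameter $\z_0$ and intensity $\l$ (built on the $\nu$-randomization of the PPP of intensity $\l$) percolates a.s. The genuinely new difficulty is that (A2) also demands a \emph{strict} decrease of the intensity, and the plan for this is a scaling argument. For $t>1$, dilate the marked PPP by $x\mapsto tx$ while rescaling each mark to $tE_x$: the positions then form a homogeneous PPP of intensity $\l/t^d$, the mark law becomes its pushforward $\nu_t$ under $a\mapsto ta$, and the edge condition $|x-y|\le\z_0-2\max(|E_x|,|E_y|)$ is carried exactly onto $|tx-ty|\le t\z_0-2\max(|tE_x|,|tE_y|)$. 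Hence the MA network of parameter $\z_0$, intensity $\l$ and mark law $\nu$ is isomorphic to, and so percolates a.s.\ together with, the MA network of parameter $t\z_0$, intensity $\l/t^d$ and mark law $\nu_t$. Because $\text{supp}(\nu)$ lies on one side of the origin, the dilation by $t>1$ moves the marks in the direction that deletes edges --- upward when $h\searrow$, downward when $h\nearrow$ --- so by the monotone coupling of $\nu_t$ with $\nu$ together with (A5), the MA network with mark law $\nu_t$ is, under this coupling, a subgraph on the same vertex set of the MA network with mark law $\nu$ at parameter $t\z_0$ and intensity $\l/t^d$; the latter therefore percolates a.s.\ as well. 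Choosing $t\in(1,\z/\z_0)$ and putting $\ell_*:=\z-t\z_0>0$ and $\l_*:=\l/t^d<\l$, the graph $\cG(h-\ell_*,\l_*)$ is precisely the MA network of parameter $\z-\ell_*=t\z_0$ and intensity $\l/t^d$, which percolates a.s.; this is (A2). With (A1)--(A5) in hand, Theorem \ref{teo1} yields \eqref{stimetta} for $L$ large, proving Corollary \ref{cor2}.
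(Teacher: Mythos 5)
Your proof is correct and follows the same overall strategy as the paper: verify (A1)--(A5) and invoke Theorem~\ref{teo1}. The only noteworthy variant is in (A2). The paper dilates space alone (by $x\mapsto x/\g$, $\g\in(0,1)$) \emph{without} touching the marks, then observes that because $|E_x|+|E_y|+|E_x-E_y|\ge 0$ the factor $1/\g>1$ on the mark term can only shrink the r.h.s., so dropping it only adds edges; the inclusion $\cG(h-\ell,\l)\subset\hat\cG$ follows at once and no mark coupling is needed. You instead dilate both space and marks to get an exact isomorphism with an MA network at the rescaled mark law $\nu_t$, and then compare $\nu_t$ with $\nu$ by a monotone coupling using (A5) and the one--sidedness of the support. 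Both routes hinge on the same two facts -- the mark terms do not scale with space, and monotonicity in the marks -- and lead to the same choice $\ell_*=\z-t\z_0$, $\l_*=\l/t^d$; yours is slightly longer but equally valid, and the paper's sidesteps the mark-law comparison. (Your explicit construction of $U_*(\d)$ for (A4) is also fine; the paper simply cites continuity plus compactness of the support.)
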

We postpone the proof of the above corollary to Section \ref{bin_MA}. We point out that 
given $a,b \in \bbR$, it holds
\begin{equation}\label{semplice}
|a|+|b|+|a-b| =
\begin{cases}
2 \max \bigl( |a|, |b|\bigr) & \text{ if } a  b \geq 0 \,,\\
2|a-b| & \text{ if } a  b < 0 \,.
\end{cases}
\end{equation}
Hence, the structural function $h$ defined by \eqref{hMA} reads $h(a,b)= \z-2 \max(|a|,|b|)$ if  $a b\geq 0$, and $h(a,b)=\z- 2|a-b|$ if $ab <0$.  As a consequence, if the support of $\nu$ intersects both $(-\infty,0)$ and $(0,\infty)$, then Assumption (A5) fails.


\subsection{Outline of the paper}\label{pigolano}
The proof of Theorem \ref{teo1} consists of two main steps: 1) reduction to the analysis of  the LR crossings inside a 2d slice of  a suitable  graph  approximating $\cG$ and having  vertexes in a lattice, 2) combination of Tanemura's algorithm   in \cite[Section 4.1]{T} and   Grimmett-Marstrand renormalization  scheme  in \cite{GM} to perform the above reduced analysis.

For what concerns the first part, 
 in Section \ref{sec_discreto} we show that it is enough to prove the analogous of Theorem \ref{teo1} for a suitable graph $\bbG_+$ whose vertexes lie inside  $\e \bbZ^d$, with  $\e>0$ small enough. By a  standard argument (cf. \cite[Remark (d)]{GM}), we then show that it is indeed  enough
to  have a good control from below  of the number of vertex--disjoint LR crossings of $\bbG_+$ contained in a 2d slice (this control is analogous  to \eqref{stimetta} for $d=2$, cf. Eq. \eqref{problema2d}). 

We then move to the second part.
In Section \ref{sec_japan} we recall (with some extension) Tanemura's algorithm in \cite{T} to exhibit a maximal set of vertex-disjoint LR crossings for a generic  subgraph of $\bbZ^2$, where edges are given by pairs of nearest--neighbor points.  Under suitable conditions on the random subgraph of $\bbZ^2$, this algorithm allows to stochastically dominate  the maximal number of the  vertex-disjoint LR crossings by the analogous quantity for a site percolation. If the latter is supercritical, then one gets an estimate for the random subgraph of $\bbZ^2$ as \eqref{problema2d}. 
We then apply Tanemura's results by taking as random  subgraph of $\bbZ^2$ a suitable graph   built from $\bbG_+$ by a renormalization procedure similar to  the one developed by  Grimmett \& Mastrand in \cite{GM}. The combination of the two methods  to  conclude the proof of the bound \eqref{problema2d}, and hence of Theorem \ref{teo1}, is provided in Section \ref{sec_ginepro}, where the renormalization scheme and its main properties are only  roughly described. 
A detailed treatment of the renormalization scheme is given in Sections \ref{sec_RN_tools} and \ref{moto_GP}.

For the reader's convenience, in order to allow a better comprehension of the main structure of the proof of Theorem \ref{teo1}, we have postponed several technical proofs  in  the last sections \kui{(see Sections \ref{sio5}, \ref{trieste65}, \ref{patroclo}, \ref{puffo1}). Corollaries \ref{cor1} and \ref{cor2} are proven in Section \ref{bin_MA}.}

Finally, in   Appendix \ref{app_tanemura} we illustrate Tanemura's algorithm in a specific example and in Appendix \ref{app_locus} we collect some straightforward but cumbersome geometric bounds used in the renormalization scheme. In Appendix \ref{app_ultimatum}  we gather some technical arguments for the proof of \eqref{problema2d}.  
\section{Discretization and reduction to 2d slices}\label{sec_discreto}
\begin{Warning}\label{aaah} Without loss of generality  we  take $\D={\rm supp}(\nu)$ and assume that  $ \sup _{a,b\in \D} h(a,b)= 1$ (cf. (A3)). In particular, a.s. the length of the edges of $\cG$ will be  bounded by  $1$. 

\end{Warning}

In this section we show how to reduce the problem of estimating the probability $P \left( \rui{\cR_L(\cG)} \geq c L^{d-1} \right) $ in \eqref{stimetta} to a similar problem for a graph  $\bbG_+$
with vertexes contained in the rescaled lattice $\e \bbZ^d$. Afterwards, we show that, for  the second problem,  it is enough to  have
  a good control on the LR crossings of $\bbG_+$  contained in 2d slices.
  
  \smallskip
We need to introduce some notation since we will deal with several couplings:
\begin{itemize}
\item We write PPP($\rho$) for the Poisson point process with \rui{intensity} $\rho$.
\item We write PPP($\rho,\nu$) for the marked PPP obtained as $\nu$--randomization of a PPP($\rho$).
\item  Given a sequence of   i.i.d. random variables  $(X_n)_{n\geq 1} $  with law $\nu$ and, independently,   a Poisson random variable $N$ with parameter $\rho$, we 
 write $\cL(\rho,\nu)$ for the law of   $\inf \{X_1,X_2, \dots, X_N\}$ 
 if $h\searrow $ and  the law of $\sup \{X_1,X_2, \dots, X_N\}$ if $h\nearrow $.
  When $N=0$, the set $ \{X_1,X_2, \dots, X_N\}$ is given by $\emptyset$ and we use the convention that  $\inf\emptyset:=+\infty$ and $\sup \emptyset := -\infty$.
 \end{itemize}

Recall the constants $\l_*, \ell_*$  appearing in Assumption (A2)  and the set $U_*(\d)$ appearing in Assumption (A4).
\begin{Definition}[Parameters $\a,\e, K $, set $U_*$ and  boxes $R_z$'s] \label{vinello}
We fix  a constant    $\a>0$ small enough such that $10 \a\leq \ell_* $, $10 \a\leq 1$   and   $\sqrt{d}/\a\in \bbN_+$. We define $\e$ by 
$\e \sqrt{d}:= \a/100$ (note that $1/\e\in \bbN_+$). For each $z\in \ezd$ we set $R_z:= z+[0,\e)^d $. We fix a  positive integer $K$, very large. In Section \ref{francia}  we will explain  how to choose $K$. Finally, we set $U_*:= U_*(\a/2)$. 
\end{Definition}


\begin{Definition}[\rui{Fields $(A_z)$, $(T^{(j)}_z)$ and \kui{$(A_z^{\rm au})$}}] \label{cavallo} 
We introduce  the following  $K+1$  independent random  fields defined on a common probability space $(\Theta, \bbP)$: \begin{itemize}
\item Let $(A_z)_{z\in \ezd}$ be i.i.d. random variables with law  $\cL\bigl( \l_* \e^d , \nu\bigr)$.
\item Given $j\in \{1,2,\dots ,K\}$,
   let    $(T^{(j)}_z)_{z\in \ezd}$ be  i.i.d. random variables with law  $\cL\bigl( (\l-\l_*)\e^d/K  , \nu\bigr)$.
    \end{itemize}
   By means of  the above fields we  build an   \emph{augmented random field} given by the i.i.d. random variables $(A_z^{\rm au})_{z\in \ezd}$, where 
   \be\label{augmentin}
   A_z^{\rm au}:= 
   \begin{cases}
    A_z\wedge \min _{1\leq j \leq K} T_z^{(j)} & \text{ if } h \searrow\\
   A_z\lor  \max  _{1\leq j \leq K} T_z^{(j)} & \text{ if } h \nearrow
   \end{cases}
 \quad  \,.
   \en
      \end{Definition}
Note that $A_z$ and $A_z^{\rm au}$ have value in  $\D\cup \{+\infty\}$ if $h \searrow$, and  in $\D\cup \{-\infty\}$ if $h \nearrow$.
Let us  clarify  the relation of the  random fields introduced in Definition \ref{cavallo} with the PPP$(\l,\nu)$. We observe that 
 a PPP$(\l,\nu)$ can be obtained as follows.
 Let 
 \begin{align}
 & \{(x,E_x)\,:\, x\in \s\} \,, \label{mela71}\\
&  \{ (x,E_x)\,:\, x\in  \xi^{(j)} \} \qquad j=1,2,\dots, K\,, \label{mela72}
 \end{align} be independent marked PPP's, respectively with law  PPP$(\l_*,\nu)$ and PPP$( (\l-\l_*)/K,\nu)$.  \rui{In particular, 
 the random sets $\s$ and $\xi^{(j)}$, with $1\leq j \leq K$, are a.s. disjoint and correspond to PPP's with intensity $\l_*$ and $ (\l-\l_*)/K$ respectively.}
 The number of points in $\s\cap R_z$ (respectively  $\xi^{(j)}\cap R_z $) is a Poisson random variable with parameter $\l_* \e^d$ (respectively  $(\l-\l_*)\e^d /K$).
 Then,  setting $\xi := \s \cup \bigl( \cup _{j=1}^K  \xi^{(j)} \bigr)$,  the marked point process $\{(x,E_x)\,:\, x\in \xi\} $ is a PPP$(\l, \nu)$.
Let us first suppose that   $h\searrow$. We define
\begin{align}
 B_z &:= \inf\{ E_x\,:\, x \in \s \cap  R_z\}\,, \qquad \;\;\;z \in  \e\bbZ^d\,,\label{bibo}\\
  B_z^{(j)}&:= \inf \{ E_x\,:\, x \in \xi^{(j)} \cap  R_z\}\,, \qquad z \in  \e\bbZ^d\,,\;j=1,2,\dots,K\,.\label{biboj}
 \end{align}
 Trivially, we have 
\be\label{lince}
B_z^{\rm au}:=B_z\wedge  \min _{1\leq j \leq K} B_z^{(j)} =  \inf\{ E_x\,:\, x \in \xi \cap  R_z\}\,, \qquad z \in  \e\bbZ^d\,.
\en
The above fields in \eqref{bibo} and \eqref{biboj} are independent  (also varying $j$) and moreover we have the following identities between laws:
\begin{align}
& \left(B_z\right)_{z\in \e \bbZ^d} \stackrel{\cL}{=} \left(A_z\right)_{z\in \e \bbZ^d}\,, \\
& \left(B^{(j)} _z\right)_{z\in \e \bbZ^d} \stackrel{\cL}{=} \left (T^{(j)}_z\right)_{z\in \e \bbZ^d} \text{  for } j=1,2,\dots, K\,,\\
& \left(B_z^{\rm au}\right)_{z\in \e \bbZ^d} \stackrel{\cL}{=} \left(A_z^{\rm au}\right)_{z\in \e \bbZ^d}\,.
\end{align}
When $h \nearrow $ the above observations remain valid by replacing $\inf, \min$ with $\sup, \max$, respectively.

%
%

 \begin{Definition}[\rui{Graph $\bbG_+=(\bbV_+,\bbE_+) $}] \label{vichinghi_+}
 On the probability space $(\Theta,\bbP)$ we define the graph  
 $\bbG_+=(\bbV_+,\bbE_+) $  as
   \begin{align}
      \bbV_+&:=\{z\in \ezd\,:\, A^{\rm au}_z \in \bbR \} \,,\\
  \bbE_+& :=\left \{ \{z,z'\}: z\not = z' \text{ in } \bbV_+\,, \;\;\; |z-z'| \leq h(A^{\rm au}_z,A^{\rm au}_{z'}) -\a \right\}\,.
  \end{align}
   \end{Definition}
The plus suffix comes from the fact that in Sections \ref{scremato} and \ref{sec_RN_tools}  we will introduce  two other  graphs, $\bbG_-,$ and $\bbG$  respectively, such that 
$\bbG_-\subset \bbG\subset \bbG_+$. 
%

 
\begin{Definition}[\rui{LR crossing in $\bbG_+$ and $\bbR_L(\bbG_+)$}] \label{def_LR_bis}
Given  $L>0$, a left-right (LR) crossing  of  the box $\D_L:=[-L-2,L+2] \times [-L,L]^{d-1}$ in the graph $\bbG_+$ is any  sequence of distinct vertexes  $x_1,x_2 , \dots, x_n $ of $\bbG_+$ 
such that 
\begin{itemize}
\item $\{x_i ,x_{i+1}\} \in  \bbE_+$ for all $i=1,2,\dots, n-1$;
\item $x_1 \in (-\infty,-L-2)\times [-L,L]^{d-1} $;
\item $x_2, x_3,\dots, x_{n-1}\in \D_L$;
\item  $x_n \in (L+2, +\infty )\times [-L,L]^{d-1} $.
\end{itemize}
  We also define $\bbR_L(\bbG_+)$ as  the  maximal number of  vertex-disjoint LR crossings of $\D_L$ in $\bbG_+$.
\end{Definition}

 \begin{TheoremA}\label{teo2} Let $\bbG_+$ be the random graph given in Definition \ref{vichinghi_+}. Then
  there exist 
positive constants $c, c'$ such that 
\begin{equation}\label{fiorfiore}
\bbP \left( \bbR_L(\bbG_+)\geq c L^{d-1} \right) \geq 1- e^{- c' L^{d-1}} 
\end{equation}
for $L$ large enough.
\end{TheoremA}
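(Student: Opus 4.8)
The plan is to prove \eqref{fiorfiore} in two stages. In the first stage I invoke the reduction already announced after Theorem~\ref{teo1}: by Proposition~\ref{carletto} it is enough to establish the two--dimensional crossing bound \eqref{problema2d} for a thin slab of $\bbG_+$. The mechanism behind this reduction is the classical slicing argument of Grimmett and Marstrand (cf.\ \cite[Remark~(d)]{GM}). Fix the thickness $\ell$ (large, to be pinned down in the second stage) and, for $k$ ranging over a suitably sparse sublattice of $\bbZ^{d-2}$, consider the slabs $S_k\subset\ezd$ obtained by freezing the last $d-2$ coordinates to a box of side of order $\ell$ centered at $\ell k$, while letting the first two coordinates roam in $\bbR\times[-L,L]$. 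There are of order $L^{d-2}$ such slabs; vertex-disjoint LR crossings lying in distinct $S_k$'s are automatically vertex-disjoint in $\bbG_+$, and, since $A^{\rm au}_z$ is a function of the marked Poisson points in the single cell $R_z$, the crossing counts of slabs with disjoint transverse ranges are independent (were (A5) to force us to compare overlapping randomness, we would invoke the FKG inequality of Section~\ref{scremato} instead, the relevant events being increasing). Granting that each individual slab carries $\geq cL$ vertex-disjoint LR crossings off an event of probability $\leq e^{-c'L}$ --- this is exactly \eqref{problema2d} --- the number of ``bad'' slabs is stochastically dominated by a $\mathrm{Bin}\bigl(L^{d-2},e^{-c'L}\bigr)$ variable, so a Chernoff bound shows that at least half the slabs are ``good'' off an event of probability $\leq e^{-c''L^{d-1}}$; summing the crossings over the good slabs yields $\bbR_L(\bbG_+)\geq\tfrac c2 L^{d-1}$, which is \eqref{fiorfiore}.

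It remains to prove \eqref{problema2d}, i.e.\ to exhibit $\geq cL$ vertex-disjoint LR crossings inside one slab. Here I would coarse-grain the slab of $\bbG_+$ into a renormalized copy of $\bbZ^2$ and then run Tanemura's algorithm on it. Concretely: tile the slab by boxes of a large but fixed side $M$, index these boxes by $\bbZ^2$, and declare a renormalized site ``good'' when its $M$-box contains a cluster of $\bbG_+$ that crosses it in both slab-directions and links up with the corresponding clusters in the four neighbouring boxes. The crucial point is that the supercriticality Assumption~(A2), processed through a Grimmett--Marstrand-type renormalization, forces the field of good renormalized sites to stochastically dominate an i.i.d.\ site percolation on $\bbZ^2$ of density as close to $1$ as we please --- it is within this step that the slab thickness $\ell$ of the first stage, and the large integer $K$ of Definition~\ref{vinello}, get fixed. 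Taking $M$, $\ell$ and $K$ large enough puts that density above $p^{\rm site}_c(\bbZ^2)$, and feeding the resulting supercritical renormalized percolation into Tanemura's algorithm (Section~\ref{sec_japan}) produces $\geq c' L/M$ vertex-disjoint LR crossings of the renormalized box off an event of probability $\leq e^{-c''L/M}$; lifting each renormalized crossing to a genuine path of $\bbG_+$ inside the slab, and recalling that $M$ is a constant, this gives \eqref{problema2d}, hence \eqref{fiorfiore}.

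The main obstacle is precisely the renormalization step, namely showing that the good renormalized sites have density near $1$ with a stretched-exponential tail. The original Grimmett--Marstrand growth (their Lemma~6) is not available verbatim because $\bbG_+$ is genuinely spatially correlated --- the weight $A^{\rm au}_z$ couples all sites within distance $1$ --- so one cannot simply grow the crossing cluster one box at a time while revealing only a thin interface. The way around this, following the novelty announced in the Introduction, is to exploit the splitting of PPP$(\l,\nu)$ provided by Definition~\ref{cavallo}: the $A_z$-field alone generates a subgraph of $\bbG_+$ (replace $A^{\rm au}_z$ by $A_z$ in Definition~\ref{vichinghi_+}; by \eqref{augmentin} together with the monotonicity in (A5) this subgraph is indeed contained in $\bbG_+$). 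I would perform a Grimmett--Marstrand-type, scale-by-scale growth using only this field, producing an increasing family of \emph{quasi-clusters} --- sets that are connected except at a controlled number of localized ``cuts'', a cut arising wherever the deterministic slack $-\a$ together with the correlations prevents a genuine edge. Then, reserving one fresh sprinkling field $T^{(j)}$ per renormalization scale (this is why there are $K$ of them and why $K$ is taken very large), I would show that with high probability every cut is ``repaired'': a point of the $j$-th sprinkling PPP lands where it is needed and, through \eqref{augmentin}, creates the missing edge of $\bbG_+$, so that the final quasi-cluster becomes genuinely connected in $\bbG_+$. Quantifying the number and location of the cuts, and bounding the repair-failure probabilities uniformly over the (randomly grown) quasi-cluster, is the technical heart of the argument, carried out in Sections~\ref{sec_RN_tools} and \ref{moto_GP}; the remaining ingredients --- the elementary geometric estimates of Appendix~\ref{app_locus}, the FKG inequality, Tanemura's algorithm, and Proposition~\ref{carletto} --- are comparatively routine.
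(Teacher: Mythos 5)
Your proposal follows essentially the same route as the paper: reduce to the $d=2$ slab estimate \eqref{problema2d} via Proposition~\ref{carletto}, then obtain \eqref{problema2d} by running Tanemura's exploration algorithm on a coarse-grained copy of $\bbZ^2$, with the conditional success probability of each step controlled by a Grimmett--Marstrand-type growth of quasi-clusters built from the $A_z$-field and repaired by the fresh sprinkling fields $T^{(j)}$ (which is exactly the role of Lemma~\ref{pierpilori}). The only slight imprecision is the suggestion that one first proves a static Bernoulli domination for ``good'' renormalized sites and then feeds this into Tanemura's algorithm: in the paper the domination is obtained \emph{via} Tanemura's sequential exploration (Assumption~(A) of Section~\ref{sec_japan}), since occupancy and linkage are defined dynamically relative to the clusters already grown; this does not affect the correctness of the plan.
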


To get Theorem \ref{teo1} it is enough to prove  Theorem \ref{teo2}:
\begin{Proposition}\label{fragolino}  Theorem \ref{teo2} implies Theorem \ref{teo1}.
\end{Proposition}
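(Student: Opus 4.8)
The plan is to realise a copy of $\bbG_+$ and the continuum graph $\cG=\cG(h,\l)$ on one probability space, via the superposition coupling already prepared in Section \ref{sec_discreto}, and then to show \emph{pointwise on that space} that $\bbR_L(\bbG_+)\le\cR_{L+1}(\cG)$; combined with \eqref{fiorfiore} this gives $P(\cR_{L+1}(\cG)\ge cL^{d-1})\ge1-e^{-c'L^{d-1}}$, and setting $M=L+1$ and using $(M-1)^{d-1}\ge2^{-(d-1)}M^{d-1}$ for $M\ge2$ converts this into \eqref{stimetta} with $c,c'$ replaced by $c\,2^{-(d-1)},\,c'\,2^{-(d-1)}$. (Any bound of the form $\bbR_L(\bbG_+)\le\cR_{L+O(1)}(\cG)$ would do equally well.)

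For the coupling, I would realise the marked $\mathrm{PPP}(\l,\nu)$ as the superposition $\xi=\s\cup\bigcup_{j=1}^K\xi^{(j)}$ of the independent marked processes in \eqref{mela71}--\eqref{mela72}, build $\cG$ on $\{(x,E_x):x\in\xi\}$ according to \eqref{pico}, and build a random graph $\bbG_+'$ out of the fields $B_z,B_z^{(j)},B_z^{\rm au}$ of \eqref{bibo}--\eqref{lince} in place of $A_z,T_z^{(j)},A_z^{\rm au}$; since these have the same joint law (Section \ref{sec_discreto}), $\bbG_+'$ is distributed as $\bbG_+$, and on this joint space $B_z^{\rm au}=A_z^{\rm au}$ for every $z$. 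For each $z\in\bbV_+$ — equivalently $\xi\cap R_z\neq\emptyset$ — I fix, with an arbitrary measurable tie-break, a point $x_z\in\xi\cap R_z$ realising the extremal mark (the infimum of $E_\cdot$ over $\xi\cap R_z$ if $h\searrow$, the supremum if $h\nearrow$; attained since $\xi\cap R_z$ is a.s.\ finite), so that $E_{x_z}=B_z^{\rm au}=A_z^{\rm au}$. Because the cells $R_z$, $z\in\e\bbZ^d$, are pairwise disjoint, $z\mapsto x_z$ is injective on $\bbV_+$.

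The key deterministic fact is that $\{z,z'\}\in\bbE_+$ implies $\{x_z,x_{z'}\}\in\cE$: since $|x_z-z|\le\e\sqrt d$ and $|x_{z'}-z'|\le\e\sqrt d$,
\[
|x_z-x_{z'}|\le|z-z'|+2\e\sqrt d\le h(A_z^{\rm au},A_{z'}^{\rm au})-\a+\tfrac{\a}{50}<h(A_z^{\rm au},A_{z'}^{\rm au})=h(E_{x_z},E_{x_{z'}})\,,
\]
using $2\e\sqrt d=\a/50<\a$ (Definition \ref{vinello}) and the defining inequality of $\bbE_+$. Hence an LR crossing $z_1,\dots,z_n$ of $\D_L$ in $\bbG_+'$ is carried by $z\mapsto x_z$ to a sequence $x_{z_1},\dots,x_{z_n}$ of distinct points of $\xi$ with $\{x_{z_i},x_{z_{i+1}}\}\in\cE$; moreover every $z_i$ has transverse coordinates in $[-L,L]^{d-1}$, so every $x_{z_i}$ has transverse coordinates in $[-L,L+\e)^{d-1}\subset[-(L+1),L+1]^{d-1}$, while $x_{z_1}$ has first coordinate $<-L-2+\e<-(L+1)$ and $x_{z_n}$ has first coordinate $>L+2>L+1$ (this is where the $\pm2$ margin in $\D_L$ and $\e<1$ enter). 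Letting $m$ be the first index with $(x_{z_m})_1>L+1$ and, among indices $\le m$, $\ell$ the last one with $(x_{z_\ell})_1<-(L+1)$, one has $\ell<m$ and $(x_{z_i})_1\in[-(L+1),L+1]$ for $\ell<i<m$, so $x_{z_\ell},\dots,x_{z_m}$ is an LR crossing of $[-(L+1),L+1]^d$ in $\cG$ in the sense of Definition \ref{def_LR}. By injectivity of $z\mapsto x_z$, a family of vertex-disjoint LR crossings of $\D_L$ in $\bbG_+'$ is sent to a family of the same cardinality of vertex-disjoint LR crossings of $[-(L+1),L+1]^d$ in $\cG$, whence $\bbR_L(\bbG_+')\le\cR_{L+1}(\cG)$. (For $h\nearrow$ the argument is verbatim with $\inf,\min$ replaced by $\sup,\max$.) Together with \eqref{fiorfiore} and the rescaling of the first paragraph, this proves \eqref{stimetta}.

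The routine parts here are the coupling (already set up in Section \ref{sec_discreto}) and the elementary geometric estimates. The one genuinely delicate point is the crossing transformation in the third paragraph: transporting a crossing of the slightly elongated box $\D_L$ for the lattice graph $\bbG_+$ into a crossing of an honest cube for the continuum graph $\cG$, absorbing the $O(\e)$ displacements created by the cells $R_z$ into the $\pm2$ margin of $\D_L$, and checking that the cropping at $\{$first coordinate $=\pm(L+1)\}$ produces genuine LR crossings while preserving vertex-disjointness.
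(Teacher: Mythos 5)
Your proof is correct and follows the same approach as the paper: realise $\bbG_+$ as the graph $\bar{\bbG}_+$ on the coupled space of the marked PPP, pick $x(z)\in\xi\cap R_z$ with $E_{x(z)}=B_z^{\rm au}$, transfer $\bbE_+$-edges to $\cE$-edges using the slack $\a-2\e\sqrt{d}>0$, and crop the image path to obtain $\cR_{L+1}(\cG)\ge\bbR_L(\bbG_+)$ pointwise. Your explicit first/last-index cropping is a cleaner way of making precise the ``extraction'' step that the paper's proof leaves to the reader; otherwise the two arguments are identical.
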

\begin{proof} We restrict to the case  $h\searrow $ as the case $h\nearrow $ can be treated by similar arguments.
By the above discussion concerning \eqref{bibo}, \eqref{biboj} and \eqref{lince},  
$\bbG_+$ has the same law of the following  graph $\bar \bbG_+$ built in terms of the
random field \eqref{lince}. The vertex set of $\bar \bbG_+$ is given by $\{z\in \ezd\,:\, B^{\rm au}_z <+\infty\}$. The edges of $\bar \bbG_+$ are given by the unordered pairs $\{z,z'\}$  with $z\not =z'$ in the vertex set  and 
  \be \label{castello}
  |z-z'| \leq h(B_z^{\rm au}, B_{z'}^{\rm au}) -\a\,.
  \en
  Due to \eqref{lince} for each 
 vertex $z$ of $\bar \bbG_+$ we can fix a point $x(z)\in \xi\cap R_z $ such that  $E_{x(z)}=  B^{\rm au}_z$. Hence, if $\{z,z'\}$ is an edge of $\bar \bbG_+$,  then $x(z)$ and $x(z')$ are defined 
 and it holds 
 $|z-z'|  \leq h( E_{x(z)}, E_{x(z')})-\a$. As $x(z) \in R_z$ it must be $|x(z) -z| \leq \sqrt{d}\e =\a/100$ and, similarly, $|x(z')-z'| \leq \a/100$. It then follows that $|x-y| \leq h (E_x, E_y)$ where $x=x(z)$ and $y=x(z')$. As $x\in R_z$, $y\in R_{z'}$ and $R_z\cap R_{z'}=\emptyset$, it must be $x\not =y$. Due to the above observations  $\{x,y\}$ is an edge of $\cG(h,\l)$.

 We extend Definition \ref{def_LR_bis} to $\bar \bbG_+$ (it is enough to replace $\bbG_+$ by $\bar \bbG_+$ there). Due to the above discussion, 
 if $z_1, z_2, \dots, z_n$  is a LR crossing of  the box $\D_L$ for $\bar \bbG_+$, then we can extract from $x(z_1), x(z_2), \dots, x(z_n)$  a  LR crossing  of the box $[-L-1,L+1]^d$ for $\cG(h,\l)$ (we use that  $x(z_1), x(z_2), \dots, x(z_n)$ itself is a path for $\cG(h,\l)$,   $|x(z_i)-z_i| \leq \a/100$, and 
edges of $\bar \bbG_+$ have length at most $1-\a$). Since disjointness is preserved, we deduce that 
 $R_{L+1}\bigl( \cG(h,\l)\bigr) \geq \bbR_L(\bar \bbG_+)$. Due to this  inequality  Theorem \ref{teo2} implies Theorem \ref{teo1} (by changing the constants $c,c'$ when moving from Theorem \ref{teo2} to Theorem \ref{teo1}).
\end{proof}

Finally,   we show that, to prove Theorem \ref{teo2}, it is enough to  have
  a good control on the LR crossings contained in 2d slices:

  \begin{Proposition}\label{carletto} Fixed   a positive integer $k$, we call $\rui{\bbR^*_L(\bbG_+)}$ 
   the maximal number of vertex-disjoint LR crossings of the box  $\D_L=[-L-2,L+2]\times [-L,L]^{d-1}$ for the graph  $\bbG_+$ whose vertexes, \rui{apart from} the first and last one,  are included in the slice 
   \be\label{rondine}
   [-L-2,L+2] \times [-L,L] \times [-k,k)^{d-2} \,,
   \en
   while the first and last one are included, respectively, in 
    $ (-\infty, -L-2) \times [-L,L] \times [-k,k)^{d-2}$ and $(L+2,+\infty) \times [-L,L] \times [-k,k)^{d-2} $.
   If  there exist positive constants $c_1,c_2$ such that
   \begin{equation}\label{problema2d}
  \bbP( \rui{\bbR^*_L(\bbG_+)} \geq c_1 L) \geq 1- e^{-c_2 L}
  \end{equation} 
    for  $L$ large enough,   
  then the claim of Theorem \ref{teo2} is fulfilled (i.e. $\exists c,c'>0$ such that \eqref{fiorfiore} is true for $L$ large enough).
  \end{Proposition}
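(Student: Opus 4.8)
The plan is to upgrade the two–dimensional estimate \eqref{problema2d} to the $(d-1)$–dimensional estimate \eqref{fiorfiore} by stacking disjoint translates of the slab appearing in \eqref{rondine}, exploiting that the graph $\bbG_+$ is a \emph{local} functional of the i.i.d.\ field $(A^{\rm au}_z)_{z\in\ezd}$. If $d=2$ there is nothing to do, since the slice in \eqref{rondine} is then all of $\D_L$ and $\bbR^*_L(\bbG_+)=\bbR_L(\bbG_+)$; so I would assume $d\ge 3$. The point I must keep in mind throughout is that a crude union bound over the stacked slabs would only produce an error of order $e^{-c'L}$, whereas \eqref{fiorfiore} demands $e^{-c'L^{d-1}}$; closing this gap is exactly what forces me to arrange the construction so that the slab contributions are genuinely \emph{independent}.

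\emph{Stacking and independence.} Since $1/\e,k\in\bbN_+$, translations by $2k\,w$ with $w\in\bbZ^{d-2}$ acting on the last $d-2$ coordinates preserve $\ezd$. Fixing $m:=\lfloor L/(2k)\rfloor$ and letting $w_1,\dots,w_N$ (with $N:=(2m-1)^{d-2}$) run over $\{-m+1,\dots,m-1\}^{d-2}$, I set $\Pi_i:=\{z\in\ezd:\ (z_3,\dots,z_d)\in 2k\,w_i+[-k,k)^{d-2}\}$ and let $S_i$ be the translate of the slab in \eqref{rondine} by $2k\,w_i$ in those coordinates. The half-open boxes $[-k,k)^{d-2}+2k\,\bbZ^{d-2}$ tile $\bbR^{d-2}$, so the $\Pi_i$ are pairwise disjoint, and for $L$ large one has $S_i\subset\D_L$ and $N\ge (L/(4k))^{d-2}$. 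Let $X_i$ be the maximal number of vertex-disjoint LR crossings of $\D_L$ in $\bbG_+$ all of whose vertices lie in $S_i$ (the first and last one being allowed in the extension of $S_i$ in the first coordinate, exactly as prescribed in Proposition \ref{carletto}). Every vertex of such a crossing lies in $\Pi_i$, and both membership in $\bbV_+$ and adjacency in $\bbE_+$ depend only on the field values at the two vertices involved; hence $X_i$ is measurable with respect to $(A^{\rm au}_z)_{z\in\Pi_i}$. Since $(A^{\rm au}_z)_{z\in\ezd}$ is i.i.d.\ and the $\Pi_i$ are pairwise disjoint, the $X_i$ are independent; and since $\Pi_i$ is a lattice translate of the index set underlying $\bbR^*_L(\bbG_+)$, stationarity gives that each $X_i$ has the same law as $\bbR^*_L(\bbG_+)$, so by \eqref{problema2d} one has $\bbP(X_i\ge c_1 L)\ge 1-e^{-c_2 L}$ for $L$ large, uniformly in $i$.

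\emph{Counting and large deviations.} Crossings living in distinct slabs $S_i$ are automatically vertex-disjoint (their vertex sets are disjoint) and each is an LR crossing of $\D_L$; hence $\bbR_L(\bbG_+)\ge c_1 L\,Z$ with $Z:=\#\{i:\ X_i\ge c_1 L\}$. Writing $Z=N-B$, the count $B$ is a sum of $N$ independent Bernoulli variables of parameter at most $e^{-c_2 L}$, so a union bound over the $\lceil N/2\rceil$-subsets gives $\bbP(B\ge N/2)\le\binom{N}{\lceil N/2\rceil}e^{-c_2 L\lceil N/2\rceil}\le 2^N e^{-c_2 L N/2}=\exp\!\bigl(-N(c_2L/2-\ln 2)\bigr)$, which for $L$ large is at most $e^{-c_2 L N/4}\le e^{-c'L^{d-1}}$ with $c':=c_2/(4(4k)^{d-2})$. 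On the complementary event $\{B<N/2\}$ one has $\bbR_L(\bbG_+)> c_1 L\,N/2\ge c L^{d-1}$ with $c:=c_1/(2(4k)^{d-2})$; this yields \eqref{fiorfiore} for $L$ large, and hence, via Proposition \ref{carletto}, Theorem \ref{teo2}.

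The only genuinely delicate point is the independence step: it is precisely the locality of $\bbG_+$ in the i.i.d.\ field $(A^{\rm au}_z)$, together with the perfect tiling of $\bbR^{d-2}$ by the half-open cubes $[-k,k)^{d-2}$ — so that disjoint slabs involve disjoint, hence independent, portions of the field — that turns the slab scores $X_i$ into i.i.d.\ variables and thereby replaces the naive exponent $O(L)$ of a union bound by the required $O(L^{d-1})$. Everything else is bookkeeping with the translation invariance of the field and with the geometry of the boxes.
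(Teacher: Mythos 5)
Your proof is correct and takes essentially the same approach as the paper: you tile $\D_L$ with $\asymp L^{d-2}$ disjoint translates of the slab, use the locality of $\bbG_+$ in the i.i.d.\ field $(A^{\rm au}_z)$ to make the slab counts independent, and then apply a binomial concentration bound. The only cosmetic difference is that the paper bounds $\bbP(Y<n/2)$ via a Markov/Chernoff argument with $\delta^Y$, whereas you use a union bound over $\lceil N/2\rceil$-subsets; the two yield the same exponential estimate.
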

  \begin{proof}
  For each $z\in 2k \bbZ^{d-2}$ we consider the slice 
 \[ S(z):=[-L-2,L+2] \times [-L,L] \times \left ( z+ [-k,k)^{d-2} \right)  \,.
 \] Note that, when varying $z$ in $2k \bbZ^{d-2}$,  the above slices are disjoint and that 
 $\D_L$ contains at least  $\lfloor 2 L/ 2k
\rfloor^{d-2}\asymp c_0 L^{d-2}$  slices of the above form.

Let us assume \eqref{problema2d}.
By translation invariance and independence of the random variables \rui{$A^{\rm au}_z$ with $z\in \ezd$ appearing in  \eqref{augmentin}}, the number  \rui{$X$} of disjoint slices $S(z) \subset \D_L$ \rui{with $z\in 2k \bbZ^{d-2}$}  including  at least  $c_1 L$ vertex-disjoint LR crossings of $\D_L$  for  $\bbG_+$  stochastically dominates a binomial random variable $Y$ with parameters $ n\asymp  c_0  L^{d-2}$ and $p:=1- e^{-c_2 L}$ (at cost  to enlarge the probability space $(\Theta,\bbP)$ we can think $Y$ as defined on $\Theta$). Setting $\d := e^{-c_2 L}$ we get
\begin{multline*}
 \bbP( \rui{X}< n/2)  \leq 
\bbP( Y < n/2)= \bbP( \d ^{Y} > \d^{\frac{n}{2}})\leq \d^{-\frac{n}{2}} \bbE\bigl[ \d^{Y}\bigr]  = 
\d^{-\frac{n}{2}} [ \d p + 1-p] ^n \\= \d^{-\frac{n}{2}} [ \d- \d^2 +\d] ^n\leq \d^{-\frac{n}{2}} [ 2\d ] ^n= 2^{c_0(1+o(1)) L^{d-2}} e^{- c_0 c_2 (1+o(1))L^{d-1}/2} \,.
\end{multline*}
\rui{As the event $X\geq n/2$ implies that $\bbR(\bbG_+)\geq c_1 L n/2 \asymp  (c_0 c_1 /2) L^{d-1}$, we get} \eqref{fiorfiore} in Theorem \ref{teo2}.
\end{proof}

\subsection{Properties of $\bbG_+$} \label{scremato}
In this subsection we want to isolate the properties of $\bbG_+$ that follow from the main assumptions and that will be crucial to prove Theorem \ref{teo2}.

 \begin{Definition}[\rui{Graph  $\bbG_-= (\bbV_-, \bbE_-)$}] \label{vichinghi_-}
 On the probability space $(\Theta,\bbP)$ we define the graph $\bbG_-= (\bbV_-, \bbE_-)$
 as 
  \begin{align}
   & \bbV_-:=\{z\in \ezd\,:\, A_z \in \bbR \} \,,\\
  & \bbE_- :=\left \{ \{z,z'\}: z\not= z' \text{ in } \bbV_-\,, \; |z-z'| \leq h(A_z,A_{z'}) -3\a \right\}\,\,.  \end{align}
   \end{Definition}

  \begin{Lemma}\label{john} 
The graph   $\bbG_-$  percolates $\bbP$--a.s.
 \end{Lemma}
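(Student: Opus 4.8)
The plan is to deduce percolation of $\bbG_-$ from percolation of the continuum graph $\cG(h-\ell_*,\l_*)$ — guaranteed $\bbP$--a.s.\ by Assumption (A2) — via the box--discretization map sending $x\in\bbR^d$ to the unique $z(x)\in\ezd$ with $x\in R_{z(x)}$. To make the comparison exact I would first pass to the coupled representation described around \eqref{bibo}--\eqref{lince}: assuming $h\searrow$, let $\s$ be the PPP$(\l_*)$ of \eqref{mela71} marked by i.i.d.\ $\nu$--variables $(E_x)$, set $B_z:=\inf\{E_x:x\in\s\cap R_z\}$, and let $\bar\bbG_-$ be the graph built as in Definition \ref{vichinghi_-} but with $B_z$ in place of $A_z$. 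Since $(B_z)_{z\in\ezd}\stackrel{\cL}{=}(A_z)_{z\in\ezd}$ we have $\bar\bbG_-\stackrel{\cL}{=}\bbG_-$, so it suffices to prove that $\bar\bbG_-$ percolates a.s. (the case $h\nearrow$ is identical after replacing $\inf$ and $\le$ by $\sup$ and $\ge$ everywhere below).

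By (A2), the graph $\cG(h-\ell_*,\l_*)$ built on $\{(x,E_x):x\in\s\}$ has a.s.\ an infinite connected component $\cC$. For $x\in\s$ the set $\s\cap R_{z(x)}$ is finite and nonempty, so its infimum is attained: $B_{z(x)}=E_{x'}$ for some $x'\in\s\cap R_{z(x)}$, hence $B_{z(x)}\in\D\subset\bbR$ and $z(x)$ is a vertex of $\bar\bbG_-$. If $x\ne y$ in $\s$ with $z(x)\ne z(y)$ are joined by an edge of $\cG(h-\ell_*,\l_*)$, i.e.\ $|x-y|\le h(E_x,E_y)-\ell_*$, then the monotonicity of $h$ (Assumption (A5)) gives $B_{z(x)}\le E_x$ and $B_{z(y)}\le E_y$, whence $h(B_{z(x)},B_{z(y)})\ge h(E_x,E_y)$; combining this with $|z(x)-x|,|y-z(y)|<\e\sqrt d=\a/100$ and $\ell_*\ge10\a$ (Definition \ref{vinello}),
\[
|z(x)-z(y)|\le |z(x)-x|+|x-y|+|y-z(y)|\le h(E_x,E_y)-\ell_*+\tfrac{\a}{50}\le h(B_{z(x)},B_{z(y)})-3\a ,
\]
so $\{z(x),z(y)\}$ is an edge of $\bar\bbG_-$.

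To conclude I would note that along any path in $\cC$ the images under $z(\cdot)$ form a walk in $\bar\bbG_-$ (consecutive images are equal or, by the previous paragraph, adjacent), so the whole set $z(\cC)$ lies in a single connected component of $\bar\bbG_-$. Since $\s$ is locally finite, the infinite set $\cC$ is unbounded, hence meets infinitely many of the bounded cells $R_z$; therefore $z(\cC)$ is infinite and $\bar\bbG_-$ percolates a.s., which is the claim. The argument is essentially routine: the only point requiring care — and the place where the specific choice of constants in Definition \ref{vinello} is used — is that the slack $\ell_*-3\a>0$ in the edge relation of $\bbG_-$ must absorb the two discretization errors $|z(x)-x|$ and $|y-z(y)|$, each of size $<\e\sqrt d=\a/100$; everything else reduces to the monotonicity hypothesis (A5) and the trivial fact that projecting a continuum cluster onto the lattice cannot disconnect it.
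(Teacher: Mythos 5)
Your proof is correct and follows essentially the same route as the paper's: pass to the coupled representation $\bar\bbG_-$ built from the field $B_z=\inf\{E_x:x\in\s\cap R_z\}$, then show that the map $x\mapsto z(x)$ sends each edge of the percolating graph $\cG(h-\ell_*,\l_*)$ to either a repeated vertex or an edge of $\bar\bbG_-$, using Assumption (A5) together with the budget $\ell_*\ge10\a$ to absorb the two discretization errors of size $\a/100$ each. The only difference is that you spell out a couple of details the paper leaves implicit (that $B_{z(x)}\in\D$ so $z(x)$ is a vertex, and that the image of the unbounded cluster is infinite), which is harmless.
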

 \begin{proof}  We restrict to the case  $h\searrow $ as the case $h\nearrow $ can be treated similarly.
 By the discussion following Definition \ref{cavallo} (recall the notation there)
 it is enough to prove that the graph $\bar{\bbG}_-$ percolates a.s., where 
 $\bar{\bbG}_-$ is defined as $\bbG_-$  with $A_z$ replaced by $B_z$.
 
  Let  $x\not =y$ be points of $ \s$ such that 
\be\label{risorgive} |x-y| \leq h(E_x, E_y) -\ell_* \,.\en
Equivalently, $\{x,y\}$ is an edge of the graph $\cG(h-\ell_* , \l_*)$ built by means of the marked PPP  $\{ (x,E_x)\,:\, x\in \s\}$.
Let  $z(x)$ and $z(y)$ be the points in $\e \bbZ^d$ such that $x\in R_{z(x)} $ and $y\in R_{z(y)}$.  Trivially, $|z(x)-x|\leq \e\sqrt{d}$,  $|z(y) -y|\leq \e\sqrt{d}$, $B_{z(x)} \leq E_x$ and $B_{z(y)}\leq E_y$. Then,  from Assumption (A5),  \eqref{risorgive}  and  since $10 \a \leq \ell_*$ in  Definition \ref{vinello},
  we get
\[ | z(x)-z(y)| \leq |x-y|+ 2 \e \sqrt{d}\leq  h(E_x, E_y) -\ell_*+\a/50\leq  h(B_{z(x)} , B_{z(y)}) -3 \a \,.
\]
As a consequence, for each edge $\{x,y\}$ in  $\cG(h-\ell_*, \l_*)$, either we have $z(x)=z(y)$ or we have that $\{z(x), z(y)\}$ is an edge of   $\bar{\bbG}_-$.
  Since $\cG( h-\ell_*, \l_*) $ a.s. percolates by (A2), due to the above observation we conclude that  $\bar{\bbG}_-$  a.s.  percolates. 
 \end{proof}

We conclude this section by treating the FKG inequality. On the probability space $(\Theta, \bbP)$ we introduce the partial ordering $\preceq$ as follows: given $\theta_1, \theta_2\in \Theta$ we say that  $\theta_1 \preceq \theta_2 $ if, for all $z \in \ezd$ and $j\in \{1,2,\dots, K\}$, it holds
 \be
 \begin{cases}
 A_z (\theta_1 ) \geq  A_z (\theta _2) \; \text{ and }  T^{(j)}_z (\theta_1) \geq  T^{(j)}_z(\theta_2)  & \text{ if } h \searrow\,,\\
 A_z (\theta_1 ) \leq  A_z (\theta _2) \;\text{ and } T^{(j)}_z (\theta_1) \leq   T^{(j)}_z(\theta_2)  & \text{ if } h \nearrow\,.
 \end{cases}
 \en
 We point out that, due to Assumption (A5),  if $\theta_1 \preceq \theta_2$ then $\bbG_-(\theta_1) \subset \bbG_-(\theta_2)$  and $\bbG_+(\theta_1) \subset  \bbG_+(\theta_2)$. 
Since dealing with i.i.d. random variables, we  also have that the partial ordering $\preceq$ satisfies the FKG inequality: if $F,G$ are increasing events for $\preceq$, then $\bbP(F\cap G)\geq  \bbP(F)\bbP(G)$.

\emph{At this point, we can disregard the original problem and the original random objects. One could start afresh keeping in mind only  Assumptions (A2), (A3), (A4), (A5), 
Definitions \ref{vinello}, \ref{cavallo}, \ref{vichinghi_+}, \ref{def_LR_bis}, \ref{vichinghi_-}, Lemma \ref{john} and the    FKG inequality for the partial order $\preceq$ on $(\Theta, \bbP)$. It then remains to prove   \eqref{problema2d}  in Proposition \ref{carletto} for some fixed positive integer $k$ and  $L$ large enough.}

\section{Tanemura's algorithm for LR crossings in $\bbZ^2$}\label{sec_japan}
  In this section we recall a construction introduced by Tanemura in  \cite[Section 4]{T} to control  the number of LR crossings in a 2d box by stochastic domination with a 2d Bernoulli site percolation. We point out that we had to extend one definition from \cite{T} to treat more general cases (see below for details).

   In order to have a notation close to the one in \cite[Section 4]{T}, \rui{given a positive integer $M$} we consider the  box 
   \be\label{scooby}
   \L :=( [0,M+1] \times [0, M-1] )\cap \bbZ^2\,.
   \en $\L$ has a graph structure, with unoriented edges between points at distance one.
  Let $(x_1, x_2, \dots, x_n)$  be a string of points in $\L$, such that  $\{x_1, x_2, \dots,x_n\}$ is  a  connected subset of  $\L$.
   We \rrr{now}
     introduce a  total  order \rrr{$\prec$} on $\D \{x_1, \dots, x_n\}$
   (in general, given    $A\subset \bbZ^2$, $\D A:= \{ y\in \bbZ^2 \setminus A\,:\, |x-y|=1\text{ for some } x\in A\}$). \rrr{Note that we} have to modify  the definition in \cite[Section 4]{T} which is restricted there to the case that $(x_1, x_2, \dots, x_n)$ is   a  path in $\bbZ^2$.
       For later use, it is more convenient to describe the ordering \rrr{$\prec$} from the largest to the smallest element. We denote by $\Psi $ the anticlockwise rotation  of $\pi/2$ around the origin in $\bbR^2$  (in particular, 
$\Psi (e_1)=  e_2$ and $\Psi (e_2)=-e_1$). We first introduce an order $\prec_k$ on the sites  in $\bbZ^2$ neighboring $x_k$ as  follows. Putting $x_0:= x_1-e_1$, for $k=1,2,\dots, n$ we set 
\[ x_k + \Psi (v) \succ_k x_k + \Psi ^2 (v) \succ_k x_k + \Psi ^3 (v) \succ_k  x_k + \Psi^4 (v)=x_{a(k)}\,,\]
where  $v:=x_{a(k)}-x_k $ and $a(k):= \max \{j: 0\leq j\leq n \text{ and } |x_k-x_j|=1\} $.
 The order \rrr{$\prec$} on $\D  \{x_1, \dots, x_n\}$  is obtained as follows. The largest elements are the sites of $\D  \{x_1, \dots, x_n\}$   neighboring $x_n$ (if any), ordered according to $\succ_n$.
The next elements, in decreasing order, are the sites $\D  \{x_1, \dots, x_n\}$  neighboring $x_{n-1}$ but not $x_n$ (if any), ordered according to $\succ_{n-1}$. As so on, in the sense that in the generic step one has to consider the elements of  $\D  \{x_1, \dots, x_n\}$  neighboring $x_k$ but not $x_{k+1}, \dots, x_n$ (if any), ordered according to $\succ_k$ (see Figure \ref{tanemura1}).

\begin{figure}
\includegraphics[scale=0.4]{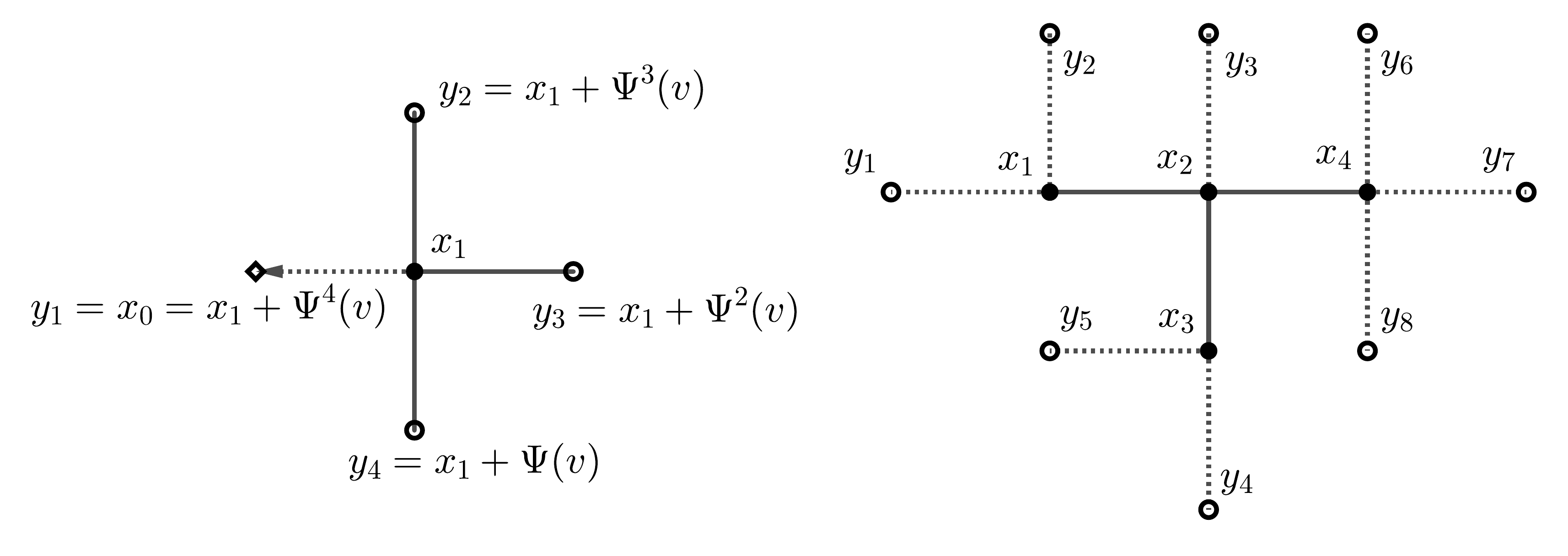}
\captionsetup{width=0.9\linewidth}
\caption{Left: ordering $\prec_1$ on $\D\{x_1\}=\{y_1,y_2,y_3, y_4\}$. We have $a(1):=0$, $v=-e_1$ and $y_1\prec y_2\prec y_3\prec y_4$.  
Right: ordering on the boundary of $\D\{x_1,x_2,x_3,x_4\}=\{y_1,\ldots,y_8\}$. We have $y_1 \prec y_2\prec \cdots \prec y_8$. A dotted segment is drawn between two points $y_i$ and $x_j$ if the point $y_i$ is a neighbor of $x_j$ but not of $x_{j+1}, \dots, x_4$. 
}
\label{tanemura1}
\end{figure}

Suppose that  we have a procedure to decide,  given  $\theta  \in \Theta$ (cf.~Definition \ref{cavallo}),   $x_1,x_2, \dots, x_n, x_{n+1}\in \L$ and $x\in \{x_1, \dots, x_n\}$, if the point $x_{n+1}\in \L$ is occupied (for $n=0$)  and if it is occupied and  linked to  $x$  (for $n\geq 1$), 
knowing the occupation state  of the points $x_1,x_2, \dots, x_n\in \L$ and the presence or absence of  links between them. 
The precise definition of occupation and link is not relevant now, here we assume  that these properties have been previously  defined and that can be checked knowing $\theta$.

We  now   define a random field $\z=\left(\z(x)\,:\, x \in \L\right)$ with $\z(x)\in \{0,1\}$ on the probability space $\bigl( \Theta, \bbQ\bigr)$. $\bbQ$ is a probability measure on $\Theta$, which can differ from the probability $\bbP$ appearing   in Definition \ref{cavallo}
(in the application $\bbQ$ will be a suitable conditioning of $\bbP$).
To define the field $\z$,  we \rrr{build below} the sets $C^s_j= ( E^s_j, F^s_j)$, with  $s \in \{0,1,\dots, M-1\}$  and  $j=1,2,\dots, M^2 $.
The construction will fulfill the following properties:  
\begin{itemize}
\item$E^s_j$ will be a connected subset of $\L$;
\item  there exists $J^s \in \{1,2\dots,M^2\}$ such that, for $j< J^s$, 
 $C^s_{j+1}=( E^s_{j+1}, F^s_{j+1})$ will be obtained from $C^s_j=( E^s_j, F^s_j)$   by adding exactly a point (called $x^s_{j+1}$) either to $E^s_j$ or to $F^s_j$; while, for $j \geq J^s$, $C^s_{j+1}=( E^s_{j+1}, F^s_{j+1})$ will equal $C_j^s=( E^s_j, F^s_j)$; 
\item  $\z \equiv 1$ on $E^s_j$ and $\z\equiv 0$ on $F^s_j$.
\end{itemize}
Note that, because of the above rules, $E^s_j \cup F^s_j =\{x^s_1, x^s_2, \dots ,x^s_j\}$ for $1\leq j \leq J^s$.
 In order to make the construction clearer, in Appendix \ref{app_tanemura} we illustrate the construction in a particular example.

 We now start with the definitions involved in the construction. In what follows, the index $s$ will vary in $\{0,1,\dots, M-1\}$. We also  set $x^s_1:=(0,s)$. We build  the sets $C^0_1$, $C^1_1$,...,$C^{M-1}_1$ as follows.  
If the point  $x^s_1$ is occupied, then we set 
 \be
\z(x^s_1):=1 \text{ and } C^s_1 :=\left( E^s_1, F^s_1 \right):= (\{x^s_1\},\emptyset) \,,\en
otherwise we set 
\be
\z(x^s_1):=0 \text{ and } C^s_1 :=\left( E^s_1, F^s_1 \right):= (\emptyset, \{x^s_1\}) \,.
\en
We then define iteratively  
\be\label{cannolo42}
C^0_2, \;C^0_3, \dots,\; C^0_{M^2},\; C^1_2,\; C^1_3,\dots,\; C^1_{M^2},\dots,
\; C^{M-1}_2,\; C^{M -1}_3, \dots,\; C^{M-1}_{M^2}
\en
  as follows.  If $E^s_1=\emptyset$, then  we declare  $J^s:=1$, thus implying that $C_1^s=C_{2}^s=\cdots= C_{M^2}^s$.
   We restrict now to the case $E^s_1\not =\emptyset$.
Suppose that we have defined all the sets  preceding $C^s_{j+1}$  in the above string \eqref{cannolo42} {(i.e. up to $C^s_j$), that we have not declared that $J^s$ equals some value in $\{1,2,\dots, j\}$ and that we want to define $C^s_{j+1}$.  
  We call $W^s_j$ the points of $\L$ involved in the construction up to this moment, i.e. \[
W^s_j = \{x_1^k:0\leq k \leq M-1\} \cup \{ x^{s'}_r :  0 \leq s'< s, \, 1< r \leq M^2\}
\cup \{ x^{s}_r : 1< r \leq j\}
\,.
\]
As already mentioned, it must be $E^s_0 \subset E^s_1\subset \cdots \subset E^s_j$ and at each inclusion either the two sets are equal or the second one is obtained from the first one by adding exactly a point. We  then write $\bar{E}^s_j$ for the  non--empty string obtained  as follows: the entries of $\bar{E}^s_j$ are the elements of $E^s_j$, moreover if $x^s_a, x^s_b \in E^s_j$ and $a<b$, then $x^s_a$ appears in $\bar{E}^s_j$ before  $x^s_b$. Note that the above property ``$x^s_a, x^s_b \in E^s_j$ and $a<b$'' simply means that the point $x^s_a$ has been added before $x^s_b$ to one of the sets $E^s_0 \subset E^s_1\subset \cdots \subset E^s_j$. Then,  on $\D E^s_j$  we have the ordering $\prec$  (initially defined) associated to the string $\bar{E}^s_j$.
We call  $\cP^s_j$ the following property:  $E^s_j$ is disjoint from  the right vertical face of $\L$, i.e.   $ E^s_j \cap \left(\{M+1\} \times \{0,1,\dots, M-1\}\right)=\emptyset$, and   $(\L\cap  \rrr{\D{E}^s_j})  \setminus W^s_j \not = \emptyset$.   If property $\cP^s_j$ is satisfied, then  we denote by $x^s_{j+1} $  the last element of $(\L \cap \rrr{\D{E}^s_j})  \setminus W^s_j $ \rrr{w.r.t. $\prec$}. We define $k$ as the largest integer $k$ such that  $x_k^s\in E^s_j$ and $|x^s_{j+1} -x^s_k |=1$. If $x^s_{j+1}$ is occupied  and linked to $x^s_k$,
 then we  set
\be
\z(x^s_{j+1}):=1 \text{ and } C^s_{j+1} :=\left( E^s_j \cup \{ x^s_{j+1}\}, F^s_j \right) \,,\en
otherwise we set 
\be
\z(x^s_{j+1}):=0 \text{ and } C^s_{j+1} :=\left( E^s_j, F^s_j \cup \{ x^s_{j+1}\} \right)\,.
\en
On the other hand, if property $\cP^s_j$ is not verified, then we declare $J^s:=j$, thus implying that $C_j^s=C_{j+1}^s=\cdots= C_{M^2}^s$.

It is possible that the set  $\cup _{s=0}^{M-1} \cup _{j=1}^{M^2} \left( E^s_j\cup F^s_j\right)$ does not fill all $\L$. In this case we  set $\z\equiv 0$ on the remaining points.
This completes the definition of the random field $\z$.

Above we have constructed the sets $C^s_j$ in the following order: 
$C^0_1$, $C^1_1$, $\dots$, $C^{M-1}_1$,
$C^0_2$,  $C^0_3$, $\dots,$ $ C^0_{M^2},$ $ C^1_2,$  $ C^1_3,$  $\dots,$  $ C^1_{M^2},$ $\dots,$ $C^{M-1}_2,$  $ C^{M -1}_3,$   $ \dots,$   $ C^{M-1}_{M^2}$. 

We make the following assumption:

\medskip

{\bf Assumption (A)}: \emph{For some $p\in [0,1]$ at every step in the above construction the probability to add 
a point to a set of the form $E^s_j$, conditioned to  the construction performed before such a step,  is lower bounded by $p$.
}

\medskip

 Call  $N_M$ the maximal number of  vertex-disjoint LR crossings  of the box $\L$ for $\z$,  where  $\{x,y\}$ is an edge if $x$ and $y$ are distinct, linked, occupied sites. Here crossings are the standard ones for percolation on \rrr{$\bbZ^2$} \cite{G}. Note that $N_M$ also equals the number of indexes $s\in \{0,1,\dots,M-1\}$ such that  $E^s_{M^2} $ intersects the right vertical face of $\L$.
 By establishing a stochastic domination on a 2--dimensional site percolation in the same spirit of \cite[Lemma 1]{GM} (cf. \cite[Lemma 4.1]{T}), \rrr{due to Assumption (A)  the following holds:}
\begin{Lemma}
Under Assumption (A)   $N_M$  stochastically dominates the
maximal number of vertex-disjoint LR crossings in $\L$ for  a site percolation  on $\bbZ^2$ of parameter $p$.
\end{Lemma}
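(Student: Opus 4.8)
The plan is to reproduce, in the present setting, the standard argument that turns a ``conditional lower bound on the success probability'' into a stochastic domination by independent Bernoulli site percolation. The key observation is that the construction of $\z$ proceeds by revealing one point $x^s_{j+1}$ at a time, in the \emph{fixed} global order $C^0_1, C^1_1, \dots, C^{M-1}_1, C^0_2, \dots, C^{M-1}_{M^2}$, and that whether a point is added to some $E^s_j$ or to some $F^s_j$ is, at the moment it is examined, a function of $\theta$ together with the entire history of the construction. I would first set up an explicit enumeration $\tau_1, \tau_2, \dots, \tau_T$ (with $T\le M(M^2-1)+M$) of the ``examination events'' actually performed — i.e. those steps at which property $\cP^s_j$ holds (for $j\ge 1$) or the initial steps $x^s_1$ — together with the associated filtration $\mathcal{F}_0\subset \mathcal{F}_1\subset\cdots$, where $\mathcal{F}_m$ records the outcomes ($\z$-value) of the first $m$ examined points and hence determines which point $\tau_{m+1}$ is examined next and to which of the two families it would be adjoined on success.

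Next I would invoke Assumption (A): at each examined step, conditionally on $\mathcal{F}_m$, the probability of ``success'' (adding the point to an $E$-set, i.e.\ setting $\z=1$ there) is at least $p$. This is exactly the hypothesis needed to apply the standard coupling lemma — essentially \cite[Lemma 1]{GM} or \cite[Lemma 4.1]{T} — which constructs, on an enlarged probability space, an i.i.d.\ family $(\eta_m)_{m\ge 1}$ of Bernoulli$(p)$ variables together with the revealed sequence, such that the indicator of success at the $m$-th examined step dominates $\eta_m$ pointwise. One then transfers the $\eta$'s back to sites of $\L$: declare a site ``$\eta$-open'' if it was examined and its associated $\eta_m=1$ (sites never examined are $\eta$-closed). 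By the domination, every site that is $\eta$-open is $\z$-open (occupied and linked to its predecessor along the construction), so the $\z$-configuration dominates the $\eta$-configuration site-by-site on $\L$. A subtlety worth recording is that the $\eta$-open sites that are examined but never reached because some ancestor failed do not help $\z$; but this only strengthens the inequality $\z\ge\eta$, so it is harmless.

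Finally I would argue monotonicity of the observable $N_M$. The quantity ``maximal number of vertex-disjoint LR crossings of $\L$'' is an increasing function of the open-site configuration on $\L$: adding open sites can only create, never destroy, crossings, hence can only increase the maximal vertex-disjoint packing. Combining this monotonicity with the site-wise domination $\z\ge\eta$ gives $N_M(\z)\ge N_M(\eta)$, and $N_M(\eta)$ is by construction the maximal number of vertex-disjoint LR crossings of $\L$ for i.i.d.\ Bernoulli$(p)$ site percolation, which is the claim. I expect the only delicate point to be the bookkeeping in the first paragraph — namely checking that ``which point gets examined next and whether success means $\z=1$'' is genuinely $\mathcal{F}_m$-measurable, so that Assumption (A) is being applied to a legitimate sequential (adapted) procedure and the coupling lemma of \cite{GM,T} applies verbatim; once that is in place, the domination and the monotonicity of $N_M$ are routine.
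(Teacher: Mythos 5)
The paper does not actually prove this lemma; it only cites \cite[Lemma~1]{GM} and \cite[Lemma~4.1]{T}. Your attempt to fill in the argument follows the right general plan (sequential revealment, conditional lower bound, coupling, monotonicity), but the last step has a genuine gap.

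You define the coupled field $\eta$ on $\L$ by setting $\eta(\tau_m)=\eta_m$ on the sites $\tau_m$ that the algorithm actually examines and $\eta\equiv 0$ on the sites it never examines, and you then assert that ``$N_M(\eta)$ is by construction the maximal number of vertex-disjoint LR crossings of $\L$ for i.i.d.\ Bernoulli$(p)$ site percolation.'' That is false as stated: the field $\eta$ you built is \emph{not} a product Bernoulli$(p)$ field. The unexamined sites are deterministically closed, and, more importantly, the random set of examined sites is itself a function of the same auxiliary uniforms that generate the $\eta_m$'s, so ``which sites are examined'' and ``what values the $\eta_m$ take'' are not independent. The pointwise bound $\z\ge\eta$ that you prove is correct, and $N_M(\cdot)$ is indeed monotone in the site configuration, so $N_M(\z)\ge N_M(\eta)$; but this only compares $N_M(\z)$ with a quantity whose law you have not identified. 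If instead you extend $\eta$ to a genuine Bernoulli$(p)$ field $\tilde\eta$ by putting fresh Bernoulli$(p)$ bits on the unexamined sites, then $\tilde\eta\ge\eta$ but you no longer have $\z\ge\tilde\eta$ pointwise (unexamined sites may be $\tilde\eta$-open but are $\z$-closed), so the chain of inequalities breaks.

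What is really needed — and what \cite[Lemma~1]{GM} and \cite[Lemma~4.1]{T} supply — is the observation that $N_M$ can be read off as a deterministic \emph{increasing} function $g$ of the revealed bit string $(X_1,X_2,\dots)$ (recall $N_M$ equals the number of indices $s$ with $E^s_{M^2}$ meeting the right face, and the whole family $\{C^s_j\}$ is a deterministic functional of the outcome sequence). Monotonicity of $g$ in the bit string is \emph{not} the same as monotonicity of ``number of crossings'' in the site configuration: flipping one $X_m$ from $0$ to $1$ rewires the entire subsequent exploration (which sites get examined, and by which tree), so this step requires a combinatorial argument specific to the algorithm. Once that monotonicity is in hand, Assumption~(A) plus the sequential domination lemma immediately gives $\bbE[g(X)]\ge\bbE[g(Y)]$ for $Y$ i.i.d.\ Bernoulli$(p)$, and one checks separately that running the algorithm on an i.i.d.\ Bernoulli$(p)$ input computes exactly the Bernoulli$(p)$ crossing count. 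Your write-up skips both the monotonicity-in-the-bit-string step and the identification of $g(Y)$ with the Bernoulli quantity, replacing them with the incorrect claim that the coupled field $\eta$ is itself Bernoulli$(p)$. The side remark about ``$\eta$-open sites that are examined but never reached'' is also confused: in Tanemura's construction a site is examined precisely when the exploration reaches it, so this case does not arise.
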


Due to the above lemma and the results on LR crossings for the Bernoulli  site percolation (cf. \cite[Remark~(d)]{GM}), we get: 
\begin{Corollary}\label{cor_pistacchio}
If  Assumption (A)   is fulfilled with $p>p_c(2)$, where $p_c(2)$ is  the critical probability for the site percolation on $\bbZ^2$,  then 
  there exist constants   $c,c'>0$ such that 
$\bbQ( N_M\geq c  M) \geq 1- e^{-c' M}$ for \rui{any positive integer $ M$}.
\end{Corollary}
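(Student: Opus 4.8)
The plan is to deduce Corollary \ref{cor_pistacchio} directly from the stochastic domination Lemma stated immediately above it, combined with the classical estimate on the maximal number of disjoint left–right crossings of a box in supercritical Bernoulli site percolation on $\bbZ^2$.

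First I would apply the preceding Lemma: under Assumption (A) with parameter $p$, the random variable $N_M$ stochastically dominates the maximal number $\widetilde N_M$ of vertex-disjoint LR crossings of the box $\L$ for an i.i.d. site percolation on $\bbZ^2$ of density $p$. Hence, for every threshold $t \geq 0$,
\be\label{pistacchio_dom}
\bbQ\bigl( N_M \geq t \bigr) \;\geq\; P_p\bigl( \widetilde N_M \geq t \bigr)\,,
\en
where $P_p$ denotes the Bernoulli site percolation measure of density $p$ on $\bbZ^2$.

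Next I would invoke the known crossing estimate for supercritical site percolation: since $p > p_c(2)$, there are constants $c,c'>0$, depending only on $p$, such that $P_p\bigl( \widetilde N_M \geq c M\bigr) \geq 1 - e^{-c' M}$ for all $M$ large enough. This is precisely \cite[Remark~(d)]{GM}, which builds on \cite[Thm.~(7.68) and Lemma~(11.22)]{G}. Although those statements are phrased for a square box $[-L,L]^2 \cap \bbZ^2$, the estimate is robust with respect to the precise shape of the box and holds for any rectangle $[0,a]\times[0,b]\cap\bbZ^2$ with $a,b \asymp M$; here $a=M+1$ and $b=M-1$, so it applies to $\L$ directly (if one prefers, one may restrict attention to a square sub-box of $\L$ of side $M-3$ and lose only a constant factor). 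Plugging $t = cM$ into \eqref{pistacchio_dom} then yields $\bbQ(N_M \geq cM) \geq 1 - e^{-c'M}$ for all sufficiently large $M$.

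Finally, to upgrade the bound to every positive integer $M$, I would argue exactly as in the Remark following Theorem \ref{teo1}: there are only finitely many remaining small values of $M$, and by shrinking $c$ (so that $cM$ drops below $1$, whence $N_M \geq cM$ reduces to $N_M \geq 0$, which is automatic) and $c'$ one makes the estimate hold trivially there as well. The constants can thus be chosen uniformly in $M \geq 1$, completing the proof. Since the statement is a direct consequence of two already-available inputs, I do not expect any genuine obstacle; the only point deserving a line of care is the passage from the square box of the classical statement to the slightly off-square rectangle $\L$, which is immediate from the comparability of the side-lengths.
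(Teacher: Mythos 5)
Your approach matches the paper's: the result is stated as an immediate consequence of the stochastic-domination lemma together with the classical LR-crossing estimate for supercritical Bernoulli site percolation from \cite[Remark~(d)]{GM}, and the paper offers no further proof beyond that one sentence. Your extra care about the off-square shape of $\L$ is a reasonable observation.

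One small slip in your cleanup for small $M$: when $0<cM<1$, the event $\{N_M\geq cM\}$ is equivalent to $\{N_M\geq 1\}$ (since $N_M$ is a nonnegative integer), not to the trivial $\{N_M\geq 0\}$. So shrinking $c$ does not by itself make the inequality automatic. The correct patch is to note that, by the same stochastic domination, $\bbQ(N_M\geq 1)\geq P_p(\widetilde N_M\geq 1)>0$ for every $M$, and then shrink $c'$ so that $1-e^{-c'M}\leq \bbQ(N_M\geq 1)$ over the finitely many small $M$. This is consistent with the rest of what you wrote and does not affect the soundness of the overall argument.
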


\section{Proof of Eq. \eqref{problema2d} as a byproduct of  Tanemura's algorithm and  renormalization}\label{sec_ginepro}
 In this section we prove Eq. \eqref{problema2d} by combining Tanemura's algorithm  and a renormalization scheme inspired by the one in \cite{GM}. For the latter we will  not discuss here all technical aspects, and postpone a detailed treatment to the next sections.

 Given $m\in \bbN_+$ and $z\in \ezd$ we set 
\be\label{mentolino} B(m):=[-m,m]^d \cap \ezd \text{ and  } B(z,m):= z+ B(m)\,.
\en
 Recall that  $U_*:= U_*(\a/2)$  (cf. Assumption (A4) and Definition \ref{vinello}) and that $\D={\rm supp}(\nu)$ and $\sup h=1$ (cf. Warning \ref{aaah}).
 Note also that, since the structural function $h$ is symmetric, by applying twice \eqref{indigestione} we get 
 $h(a, \tilde{a}) \geq h(b, \tilde{a}) -\a/2\geq h(b, \tilde{b}) -\a$ for any $b, \tilde{b}\in \D$ and $a,\tilde a \in U_*$.  
 Hence, we have 
 \be\label{mango}
h(a, \tilde{a}) \geq \sup_{ b, \tilde{b}\in \D} h(b, \tilde b) - \a =1-\a \qquad \forall a, \tilde a \in U_*\,.
\en

  \begin{Definition}[\rui{Seed}]  \label{def_seed} 
Given $z\in \ezd$ and $m\in \bbN_+$,
 we say that 
 $B(z,m)$ is a     \emph{seed} if  $A_x\in U_* $ for all $x\in B(z,m)$.
 \end{Definition}

Recall the graph $\bbG_-=(\bbV_-,\bbE_-)$ introduced in Definition \ref{vichinghi_-}.
Note that any seed is a subset of $\bbV_-$.  Moreover, 
a seed is a region of ``high connectivity'' in the minimal graph $\bbG_-$:
\begin{Lemma}\label{ironman} 
If    $B(z,m)$ is a seed, then $B(z,m)$  is a connected subset   in the graph  $\bbG_-$.  \end{Lemma}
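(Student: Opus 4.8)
The plan is to reduce connectivity of the whole seed to a single observation: two lattice points of $\ezd$ at Euclidean distance $\e$ whose marks both lie in $U_*$ are automatically joined by an edge of $\bbG_-$. Once this is known, connectivity of $B(z,m)$ follows because $B(z,m)$ is a discrete box, hence path-connected through nearest-neighbour steps that never leave it. As a preliminary, note that the seed hypothesis gives $A_x\in U_*\subseteq\D\subseteq\bbR$ for every $x\in B(z,m)$, so each point of the seed is a vertex of $\bbG_-$, i.e.\ $B(z,m)\subseteq\bbV_-$.

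For the key step, fix $x,x'\in B(z,m)$ with $|x-x'|=\e$, i.e.\ $x$ and $x'$ differing by $\e$ in exactly one coordinate. Since $A_x,A_{x'}\in U_*$, inequality \eqref{mango} yields $h(A_x,A_{x'})\ge 1-\a$, whence $h(A_x,A_{x'})-3\a\ge 1-4\a$. On the other hand $\e\le\e\sqrt d=\a/100$, and since $10\a\le 1$ (Definition \ref{vinello}) we have $1-4\a\ge 3/5>\a/100\ge\e$. Therefore $|x-x'|=\e\le h(A_x,A_{x'})-3\a$, that is $\{x,x'\}\in\bbE_-$. Hence any two nearest-neighbour points of the seed are adjacent in $\bbG_-$.

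To finish, observe that $B(m)=[-m,m]^d\cap\ezd$ is the Cartesian product of $d$ one-dimensional discrete intervals, so any two of its points can be joined by a lattice path whose consecutive points differ by $\pm\e$ in a single coordinate and which remains inside $B(m)$; translating by $z$, this path remains inside $B(z,m)$. By the previous step each edge of such a path lies in $\bbE_-$, so $B(z,m)$ is connected in $\bbG_-$. I do not expect any genuine difficulty in this argument: the only delicate point is the bookkeeping of the small constants — the edge length $\e$ against the slack $3\a$ in the definition of $\bbE_-$ and the deficit $\a$ in \eqref{mango} — which is exactly why Definition \ref{vinello} fixes $\e\sqrt d=\a/100$ and $10\a\le 1$.
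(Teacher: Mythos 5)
Your proof is correct and follows essentially the same route as the paper's: show that any two points of the seed at distance $\e$ are joined by an edge of $\bbG_-$, using \eqref{mango} together with the numerical bounds $\e\sqrt{d}=\a/100$ and $10\a\leq 1$, then conclude by nearest-neighbour connectivity of the discrete box. The bookkeeping of constants matches the paper's argument exactly.
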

 \begin{proof}  It is enough to show that $|x-y|\leq h(A_x,A_y) -3 \a$ for any $x,y \in B(z,m)$ with $|x-y|=\e$.  
  Since $\e = \a/100 \sqrt{d}$, we get  $|x-y| \leq \a/100$. 
  On the other hand, by \eqref{mango} and since $A_x,A_y \in U_*$, we have 
  $h(A_x,A_y)\geq 1-  \a $. To conclude it is enough to recall that $ 10 \a\leq 1$ (cf. Definition \ref{vinello}).
       \end{proof}
   
%
%
%
%
Recall that $p_c(2)$ is the critical probability for site percolation on $\bbZ^2$.
We now fix some relevant constants and recall the definitions of others:
\begin{itemize}
\item We fix $\e'\in (0,1)$ such that $ 1-6\e'\geq 3/4>p_c(2)$. 
\item We  fix   positive integers $m\leq n$ satisfying the properties stated in Lemma  \ref{pierpilori}  in Section \ref{sec_RN_tools} (their precise value is not relevant to follow the arguments below).  
\item Recall Definition \ref{vinello} for $\e$.
\item We let $N:= n+m+\e$.
\item $L$ is a positive number as in \eqref{problema2d}.
\item  Given $L$, the constant  $M$  (introduced in \eqref{scooby}) is defined as the smallest positive integer such that $ 4N (M+1) > 2L +5 +m+N$ \rui{(in particular,  $M$ and $L$ are of the same order).}
\end{itemize}

In the rest of this section we explain how to get \eqref{problema2d} with $k=4N$. 
\rui{As we will see in a while, such a choice of $k$ guarantees independence properties in  the construction of left-right crossings.}
\smallskip

As in Tanemura's algorithm we take $ \L :=( [0,M+1] \times [0, M-1] )\cap \bbZ^2$ (in general we will use the notation introduced in Section \ref{sec_japan}).   We recall that   $x_1^s:= (s,0)$ for $s=0,1,\dots, M-1$. 
  We set 
  \be\label{deep}
  \bar x:= (x,0,0, \dots,0)\in \bbZ^d \text{  for  } x \in \bbZ^2\,.
  \en  Below, we will naturally associate to each point $x\in \L$ the point $ 4N \bar x $ in the renormalized lattice $4N \bbZ^d\subset \ezd$.

\begin{Definition}[\rui{Conditional probability $\bbQ$}]   We set $\bbQ(\cdot) :=\bbP(\cdot \,|\, D)$ where 
\be\label{dedalo} D:=\{ B(4N \bar{x}_1^s ,m) \text{ is a seed } \forall x_1^s \in \L\}
\en
\end{Definition}
To run Tanemura's algorithm we first  need to define when 
 the point  $x_1^s$ is occupied. Our definition  will imply that the graph \rui{$\bbG\subset \bbG_+$} contains a cluster centered at $4N \bar{x}_1^s$ as in \rui{Fig.~\ref{hobbit1} (left)}, when $d=2$.  In particular, the seed $ B(4N \bar{x}_1^s ,m) $ is connected in $\bbG_+$  by a cluster of points lying  inside the box $ B(4N \bar{x}_1^s ,n) $ to four seeds adjacent to the  faces of such a box in the directions $\pm e_1$, $\pm e_2$ ($e_1,e_2,\dots, e_d$ being the canonical basis of $\bbR^d$).
The precise definition (for all $d\geq 2$) of the occupation of $x_1^s$ is rather technical and explained in Section \ref{moto_GP} (it corresponds to Definition \ref{0occ}, when   $\bar{x}_1^s$  is thought of  as the new origin of $\ezd$).
  We point out that the cluster appearing in \rui{Fig.~\ref{hobbit1} (left)} is contained in a box of radius $N+m<2N$ centered at  \rui{at $4N \bar{x}^s_1$.
   So, to verify that 
$\bar x_1^s$ is occupied, 
it is enough to know the random variables  $A_z$ with $z$  in  the interior
of the box $B( 4N \bar{x}^s_1, 2N)$. Since   the above interior parts are disjoint when varying $s$, we conclude that}  the events $\{ x_1^s \text{ is occupied}\}$ are $\bbQ$--independent when varying $s$ among $\{0,1,\dots,M-1\}$.   Moreover, by Proposition \ref{prop_occ_origin},  $\bbQ( x_1^s \text{ is occupied})\geq 1-4\e'$.

\begin{figure}
\includegraphics[scale=0.07]{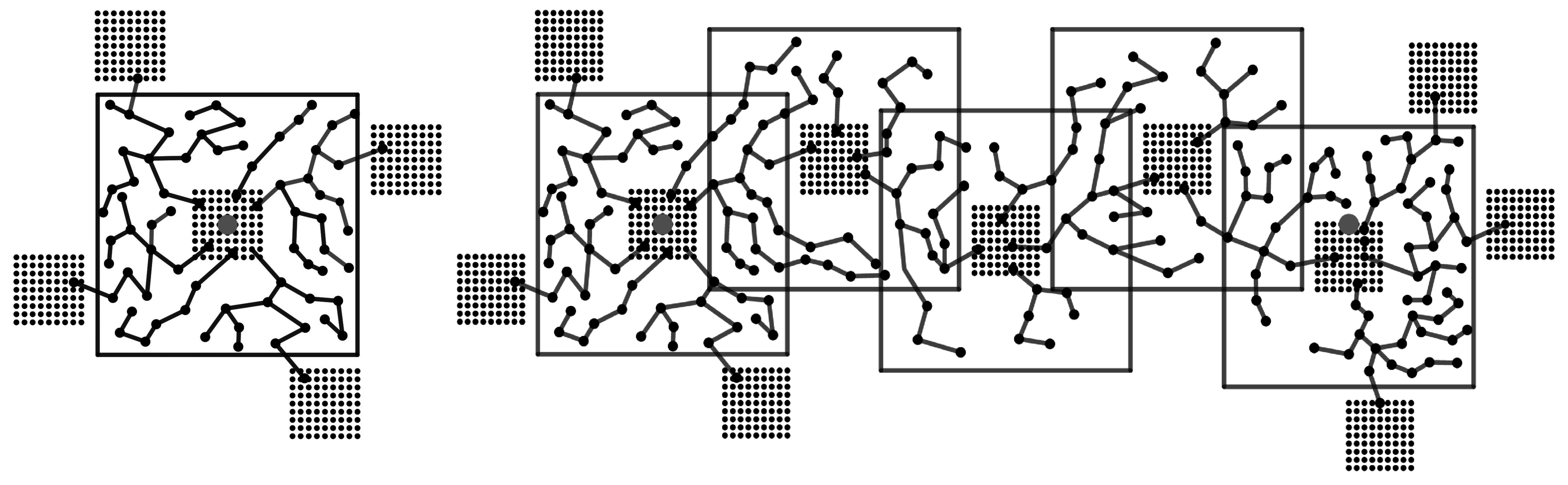}
\captionsetup{width=.9\linewidth}
\caption{Left: cluster contained in $\bbG_+$ when $x_1^s$ is occupied for $d=2$, the centered grey circle corresponds to $4N\bar x_1^s$, the large box has radius $n$, the smaller boxes have radius $m$ and are seeds. Right: cluster of $\bbG_+$ related to the definition of $(0,0)\to (1,0)$ knowing that $x^0_1=(0,0)$ is occupied, the two grey circles are given by $0$ and $4N e_1$.}
\label{hobbit1}
\end{figure}

\medskip

Knowing the occupation state of $x_1^s$, we can 
  define the sets $C_1^0, C_1^1,\dots, C_1^{M-1}$ in Tanemura's algorithm. Let us move to $C^0_2$. Let us assume for example that $x_1^0=(0,0)$ is occupied, hence $C^1_0=(\{x^0_1\},\emptyset)$. Then, by Tanemura's algorithm, one should check if $(1,0)$ is occupied and linked to the origin $x_1^0$ or not. We have first to define this concept. To this aim 
we need to  explore the graph $\bbG_+$ in the direction $e_1$ from the origin.  Roughly, when $d=2$, we say that  $ (1,0)$ is  linked to $(0,0)$ and occupied (shortly, $(0,0) \to  (0,1) $) if the graph $\bbG_+$ contains a cluster similar to the one in \rui{Fig.~\ref{hobbit1} (right)} and Fig.~\ref{hobbit3}, extending the cluster appearing in \rui{Fig.~\ref{hobbit1} (left)}.
Note that there is a seed (called $s_5$ in Fig. \ref{hobbit3}) in the proximity of $ 4N e_1 $ (the grey circle on the right in \rui{Fig.~\ref{hobbit1} (right)} and Fig.~\ref{hobbit3}) connected to four seeds neighboring  the box of radius $n$ concentric to $s_5$, one for each face in the directions $\pm e_1, \pm e_2$.  Hence, we have a local  geometry similar to the one of \rui{Fig.~\ref{hobbit1} (left)}. Moreover,  see Fig.~\ref{hobbit3}, the cluster turns  in direction $e_1$  and connects  inside $\bbG_+$ the seed $s_1$ at $4N\bar x^1_0=0$ to the seed  $s_5$ around $4N e_1$ by passing through the intermediate seeds $s_2,s_3,s_4$. Note that, in order to assure that $s_5$  lays around $4Ne_1$ the intermediate seeds have to be located alternatively up and down.
 The precise definition  (for all  $d\geq 2$) of the event $\{(0,0)  \to  (1,0) \}$, knowing that $x_1^0=(0,0)$ is occupied, is given by Definition \ref{1occ} in Section \ref{moto_GP}. Having defined this concept, we can build the set  $C^0_2$ 
 in Tanemura's algorithm.

%

 We move to $C^0_3$.
 Suppose for example that  $(0,0)  \to (1,0) $  occurs, hence $C^0_2=\bigl( \{(0,0), (1,0) \}, \emptyset\bigr)$. Then, 
 according to Tanemura's algorithm, we need to check if $(2,0)$ is linked to $(1,0)$
 and occupied (shortly, $(1,0) \to   (2,0) $). The definition of the last event   is similar to the one  of ``$(0,0) \to  (1,0) $''. Roughly, by means of three intermediate seeds (the first one given by $s_6$ in Fig.~\ref{hobbit3}) there is a cluster in $\bbG_+$ similar to the one in \rui{Fig.~\ref{hobbit1} (right)} connecting the seed in the proximity of $4N e_1$ to a seed in the proximity $8Ne_1$ and  this last seed   is connected to four seeds adjacent to the faces in the directions $\pm e_1, \pm e_2$ of a concentric box of radius $n$.   
 Let us suppose for example that  $ (1,0) \not \to  (2,0) $. Then $C^0_3=\bigl( \{(0,0), (1,0)\}, \{(2,0)\}\bigr)$.

\begin{figure}
\includegraphics[scale=0.32]{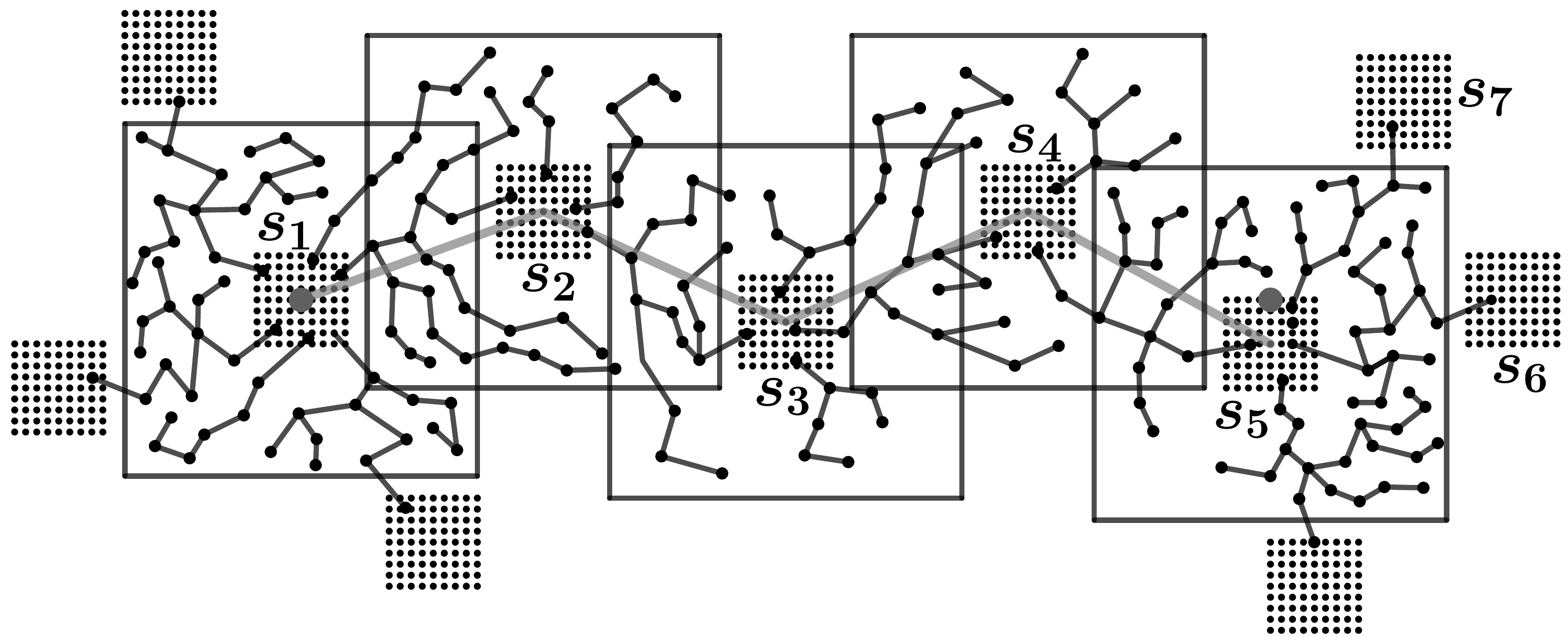}
\captionsetup{width=.9\linewidth}
\caption{\rui{Same cluster as in Figure \ref{hobbit1} (right). $s_1,s_2, \dots, s_7$ are seeds used to progressively extend the cluster along the construction.}}\label{hobbit3}
\end{figure}

 We move to $C^0_4$.  According to Tanemura's algorithm, we need to check if  $(1,0)$ is  linked to $(1,1)$ and occupied, shortly $(1,0)\to (1,1)$. This last  concept is roughly defined as follows: 
 by means of three intermediate seeds (the first one given by $s_7$ in Fig.~\ref{hobbit3}) there is a cluster in $\bbG_+$ connecting the seed $s_5$ in the proximity of $4N e_1$ to a seed in the proximity $4Ne_1+4Ne_2$ and this last seed   is connected to four seeds adjacent to the faces in the directions $\pm e_1, \pm e_2$ of a concentric box of radius $n$ (see Fig.~\ref{hobbit4}). We can then build the set $C^0_4$.

 In general, 
 for any $d\geq 2$ and  $v\in \{\pm (1,0)\,,\, \pm (0,1) \}$, the precise definition of ``$(a,b)\to (a,b)+v$  knowing that $(a,b)$ is occupied" is given by  Definition \ref{1occ}  \rui{apart from} changing origin and direction. We point out that in Definition \ref{1occ}, and in general in Section \ref{moto_GP}, we work with $\bbZ^2 \times\{0\}\subset \bbZ^d$ instead of $\bbZ^2$ ($\bbZ^2 \times\{0\}$ and $\bbZ^2$ are naturally identified).

  Proceeding in this way one defines  the whole sequence $C^0_2, \;C^0_3, \dots,\; C^0_{M^2}$. Then one has to build the sequence $C^1_2,\; C^1_3,\dots,\; C^1_{M^2}$. If $x_1^1=(0,1)$ is not occupied, i.e. $C^1_1=(\emptyset, \{(0,1)\})$, then one sets $C^1_1=C^1_2=C^1_3\dots=C^1_{M^2}$. Otherwise one starts to build a cluster in $\bbG_+$ similarly to what  done above, with the only difference that $x_1^1$ replaces $x_1^0$. As the reader can check, after reading the detailed definitions in Section \ref{moto_GP}, the region of $\ezd$ explored when 
  checking linkages and occupations for the cluster blooming from $x_1^1$ is far enough from the region explored for the  cluster blooming from $x_1^0$, and no spatial correlation emerges. 
 One proceeds in this way to complete Tanemura's algorithm.

 In Section \ref{moto_GP} we will analyze in detail the basic steps of the above construction and  show (see the discussion in Section \ref{francia}) the validity of Assumption (A) of Section  \ref{sec_japan}
with $p=1-6 \e'> p_c(2)$. As a consequence we can apply Corollary \ref{cor_pistacchio} and get that 
  there exist constants    $c_1,c_2>0$ such that 
$\bbQ( N_M\geq c _1 M) \geq 1- e^{-c_2 M}$\rui{, where} $N_M$ is the maximal number of vertex-disjoint LR crossings of $\L$ for  the graph with vertexes  given by occupied sites and having edges between nearest--neighbor linked occupied sites.

As rather intuitive  and  detailed 
 in Appendix \ref{app_ultimatum},  there is a  constant $ C>0$ (independent from $M$) such that   that event $\{N_M\geq c _1 M\}$ implies that the graph $\bbG_+$ has at least $C M$   vertex-disjoint LR crossings of a 2d slice of  size  $O(L)\times O(L) \times O(N)^{d-2}$ (recall that  $L \asymp M$). In particular, as discussed in Appendix   \ref{app_ultimatum},  the bound  $\bbQ(N_M\geq c _1 M) \geq 1- e^{-c_2 M}$ implies the estimate \eqref{problema2d} with $k=4N$ and  new positive constants $c_1,c_2$ there.
%

\begin{figure}
\includegraphics[scale=0.35]{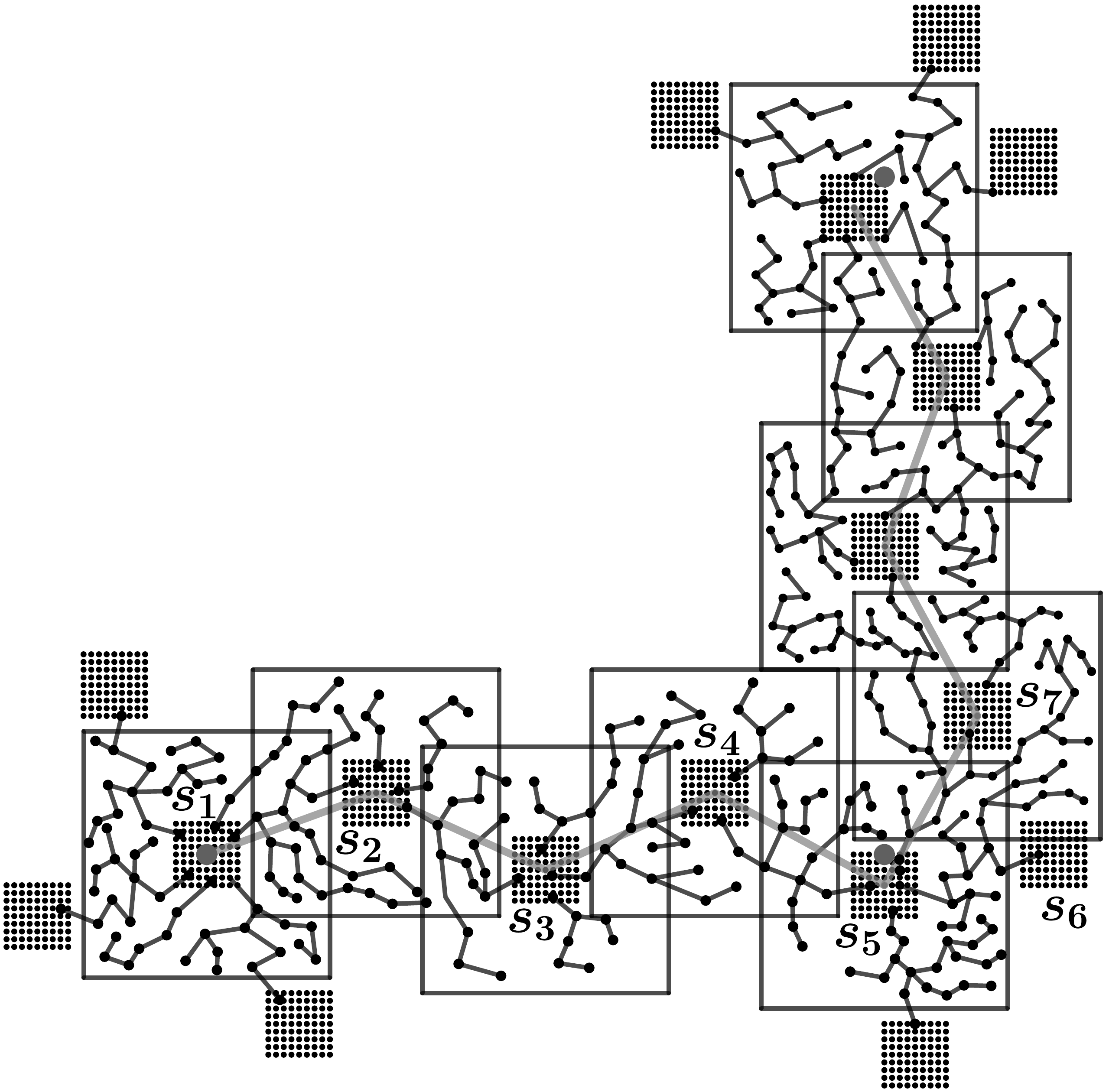}
\captionsetup{width=.9\linewidth}
\caption{Further extension of the cluster in $\bbG_+$ when $(1,0)\to (1,1)$.}
\label{hobbit4}
\end{figure}



\section{Renormalization: preliminary tools}\label{sec_RN_tools}
In  the rest  we will often write $\bbP(E_1,E_2, \dots, E_n)$ instead of $\bbP(E_1\cap E_2 \cap \cdots \cap E_n)$, also  for other probability measures.
 For the readers convenience we recall Definitions \ref{vichinghi_+} and \ref{vichinghi_-} of the graphs  $\bbG_+= (\bbV_+, \bbE_+)$ and  $\bbG_-=(\bbV_-,\bbE_-)$:
 \begin{align*}
    \bbV_-&:=\{z\in \ezd\,:\, A_z \in \bbR \} \,,\\
   \bbE_- &:=\left \{ \{z,z'\}: z\not= z' \text{ in } \bbV_-\,, \; |z-z'| \leq h(A_z,A_{z'}) -3\a \right\}\,,\\
   \bbV_+&:=\{z\in \ezd\,:\, A^{\rm au}_z \in \bbR \} \,,\\
  \bbE_+& :=\left \{ \{z,z'\}: z\not = z' \text{ in } \bbV\,, \;\;\; |z-z'| \leq h(A^{\rm au}_z,A^{\rm au}_{z'}) -\a \right\}\,.
   \end{align*}
We also  introduce the intermediate graph $\bbG$ (trivially, we have $\bbG_-\subset \bbG\subset \bbG_+$):
 \begin{Definition}[\rui{Graph $\bbG= (\bbV, \bbE )$}]  \label{vichinghi}
 On the probability space $(\Theta,\bbP)$ we define the graph $\bbG= (\bbV, \bbE )$
 as 
  \begin{align}
   & \bbV :=\{z\in \ezd\,:\, A_z \in \bbR \} \,,\\
  & \bbE :=\left \{ \{z,z'\}: z\not= z' \text{ in } \bbV_-\,, \; |z-z'| \leq h(A_z,A_{z'}) -2\a \right\}\,\,.  \end{align}
   \end{Definition}

  We introduce the following conventions:

\begin{itemize}

\item  Given  $x\in \bbV  $  and $C\subset \bbV $ with $x\not \in C$, we say that $x$ is \rrr{adjacent} to $C$ inside $\bbG $ if there exists $y\in C$ such that $\{x,y\}\in \bbE$.

\item  Given $A,B ,C \subset \ezd$, we say that ``$A \leftrightarrow B$ in $C$ for $\bbG$''  if there exist $x_1,x_2, \dots, x_k \in C \cap \bbV $ such that $x_1 \in A$, $x_k \in B$ and $\{ x_i ,  x_{i+1}\} \in \bbE$ for all $i: 1\leq i <k$.

\item Given a bounded set $A\subset \bbR^d$  we say that ``$A\lrgh \infty $ for $\bbG$'' if there exists an unbounded  path in $\bbG$ starting at some point in $A$.

\end{itemize}

Similar definitions hold for the graphs   $\bbG_-= (\bbV_-, \bbE_-)$ and $\bbG_+=(\bbV_+,\bbE_+)$.

\begin{Definition}[\rui{Sets $B(m)$, $T(n)$, $T(m,n)$, $K(m,n)$}] \label{sambinaA}
For $m \leq n\in\bbN_+$ we define the following sets:
\begin{align*}
&  B(m):=[-m,m]^d \cap \ezd \text{ and  } B(z,m):= z+ B(m)\,,\\
&  T(n):=\{ x\in \ezd :  n-1  <  \|x\|_\infty \leq n , \,0 \leq x_i \leq x_1 \; \forall i=1,2,\dots, d\}\,, \\
& T(m,n):= \bigl(  [n+\e ,n+\e + 2m]   \times [0,n]^{d-1}\bigr)\cap \ezd
 \,,\\
 & \rui{K(m,n):=\{
x \in \bbV\cap T(n) : \text{$x$  is adjacent in $\bbG$   to  a seed included in   $T(m,n)$}\}\,.}
\end{align*}
 \end{Definition} 
\rui{For the reader's convenience, in the above definition  we have  also  recalled \eqref{mentolino}.
We refer to Figure \ref{messicano1}-(left) for some illustration.  We recall that seeds have been introduced in Definition \ref{def_seed}. The definition of $K(m,n)$ can be restated as follows:
 $K(m,n)$ is given by the points    $x\in \bbV \cap T(n)$ such that, for some 
  $z \in \ezd$, the box   $B(z,m) \subset T(m,n)$ is  a   seed and 
  $\exists y  \in B(z,m)$ with $ \{ x,  y\}\in \bbE $.}

\begin{figure}
\includegraphics[scale=0.3]{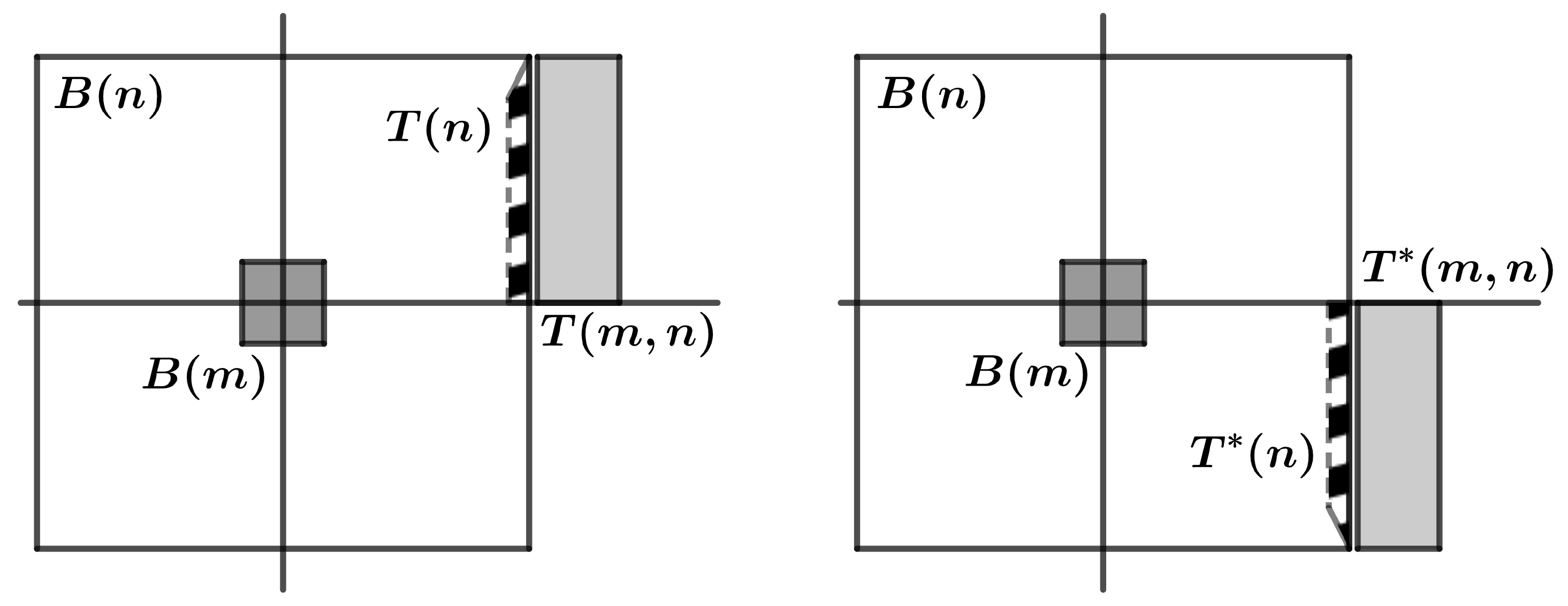}
\captionsetup{width=.9\linewidth}
\caption{Left:  sets  $T(n)$ and $T(m,n)$.  Right:  sets  $T^*(n)$ and $T^*(m,n)$. }\label{messicano1}
\end{figure}
 


%
%

  The following proposition  is the analogous of \cite[Lemma 5]{GM} in our \rui{context}. It will enter  in the proof of   Lemma \ref{pierpilori}  and in the proof of  Proposition \ref{prop_occ_origin} (which lower bounds from below the probability that in  the first step  of the renormalization scheme we enlarge the cluster of occupied sites).
   
 \begin{Proposition}\label{cinquina}
Given $\eta \in (0,1)$, there exist positive integers  $m=m(\eta)$ and $n=n(\eta)$ such that $m>2$, $2m < n$, $2m|n$ and 
\be\label{maggiolino}
 \bbP \bigl( B(m) \leftrightarrow K (m,n) \text{ in $B(n)$ for } \bbG \bigr) > 1-\eta\,.
\en
\end{Proposition}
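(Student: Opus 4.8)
The plan is to adapt the proof of \cite[Lemma 5]{GM}, working throughout with the sandwiched graphs $\bbG_-\subseteq\bbG$ and relying only on the two facts isolated in Section \ref{scremato}: that $\bbG_-$ percolates $\bbP$--a.s.\ (Lemma \ref{john}) and that the partial order $\preceq$ satisfies the FKG inequality.

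First I would produce the connection $B(m)\leftrightarrow\infty$ with probability close to $1$. Since $\bbG_-$ percolates a.s., the event $\{\exists z\in B(m):\,z\leftrightarrow\infty\text{ for }\bbG_-\}$ increases, as $m\to\infty$, to the a.s.\ event ``$\bbG_-$ percolates'', so $\rho_m:=1-\bbP(B(m)\leftrightarrow\infty\text{ for }\bbG_-)\downarrow 0$. On $\{B(m)\leftrightarrow\infty\text{ for }\bbG_-\}$ any infinite self-avoiding path from $B(m)$ has a last vertex inside $B(n)$; since edges of $\bbG_-$ have length $\le 1-3\a$ while the lattice spacing satisfies $\e\sqrt d=\a/100$, that vertex lies in the outer part of the shell $\{x\in\ezd:\,n-1+3\a<\|x\|_\infty\le n\}$. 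This shell is covered by the $\kappa=\kappa(d)$ images of the outer part of $T(n)$ under signed coordinate permutations, which preserve the laws of $(A_z)_{z\in\ezd}$, of $B(m)$ and of $\bbG$. Writing $F_i$ for the event that $B(m)$ is connected, inside $B(n)$ and in $\bbG$, to the $i$--th such image, the $F_i$ are increasing events (because $\theta_1\preceq\theta_2\Rightarrow\bbG(\theta_1)\subseteq\bbG(\theta_2)$), of equal probability by symmetry, with $\bigcup_i F_i\supseteq\{B(m)\leftrightarrow\infty\text{ for }\bbG_-\}$. Applying the FKG square--root trick to the decreasing events $F_i^c$ gives $\bbP\bigl(B(m)\leftrightarrow T(n)\text{ in }B(n)\text{ for }\bbG\bigr)\ge 1-\rho_m^{1/\kappa}$, which tends to $1$ as $m\to\infty$. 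I then fix $m$ (even, $>2$, large) so that this probability exceeds $1-\eta/2$, uniformly over integers $n>2m$; $n$ will be taken large later, with $2m\mid n$.

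It remains to upgrade ``reaching $T(n)$'' to ``reaching $K(m,n)$'', i.e.\ to a boundary vertex sitting right in front of a seed inside $T(m,n)$. The subgraph of $\bbG$ induced on $B(n)$ — hence the connected cluster $\cC$ of $B(m)$ within $B(n)$ and the vertex $v^*\in\cC\cap T(n)$ it reaches — depends only on $\{A_z:z\in B(n)\}$, whereas $T(m,n)$ is disjoint from $B(n)$, so $\{A_z:z\in T(m,n)\}$ is i.i.d.\ and independent of $\cC$. Conditioning on $\cC$, I would use that $v^*$ has a $\bbG_-$--edge, whence $\sup_{a'\in\D}h(a',A_{v^*})\ge\e+3\a$ is bounded below, and combine this with Assumption (A4) through \eqref{indigestione}--\eqref{mango}, Assumption (A5), and the scale separation $\e\sqrt d=\a/100\ll\a$, to check — by a fussy but elementary geometric estimate in the spirit of Appendix \ref{app_locus} — that an $m$--box $B(z_0,m)\subseteq T(m,n)$ placed right in front of $v^*$ is, on the event $\{A_w\in U_*\ \forall w\in B(z_0,m)\}$ of positive probability $q=q(m)$ (independent of $\cC$), adjacent in $\bbG$ to $v^*$; thus $v^*\in K(m,n)$ on that event. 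To boost this conditional probability from ``bounded away from $0$'' to $\ge 1-\eta/2$, I would, as in \cite{GM}, run the attachment simultaneously at $\asymp n/m$ transversally well--separated boundary vertices reached by $\cC$, whose associated seed events live in disjoint parts of $T(m,n)$ and are therefore independent — or, equivalently, iterate the whole construction over a telescoping family of concentric boxes, absorbing the iteration depth into the final choice of $n$. Combining the two halves and taking $n$ large with $2m\mid n$ yields \eqref{maggiolino}.

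The real obstacle is precisely this last step. Unlike in \cite{GM}, the events ``$v$ is adjacent to a seed in $T(m,n)$'' are neither independent across nearby $v$ nor independent of the graph $\bbG$ used to reach $v$, so improving the attachment probability from ``bounded away from $0$'' (easy) to ``close to $1$'' (the only form useful here) requires keeping quantitative control of how — and how fatly — the cluster of $B(m)$ reaches the shell of $B(n)$; this is where the spatial correlations of the $h$--generalized Boolean model make the Grimmett--Marstrand scheme genuinely more delicate than in \cite{GM}. The accompanying geometric inequalities (that the placed $m$--box really lies within one admissible $\bbG$--edge of the reached vertex, given $\|v^*\|_\infty\le n$, $B(z_0,m)\subseteq T(m,n)$ and $\sup h=1$) are routine but cumbersome, and I would relegate them to an appendix.
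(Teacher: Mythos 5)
Your first half is essentially the paper's Lemma~\ref{analogo4}: you run the square--root trick over the $2^d d$ isometric copies $T_{\s,J}(n)$ of $T(n)$, using that $\preceq$ satisfies FKG and that $\bbG_-\subset\bbG$ with uniformly short edges. That part is sound. But you prove only \emph{existence} of a connection from $B(m)$ to $T(n)$, not a \emph{count} of how many boundary vertices in $T(n)$ the cluster reaches, and the count is indispensable for the second half.

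Indeed, the step you yourself flag as ``the real obstacle'' is a genuine gap, and it is precisely what the paper's Lemma~\ref{analogo3} (the analogue of \cite[Lemma~3]{GM}) is for. Your attachment event --- ``an $m$--box in front of $v^*$ is a seed'' --- has probability $q(m)$ bounded away from $1$, and making it close to $1$ requires trying it at many well--separated spots. To do that you need to know that, with probability $\ge 1-\eta/2$, the cluster $\cC$ of $B(m)$ in $B(n)$ for $\bbG_-$ reaches at least $\ell=(2m)^{d-1}3^{d-1}M\e^{-d}$ vertices of $T(n)$ that each satisfy the reaching--out condition $d(x,B(n)^c)\le h_*(A_x)-3\a$; that is exactly what the set $V_n$ counts. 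The paper gets this from Lemma~\ref{analogo3}: if $|U_n|<k$ and $B(m)\leftrightarrow\infty$, then with probability $\ge e^{-c(d)\l_* k}$ the annulus $A(n+1)$ is empty of $\bbV_-$, forcing $U_{n+1}=\emptyset$; since these events are disjoint over $n$, the sum $\sum_n\bbP(|U_n|<k,\,B(m)\leftrightarrow\infty)$ is finite, hence the summand $\to 0$. Lemma~\ref{analogo4} then converts the count on $U_n$ into a count on $V_n\subset T(n)$ by FKG and symmetry. You sketch the idea (``iterate over a telescoping family of concentric boxes, absorbing the iteration depth into $n$'') but never state or prove any such counting lemma, and your proposed alternative --- ``run the attachment simultaneously at $\asymp n/m$ transversally well--separated boundary vertices reached by $\cC$'' --- presupposes that $\cC$ reaches that many, which is the unproved claim. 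Once the counting is in hand, the rest is as you describe and as the paper does: partition $T(m,n)$ into side--$2m$ boxes $D_k$, thin to mutually disjoint boxes $\cI_\natural$ with $|\cI_\natural|=M$ each fronted by a point of $V_n$, use that $T(m,n)$ is disjoint from $B(n)$ so the seed events $G_k$ are conditionally i.i.d.\ given $\cF_n$, take $M$ with $(1-\rho)^M<\eta/2$, and finish with the geometric estimate $|x-y|\le h_*(A_x)-3\a+\a/100\le h(A_x,A_y)-2\a$ (via \eqref{indigestione} with $\d=\a/2$) to conclude $x\in K(m,n)$.

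A smaller remark: you should make the reaching--out condition on $x\in V_n$ explicit, namely $d(x,B(n)^c)\le h_*(A_x)-3\a$ (rather than merely ``$v^*$ has a $\bbG_-$--edge''). Without it, knowing $x\in T(n)$ does not by itself guarantee that $x$ is within one admissible $\bbG$--edge of the seed placed in $T(m,n)$; the inequality on $h_*(A_x)$ is what lets the $\a/2$--slack of Assumption~(A4) and the $\a/100$ lattice spacing close the gap.
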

This proposition  is a consequence of Lemma \ref{john} concerning the percolation of graph $\bbG_-$. Even if
having a seed in a specific place is an event of small probability, the big number
       of possible configurations for the seed entering in the definition of $K(m,n)$ makes the event in  \eqref{maggiolino} of high probability.
   We postpone the proof of Proposition \ref{cinquina} to Section \ref{sio5}.

\medskip
   




It is convenient to introduce the function $h_*: \D \to \bbR$ defined as
\be\label{quiquoqua}
h_*(a): =\sup_{b \in \D} h(a,b)\,.
\en
Moreover, given a finite set $R\subset \ezd$, we define  the  non--random  boundary set 
\be
\partial  R\,:=\{ y \in \ezd \setminus R \,:\, d(y,R ) \leq 1-2\a\}\,,
\en
where $d(\cdot, \cdot)$  denotes the Euclidean distance. Note that the edges of $\bbG$ have length bounded by $1-2\a$.
To avoid ambiguity, we point out that in what follows   the set $\partial  R\cap B(n)$ has to be thought of  as $(\partial  R)\cap B(n)$ and not as $\partial  (R\cap B(n))$.

The following lemma (which is the  analogous of \cite[Lemma 6]{GM} in our \rui{context})
will be crucial in estimating from below the probability to enlarge the occupied cluster   at a generic  step of the renormalization scheme (cf. Section \ref{sec_ginepro}). More specifically, it will allow to prove Proposition  \ref{prop_occ_e1} and to control the further steps in the renomalization scheme as explained in Section \ref{francia}. Recall Definition \ref{vinello} of $U_*$.
\begin{Lemma}\label{pierpilori}
Fix $\e'\in (0,1)$.  Then there exist positive integers 
$m$
and $n$,  with  $m>2$, $2m < n$ and  $2m|n$,
 satisfying  the following property.

 Consider the following sets (see Figure \ref{emma}):
\begin{itemize}
\item Let  $R $ be a finite subset of  $ \ezd$  satisfying
 \be\label{mare100}
B(m) \subset R\,,\qquad 
\left( R\cup \partial R\right)\cap \left( T(n) \cup T(m,n)\right) =\emptyset\,. 
\en  
\item For any $x\in  R\cup \partial  R$, let $\L(x)$ be a  subset of $\{1,2,\dots, K\} $. We  suppose that there exists 
 $k_*\in \{1,2,\dots, K\}$ such that 
 \be\label{monti100}
 k_*\not \in \cup_{x\in D} \L(x) \,,
 \en
where 
  $D\subset \ezd$ is  defined as 
  \be\label{campana} 
  \begin{split} D:= & \left( 
  \partial R \cap B(n) 
\right)\\&  \cup  \left\{  x\in R\,:\,  \exists y\in  \partial  R \cap B(n) 
\text{ with } |x-y| \leq 1-2\a\right\}\,.
\end{split}
\en
\end{itemize}

 Consider the following events:
 \begin{itemize}
 \item 
 Let   $H$ be any event in the  $\s$--algebra $\cF$ generated by the random variables 
$(A_x)_{x\in R \cup \partial  R}$ and  $ (T_x^{(j)})_{x \in R \cup \partial R\,, \, j\in \L(x) }$.

\item 
Let $G$ be the event that there exists  a string  $(z_0,z_1, z_2, \dots, z_\ell)$  in  $\bbV$ such that 
\begin{itemize}
\item[(P1)] $z_0\in R$;
\item[(P2)] $z_1 \in \partial R \cap B(n) $;
\item[(P3)] $z_2, \dots, z_\ell \in B(n) \setminus \bigl( R \cup \partial R\bigr)$;
\item[(P4)] $ z_2,\dots, z_\ell$ is a path in $\bbG$;
\item[(P5)] $z_\ell \in K (m,n)$;
\item[(P6)] \rrr{$T^{(k_*)} _{z_0}\in U_*$} and \rrr{$T^{(k_*)} _{z_1}\in U_*$};
\item[(P7)] $|z_0-z_1|\leq 1-2\a$;
\item[(P8)] $|z_1-z_2|\leq \rrr{h_*( A_{z_2})}-2\a$.
\end{itemize}
 
\end{itemize}
Then $\bbP( G\,|\, H) \geq 1-\e'$.
\end{Lemma}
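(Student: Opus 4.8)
My plan is to reduce the event $G$ to the combination of a ``purely fresh'' connection event, independent of $\cF$, and a sprinkling event carried by the field $T^{(k_*)}$. I would treat the case $h\searrow$, the case $h\nearrow$ being symmetric ($\inf,\min,\le$ replaced by $\sup,\max,\ge$). First fix $m,n$ by applying Proposition \ref{cinquina} with a parameter $\eta$ to be chosen below (in terms of $\e'$); I will in addition take $m,n$ large, so in particular $m>2$, $2m<n$, $2m\mid n$, and, writing $\mathcal E:=\{B(m)\leftrightarrow K(m,n)\text{ in }B(n)\text{ for }\bbG\}$, one has $\bbP(\mathcal E)>1-\eta$. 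The first step is to note that $\mathcal E$ implies the ``fresh'' event $\tilde{\mathcal E}$: \emph{there exist $z_1\in\partial R\cap B(n)$ and a $\bbG$--path $(z_2,\dots,z_\ell)$ inside $B(n)\setminus(R\cup\partial R)$ with $z_\ell\in K(m,n)$ and $|z_1-z_2|\le h_*(A_{z_2})-2\a$.} Indeed, on $\mathcal E$ take a $\bbG$--path in $B(n)$ from $B(m)\subset R$ to $K(m,n)$; since $K(m,n)\subset T(n)$ is disjoint from $R\cup\partial R$ by \eqref{mare100}, the last vertex of this path lying in $R\cup\partial R$ must lie in $\partial R$ (a vertex in $R$ would have its successor within $1-2\a$ of $R$, hence in $\partial R$, against maximality); take $z_1$ to be it and $(z_2,\dots,z_\ell)$ the remaining segment, using that a $\bbG$--edge $\{z_1,z_2\}$ satisfies $|z_1-z_2|\le h(A_{z_1},A_{z_2})-2\a\le h_*(A_{z_2})-2\a$. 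Crucially, $\tilde{\mathcal E}$ is measurable with respect to $(A_x)_{x\notin R\cup\partial R}$ alone — the path, the set $K(m,n)$ and its defining seeds all lie in $\ezd\setminus(R\cup\partial R)$ by \eqref{mare100}, and $z_1$ enters only through a geometric inequality — so $\tilde{\mathcal E}$ is independent of $\cF$ and $\bbP(\tilde{\mathcal E}\mid H)=\bbP(\tilde{\mathcal E})\ge\bbP(\mathcal E)>1-\eta$.

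The second step is the sprinkling. On $\tilde{\mathcal E}$ a triple $(z_1,z_2,\dots,z_\ell)$ as above realizes (P2)--(P5) and (P8); and since $z_1\in\partial R$ there is $z_0\in R$ with $|z_0-z_1|\le1-2\a$, giving (P1), (P7) and forcing $z_0,z_1\in D$, so by \eqref{monti100} the field $(T^{(k_*)}_x)_{x\in D}$ is independent of $\cF$. The key intermediate claim — the analogue of \cite[Lemma 6]{GM} — is that there is an event $\mathcal E_r$, measurable with respect to $(A_x)_{x\notin R\cup\partial R}$, with $\bbP(\mathcal E_r)\ge1-\eta$, on which there are $r$ pairwise disjoint ``attachment bonds'' $\{z_0^{(i)},z_1^{(i)}\}\subset D$ (with $z_0^{(i)}\in R$, $z_1^{(i)}\in\partial R\cap B(n)$, all $2r$ endpoints distinct) each extendable to a string as in $\tilde{\mathcal E}$, where $r$ can be made as large as we please once $m,n$ are large enough; this I would prove combining Proposition \ref{cinquina}, the FKG inequality for $\preceq$, and Lemma \ref{john}. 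Granting it: on $\mathcal E_r$ the events $\{T^{(k_*)}_{z_0^{(i)}}\in U_*,\,T^{(k_*)}_{z_1^{(i)}}\in U_*\}$ are, conditionally on $(A_x)_{x\notin R\cup\partial R}$ and on $\cF$, independent with common probability $q^2>0$, where $q:=\bbP(T^{(k_*)}_x\in U_*)>0$ (as (A2) gives $\l_*<\l$ and (A4) gives $\nu(U_*)>0$); hence at least one of them holds with conditional probability $\ge1-(1-q^2)^r$. Taking $\eta\le\e'/2$ and then $m,n$ with $(1-q^2)^r\le\e'/2$ yields $\bbP(G\mid H)\ge1-\e'$, once one checks that on a successful bond (P6) forces $z_0,z_1\in\bbV_+$ (as $A^{\rm au}_{z_0}\le T^{(k_*)}_{z_0}\in U_*\subset\D$, likewise for $z_1$) and then, via \eqref{mango}, Assumption (A4) with $\d=\a/2$ (which gives $h(A^{\rm au}_{z_1},A_{z_2})\ge h_*(A_{z_2})-\a/2$), (P7), (P8) and $\bbG\subset\bbG_+$, that $(z_0,z_1,\dots,z_\ell)$ is a $\bbG_+$--path; so all of (P1)--(P8) hold, i.e. $G$ occurs.

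The hard part will be the key claim of the second step — that conditionally on $\cF$ there are, with high probability, many pairwise disjoint extendable attachment bonds, \emph{uniformly} over the admissible region $R$, the label sets $\L(x)$ and the conditioning event $H\in\cF$. This is precisely where, as the authors point out, the spatial correlations of the $h$--generalized Boolean model obstruct a verbatim transcription of \cite[Lemma 6]{GM}: one cannot simply ``open'' a single boundary bond as in Bernoulli percolation, and must instead harvest enough independent sprinkling sites along the fresh cluster starting from nothing more than $B(m)\subset R$ together with \eqref{mare100}--\eqref{monti100}. The bookkeeping of which sprinkling index $k_*$ is still ``free'' on $D$, encoded by \eqref{monti100}, is exactly what lets the sprinkling be independent of everything revealed so far.
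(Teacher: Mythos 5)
Your overall architecture matches the paper's: isolate the boundary attachment site $z_1$ and its partner $z_0$, observe that everything except the sprinkling decision (P6) is carried by $A$--values outside $R\cup\partial R$ (hence independent of $\cF$), and then sprinkle with the reserved index $k_*$ on the pair $\{z_0,z_1\}\subset D$, using \eqref{monti100} to guarantee independence from $H$. You also correctly spot that the crux is to exhibit \emph{many} pairwise disjoint attachment bonds with high probability. That is indeed the analogue of \cite[Lemma 6]{GM}, and the paper isolates it as Lemma \ref{falchetto}.

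However, you leave that crux unproved, and the route you sketch for it (``combining Proposition \ref{cinquina}, the FKG inequality for $\preceq$, and Lemma \ref{john}'') is not what makes it work. FKG and Lemma \ref{john} are already spent \emph{inside} Proposition \ref{cinquina}; they do not by themselves upgrade ``one connection exists'' to ``many disjoint attachment bonds exist.'' The paper's actual argument is different and is the key idea you are missing. It introduces the set $V_R$ of boundary candidates $z_1\in\partial R\cap B(n)$ for which an extension $(z_2,\dots,z_\ell)$ as in (P3)--(P5),(P8) exists, notes that $V_R$ is measurable in the $A$--field \emph{off} $R\cup\partial R$ (Remark \ref{iacopo}), and then plays the $A$--field \emph{on} $\partial R$ against it: if $V_R$ were small (say $|V_R|\le t$), then by independence the event $V_R\cap\bbV=\emptyset$ would have probability at least $\bbP(|V_R|\le t)(1-p)^t$ with $p=1-e^{-\l_*\e^d}$, but on $V_R\cap\bbV=\emptyset$ one has $B(m)\not\leftrightarrow K(m,n)$ in $B(n)$, so this quantity is at most $\eta$ by Proposition \ref{cinquina}. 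This gives $\bbP(|V_R|>t)\ge 1-(1-p)^{-t}\eta$, not $\ge 1-\eta$; correspondingly the constants must be fixed in the order $t\to\eta\to(m,n)$, so that $(1-\g^2)^{(t\e^d/c(d))-1}\le\e'/2$ and $(1-p)^{-t}\eta\le\e'/2$. Your proposed chain ``$\eta\le\e'/2$, then take $m,n$ with $(1-q^2)^r\le\e'/2$'' does not produce this, because $r$ (or $t$) does not grow by enlarging $m,n$; rather, one fixes $t$ first and shrinks $\eta$. Finally, the disjointness of the $2r$ endpoints, which you build into $\mathcal E_r$ by fiat, is in the paper obtained afterwards by a short packing argument: from any $B$ with $|B|>t$ one extracts $\tilde B\subset B$ of size at least $|B|\e^d/c(d)-1$ with pairwise distances $\ge 2$, which makes the points $x$ and their deterministic partners $x_*$ pairwise distinct, so that $\bbP(F_B)\ge 1-(1-\g^2)^{|\tilde B|}$. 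Without the Lemma \ref{falchetto} mechanism and the ensuing thinning, the sprinkling step has nothing quantitative to stand on.
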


\begin{figure}
\includegraphics[scale=0.2]{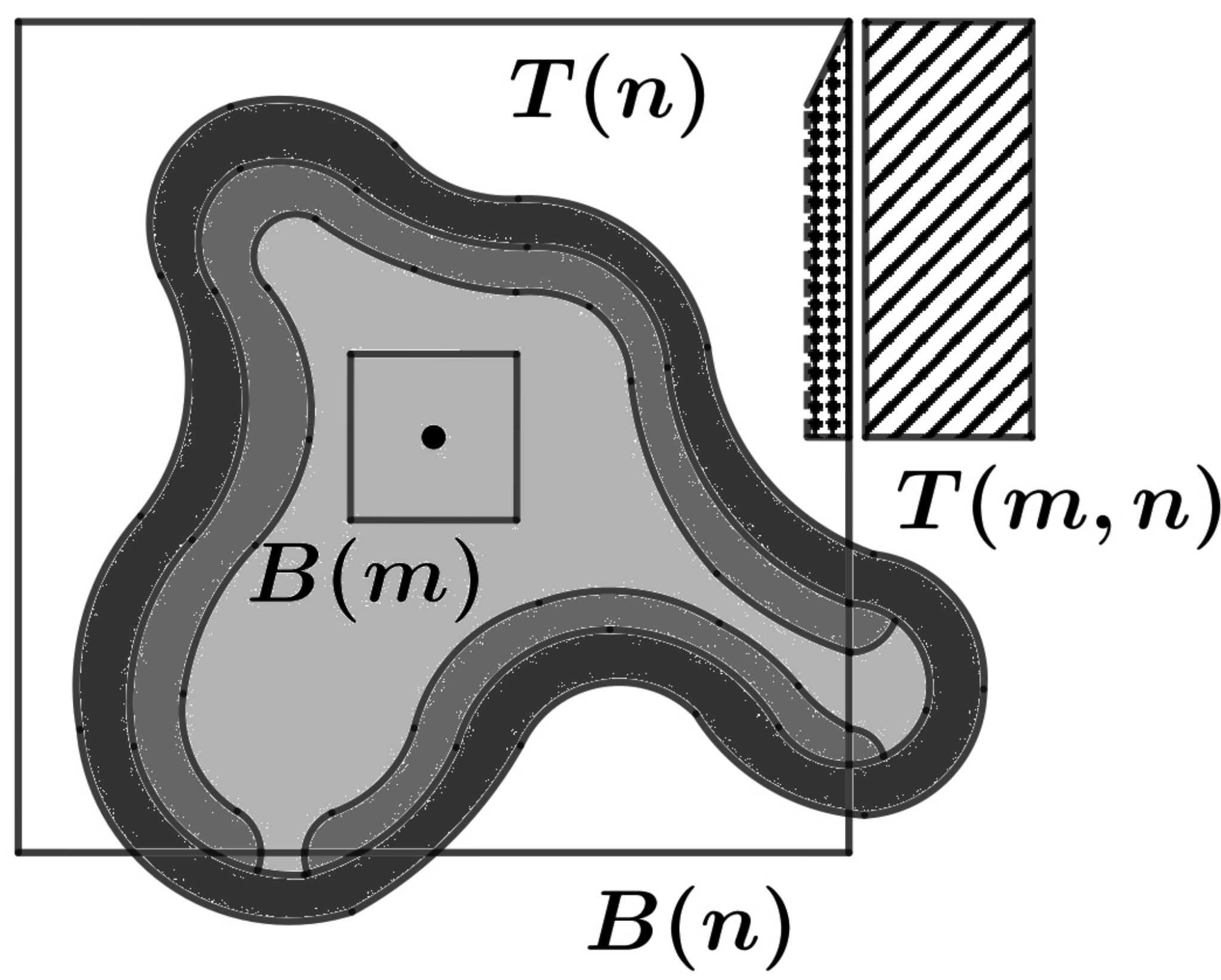}
\captionsetup{width=.9\linewidth}
\caption{$\partial R$ is the  very dark grey contour.  $R$ is given by the  light/dark grey region around the origin.  $D\setminus \bigl( \partial R \cap B(n)\bigr)$ is the dark grey subset  of $R$. }\label{emma}
\end{figure}
We postpone  the proof of Lemma \ref{pierpilori} to Section \ref{trieste65}.
We point out that   the above properties (P6), (P7), (P8) (which can appear a little exotic now) will be crucial to derive the  $\rrr{\bbG_+}$--connectivity issue stated in Lemma \ref{piccolino} \rrr{below}. Indeed, although  $(z_0,z_1, z_2, \dots, z_\ell)$ could   be not  a path in $\bbG$, one  can prove that  it is a path in $\rrr{\bbG_+}$.


\subsection{The sets \rrr{$E\bigl[C,B,i\bigr] $} and  $F\bigl[C,B,B',i\bigr]$}
\label{lenticchie}

In the next  section we will iteratively construct random subsets of $\ezd$ sharing the property to be  connected in $\bbG_+$. We  introduce here the fundamental building blocks, which are given by the sets $E\bigl[C,B,i\bigr] $ and  $F\bigl[C,B,B',i\bigr]$ (they will appear again in Definition \ref{legoland}):
  \begin{Definition}[\rui{Sets $E\bigl[C,B,i\bigr] $ and  $F\bigl[C,B,B',i\bigr]$}] \label{def_triade}
  Given three sets $C, B, B'\subset \ezd $  and given $i\in \{1,2,\dots, K\} $,  we define  the random subsets 
 $E,F\subset \ezd$ as follows:
\begin{itemize}
\item $E $ is given by the points $z_1$ in $ B \cap \partial C$ such that $T_{z_1}^{(i)}\rrr{\in U_*}$ and there exists $z_0\in C $ with $|z_0-z_1|\leq 1-2\a$ and $T_{z_0}^{(i)}\rrr{\in U_*}$;
\item $F$ is given by the points $z\in B'$ such that there exists a path $(z_2,\dots,z_k)$ inside $\bbG$ where $z_k=z$, all points $z_2,\cdots,z_k$ are in $B'\setminus (C\cup\partial  C)$ and \rrr{$|z_1-z_2|\leq h_*(A_{z_2})-2\a$} for some $z_1\in E$.
\end{itemize}
To stress the dependence from $C,B, B', i$, we will also write   $E\bigl[C,B,i\bigr] $ and  $F\bigl[C,B,B',i\bigr]$.
  \end{Definition} 
The proof of the next two lemmas is given in Section \ref{patroclo}.
  \begin{Lemma}\label{ciak2011} Let
  \rrr{$E=E\bigl[C,B,i\bigr] $} and  $F=F\bigl[C,B,B',i\bigr]$ be as in Definition \ref{def_triade}. Let $\hat E,\hat F \subset \ezd$. 
  \begin{itemize}
  \item[(i)] \rui{If the event $\{E=\hat E\}\cap \{F=\hat F\}$  \kui{occurs}, then 
      $\hat E,\hat F$    satisfy}     \be\label{chip}
    \hat E \cap \hat F=\emptyset \,, \qquad 
     \hat E\subset \bigl(  B \cap \partial C\bigr)\,, \qquad \hat F\subset B'\setminus (C\cup\partial  C)\,.
     \en 
     \item[(ii)]
     If $\hat E,\hat F$   satisfy  \eqref{chip}, then  the event $\{E=\hat E\}\cap \{F=\hat F\}$ 
     belongs to the $\s$--algebra generated by 
    \begin{itemize}
    \item[$\bullet$] $T^{(i)}_z$ with $z\in  \bigl( B \cap \partial C \bigr) \cup D$, where      \[D:=\left\{  x\in C\,:\,  \exists y\in B \cap \partial C
\text{ with } |x-y| \leq 1-2\a\right\}\,;\]
 \item[$\bullet$] $A_z$ with $z$ belonging to  some  of the following sets:
  \[ \hat F\,,  \quad  \bigl( B'\setminus (C\cup\partial  C) \bigr) \cap \partial \hat F\,,  \quad  \bigl( B'\setminus (C\cup\partial  C) \bigr) \cap     \partial \hat E\,.
\]    \end{itemize}
\item[(iii)]
As a consequence, given $R\subset \ezd$, the event $\{E\cup F= R\}$ belongs to the $\s$--algebra generated by $\{ T^{(i)}_z\,:\, z\in  \bigl( B \cap \partial C \bigr) \cup D\} \cup \{A_z\,:\, z \in R\cup \partial R\}$.
\end{itemize}
  \end{Lemma}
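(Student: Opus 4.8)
The plan is to prove the three parts in order, since (iii) follows readily from (i) and (ii).

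For part (i), the content is purely set-theoretic and follows directly from Definition \ref{def_triade}. The inclusion $\hat E\subset B\cap\partial C$ is immediate because the very first condition in the definition of $E$ restricts $z_1$ to lie in $B\cap\partial C$. Similarly $\hat F\subset B'\setminus(C\cup\partial C)$: each point $z$ of $F$ equals the endpoint $z_k$ of a path $(z_2,\dots,z_k)$ all of whose vertices, including $z_k$, are required to lie in $B'\setminus(C\cup\partial C)$; note the degenerate case $k=2$, i.e.\ $z=z_2$, is allowed, and is still covered. Finally $\hat E\cap\hat F=\emptyset$: any point of $\hat E$ lies in $\partial C$, while any point of $\hat F$ lies outside $C\cup\partial C$, and these are disjoint. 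I would state these three observations in a line or two each.

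For part (ii), I would argue that the indicator of $\{E=\hat E\}\cap\{F=\hat F\}$ is measurable with respect to the listed variables by exhibiting, for each configuration in the prescribed $\sigma$-algebra, how membership of a point in $E$ or in $F$ can be decided. For $E$: given the values $T^{(i)}_z$ for $z\in(B\cap\partial C)\cup D$, one can check for each $z_1\in B\cap\partial C$ whether $T^{(i)}_{z_1}\in U_*$ and whether some neighbour $z_0\in C$ with $|z_0-z_1|\le 1-2\alpha$ has $T^{(i)}_{z_0}\in U_*$ — and all such $z_0$ lie in $D$ by the very definition of $D$. Hence $\{E=\hat E\}$ is determined by the $T^{(i)}$-variables listed. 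For $F$: knowing $E=\hat E$, the set $F$ is the set of endpoints reachable from $\hat E$ by $\bbG$-paths inside $B'\setminus(C\cup\partial C)$ with the extra first-step constraint $|z_1-z_2|\le h_*(A_{z_2})-2\alpha$. On the event $\{F=\hat F\}$, deciding $F=\hat F$ requires (a) checking that every point of $\hat F$ is indeed reachable — which only involves the edges of $\bbG$ among vertices of $\hat F$ (so the variables $A_z$ for $z\in\hat F$, since an edge $\{z,z'\}$ of $\bbG$ is determined by $A_z,A_{z'}$), together with the first-step condition involving $A_{z_2}$ for the entry point $z_2\in\hat F$ adjacent to $\hat E$; and (b) checking that no point outside $\hat F$ is reachable — this is where the boundary sets enter: a candidate extension would be a vertex $y$ in $\bigl(B'\setminus(C\cup\partial C)\bigr)$ adjacent in $\bbG$ either to some point of $\hat F$ (i.e.\ $y\in\partial\hat F$) or directly reachable as a first step from $\hat E$ (i.e.\ $y\in\partial\hat E$, via the $h_*$-constraint), and ruling such $y$ out requires only $A_y$ together with the already-available $A_z$ for $z\in\hat F$. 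Collecting these, the whole event lies in the stated $\sigma$-algebra. The delicate point — and the step I expect to require the most care — is the ``no larger component'' half: one must be sure that to certify $\hat F$ is maximal it suffices to inspect $A$-variables on $\hat F$ and on the one-step $\bbG$-neighbourhoods $\partial\hat F$ and $\partial\hat E$ (intersected with the allowed region), rather than probing deeper into the configuration; this works precisely because a path leaving $\hat F$ must have a first vertex outside $\hat F$, and that first vertex is a $\bbG$-neighbour of $\hat F$ (hence in $\partial\hat F$) or, if it is the entry vertex $z_2$ itself, a point satisfying the $h_*$-condition relative to $\hat E$ (hence in $\partial\hat E$), so non-extendability is a local condition. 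I would also note that $D$ here is literally the set appearing in the second bullet, so the measurability statement for $E$ uses exactly those indices.

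For part (iii), fix $R\subset\ezd$ and write $\{E\cup F=R\}$ as the disjoint union over all pairs $(\hat E,\hat F)$ with $\hat E\cup\hat F=R$ satisfying \eqref{chip} of the events $\{E=\hat E\}\cap\{F=\hat F\}$ — by part (i) only such pairs contribute. By part (ii) each of these events lies in the $\sigma$-algebra generated by $\{T^{(i)}_z: z\in(B\cap\partial C)\cup D\}\cup\{A_z: z\in\hat F\cup\partial\hat F\cup\partial\hat E\}$; since $\hat E\cup\hat F=R$ and $\partial$ is monotone, $\hat F\cup\partial\hat F\cup\partial\hat E\subset R\cup\partial R$, so each contributing event lies in the $\sigma$-algebra generated by $\{T^{(i)}_z: z\in(B\cap\partial C)\cup D\}\cup\{A_z: z\in R\cup\partial R\}$. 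A finite (indeed, at most countable, and here finite since $R$ is finite) union of events in this $\sigma$-algebra stays in it, which gives the claim. I would close by remarking that this is the form in which the lemma will be used in the next section, to guarantee the conditional independence needed to iterate the construction.
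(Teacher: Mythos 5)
Your proof is correct and takes essentially the same approach as the paper: for part (ii) you decompose $\{F=\hat F\}$, given $\{E=\hat E\}$, into "every point of $\hat F$ is reachable" and "no point outside $\hat F$ is reachable," with the latter localized to the first-step boundary $\partial\hat E$ and the onward boundary $\partial\hat F$, which is exactly the paper's event $W=(a)\cap(b)\cap(c)$ written out informally. Parts (i) and (iii) match the paper's treatment (the paper leaves (i) as trivial and (iii) as an immediate consequence, which you flesh out correctly).
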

    
\begin{Lemma}\label{piccolino} 
Given sets  $C, B, B'\subset \ezd $  and  an index $i\in \{1,2,\dots, K\}$, we  define 
$E:= \rrr{E\bigl[C,B,i\bigr]}$ and $F:=F\bigl[C,B,B',i\bigr]$.
If $C\subset \rrr{\bbV_+}$ is  connected in  the graph   $\rrr{\bbG_+=(\bbV_+, \bbE_+)}$, then the  set $C':=C\cup E\cup F$
 is contained in $\rrr{\bbV_+}$ and is connected in  the graph   $\rrr{\bbG_+}$.
\end{Lemma}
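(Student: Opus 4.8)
\textbf{Proof plan for Lemma \ref{piccolino}.}

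The plan is to verify, edge by edge, that every new vertex added in forming $C'=C\cup E\cup F$ can be connected to $C$ by a path lying in $\bbG_+$, using the tolerances built into properties like (P6)--(P8) of Lemma \ref{pierpilori}. First I would recall that $\bbG_+$ has a generous edge threshold: $\{z,z'\}\in\bbE_+$ as soon as $|z-z'|\leq h(A^{\rm au}_z,A^{\rm au}_{z'})-\a$, whereas $\bbG$ uses $-2\a$, and moreover $A^{\rm au}_z\leq A_z$ (resp.\ $\geq$) together with Assumption (A5) gives $h(A^{\rm au}_z,A^{\rm au}_{z'})\geq h(A_z,A_{z'})$; hence every edge of $\bbG$ is an edge of $\bbG_+$, i.e.\ $\bbG\subset\bbG_+$ on the common vertex set. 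Also, since $C\subset\bbV_+$, to show $C'\subset\bbV_+$ it suffices to show $E\cup F\subset\bbV_+$, i.e.\ that each such point $z$ has $A^{\rm au}_z\in\bbR$; for $z\in F$ this is immediate because $z$ lies on a $\bbG$-path so $A_z\in\bbR$ hence $A^{\rm au}_z\in\bbR$, and for $z_1\in E$ we have $T^{(i)}_{z_1}\in U_*\subset\D=\mathrm{supp}(\nu)\subset\bbR$, so $A^{\rm au}_{z_1}\leq T^{(i)}_{z_1}<+\infty$ (resp.\ $\geq$, finite), giving $A^{\rm au}_{z_1}\in\bbR$.

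The core of the argument is connectivity. Take any $w\in E\cup F$; I must exhibit a path in $\bbG_+$ from $w$ to $C$. If $w\in F$, by Definition \ref{def_triade} there is a $\bbG$-path $(z_2,\dots,z_k)$ with $z_k=w$, entirely in $B'\setminus(C\cup\partial C)$, and a point $z_1\in E$ with $|z_1-z_2|\leq h_*(A_{z_2})-2\a$. The subpath $z_2,\dots,z_k$ is already a $\bbG_+$-path since $\bbG\subset\bbG_+$. It remains to attach $z_1$: I must check $\{z_1,z_2\}\in\bbE_+$, and then attach $z_1$ to $C$. For the first, note $h_*(A_{z_2})=\sup_{b\in\D}h(A_{z_2},b)$ and, since $T^{(i)}_{z_1}\in U_*=U_*(\a/2)$, Assumption (A4) gives $h(T^{(i)}_{z_1},A_{z_2})\geq h_*(A_{z_2})-\a/2\geq h_*(A_{z_2})-\a$ — wait, I need the bound against $A^{\rm au}_{z_1}$, not $T^{(i)}_{z_1}$; here the case split $h\searrow$ vs.\ $h\nearrow$ enters. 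In the $h\searrow$ case $A^{\rm au}_{z_1}\leq T^{(i)}_{z_1}$ and $h(\cdot,A_{z_2})$ is weakly decreasing, so $h(A^{\rm au}_{z_1},A^{\rm au}_{z_2})\geq h(A^{\rm au}_{z_1},A_{z_2})\geq h(T^{(i)}_{z_1},A_{z_2})\geq h_*(A_{z_2})-\a/2$; combined with $|z_1-z_2|\leq h_*(A_{z_2})-2\a\leq h(A^{\rm au}_{z_1},A^{\rm au}_{z_2})-\a$ this yields $\{z_1,z_2\}\in\bbE_+$. The $h\nearrow$ case is symmetric. So it remains to connect $z_1$ to $C$, which is the same task as for a point of $E$, treated next.

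For $w=z_1\in E$ (or the $z_1$ just produced), Definition \ref{def_triade} supplies $z_0\in C$ with $|z_0-z_1|\leq 1-2\a$ and $T^{(i)}_{z_0}\in U_*$, $T^{(i)}_{z_1}\in U_*$. By \eqref{mango}, for any $a,\tilde a\in U_*$ we have $h(a,\tilde a)\geq 1-\a$; arguing as above via the case split ($A^{\rm au}_{z_0}\leq T^{(i)}_{z_0}$, $A^{\rm au}_{z_1}\leq T^{(i)}_{z_1}$ when $h\searrow$, monotonicity of $h$), we get $h(A^{\rm au}_{z_0},A^{\rm au}_{z_1})\geq h(T^{(i)}_{z_0},T^{(i)}_{z_1})\geq 1-\a$, hence $|z_0-z_1|\leq 1-2\a\leq h(A^{\rm au}_{z_0},A^{\rm au}_{z_1})-\a$, i.e.\ $\{z_0,z_1\}\in\bbE_+$. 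Thus $z_1$ is adjacent in $\bbG_+$ to the point $z_0\in C$. Since $C$ is connected in $\bbG_+$ by hypothesis, every vertex of $C$ is joined to every other within $\bbG_+$; chaining the above, each $w\in E\cup F$ is joined within $\bbG_+$ to $C$, and any two such $w,w'$ are joined through $C$. Therefore $C'=C\cup E\cup F\subset\bbV_+$ is connected in $\bbG_+$, completing the proof. The main obstacle is the bookkeeping of the two monotonicity regimes $h\searrow$ and $h\nearrow$ when passing from the $T^{(i)}$-values and $A$-values appearing in Definition \ref{def_triade} to the augmented values $A^{\rm au}$ that actually define $\bbG_+$; once one observes that in either regime $A^{\rm au}_z$ is comparable to the relevant $T^{(i)}_z$ or $A_z$ in the direction that makes $h$ only increase, all the inequalities go through with the $\a$ versus $2\a$ slack.
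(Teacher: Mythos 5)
Your proof is correct and follows essentially the same route as the paper's: reduce to showing that the two ``bridge'' edges $\{z_0,z_1\}$ (from $C$ into $E$) and $\{z_1,z_2\}$ (from $E$ into $F$) lie in $\bbE_+$, which you get from \eqref{mango} and Assumption (A4) respectively together with the monotonicity (A5) to pass from $T^{(i)}$-values and $A$-values to $A^{\rm au}$-values, exploiting the $\a$-versus-$2\a$ slack between $\bbG$ and $\bbG_+$. The paper's proof is more terse (stating (i) and (ii) as the two sufficient estimates and computing the single-line inequality chains \eqref{lorenzo18} and \eqref{pierpaolo18}), while you spell out the chaining-through-$C$ argument and the $h\searrow$/$h\nearrow$ case split; the mathematical content is the same.
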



\section{Renormalization scheme}\label{moto_GP}
  \rrr{As in Section \ref{sec_ginepro},   we  set   $N:=n+m+\e$.
From  now \rui{on}  $\e'$   is a fixed  constant in $(0,1)$ such that $1-6\e'\geq 3/4>p_c(2)$, $p_c(2)$ being the critical probability for Bernoulli site percolation on $\bbZ^2$. Moreover, we choose $m,n$ as in Lemma \ref{pierpilori}. }

\rrr{Recall Definition \ref{sambinaA} of $T(n)$, $T(m,n)$. We  define
\be \label{tittina}
T^*(m,n):= f\left(  T(m,n)\right)\text{ and } T^*(n):= f\left( T(n) \right)\,,
\en where $f:\bbR^d\to \bbR^d$ is the  isometry  $f(x_1,x_2, \dots, x_d) := (x_1,- x_2, \dots, -x_d)$ (see Figure \ref{messicano1}).
 Given $a \in \bbR  ^d$, we define  $g(\cdot|a) :  \bbR  ^d \to  \bbR  ^d$ as the isometry
\be \label{gigi} g(x|a):= ( x_1, -\text{sgn}(a_2) x_2, \dots, - \text{sgn}(a_d) x_d )\,,
\en where $\text{sgn}(\cdot)$ is the sign function, with the convention that $\text{sgn}(0)=+1$. Note that $f(x)=g(x|0)$.
}

Let $e_1, e_2, \dots, e_d$ be the canonical basis of $\bbR^d$.  
We denote by $L_1, L_2,L_3,L_4$ the isometries of $\bbR^d$ given respectively by  $\mathds{1}, \theta, \theta^2, \theta ^3$, where $\mathds{1}$ is the identity and $\theta$ is the unique rotation such that 
$\theta(e_1)= e_2$, $\theta (e_2)=-e_1$, $\theta (e_i)=e_i$ for all $i=3,\dots,d$.
We define $B_0'\subset \ezd$ as 
\begin{equation}\label{gommina}
B_0':= B(n) \cup \left( \cup_{j=1}^{4} L_j \bigl( T(m,n) \bigr) \right)\,.
\end{equation}
\rrr{Hence, for $d=2$, $B_0' $ is the region of $\ezd$ given by the largest square  and the four peripheral  rectangles  in Figure \ref{fig_C1_2}--(left).}
For $j=1,2,3,4$ we call $K^{(j)} (m,n)$ the random set of points defined  similarly to  $K  (m,n)$ 
(cf. \rui{Definition \ref{sambinaA}})
but with $T(m,n)$  and $T(n)$  replaced by $ L_j \bigl( T(m,n) \bigr) $ and $L_j\bigl( T(n)\bigr)$, respectively.

\begin{Definition}[\rui{Set $C_1$ and success-events $S_0$, $S_1$}]\label{gandalf}
We define $C_1$ as the set of points $x \in B_0'$  such that \[
\{x\} \leftrightarrow B(m) \text{ in $B_0'$ for }  \bbG  \,.\]
Furthermore, we define  the success-events $S_0$ and $S_1$ as
\begin{align*}
& S_0:=\{\,\text{$B(m)$ is a seed}\, \}\,,\\
&  S_1:=\{\,\text{$C_1$ contains a point of $ K^{(j)}(m,n)$
for each $j=1,2,3,4$}\,\}\,.
\end{align*}
\end{Definition}

    \begin{Definition}[\rui{Occupation of the origin}] \label{0occ}
  We say that  the origin $0\in \rrr{\ezd}$ is occupied if the  event  \rrr{$S_0\cap S_1 $} takes place.
\end{Definition}

We refer to Figure \ref{fig_C1_2}--(left) for an example of the set $C_1$ when $\rrr{S_0\cap S_1}$ occurs. We note that the event $S_0$ implies that $B(m) \subset \bbV$, hence $B(m) \subset C_1 $.

\begin{figure}
\includegraphics[scale=0.50]{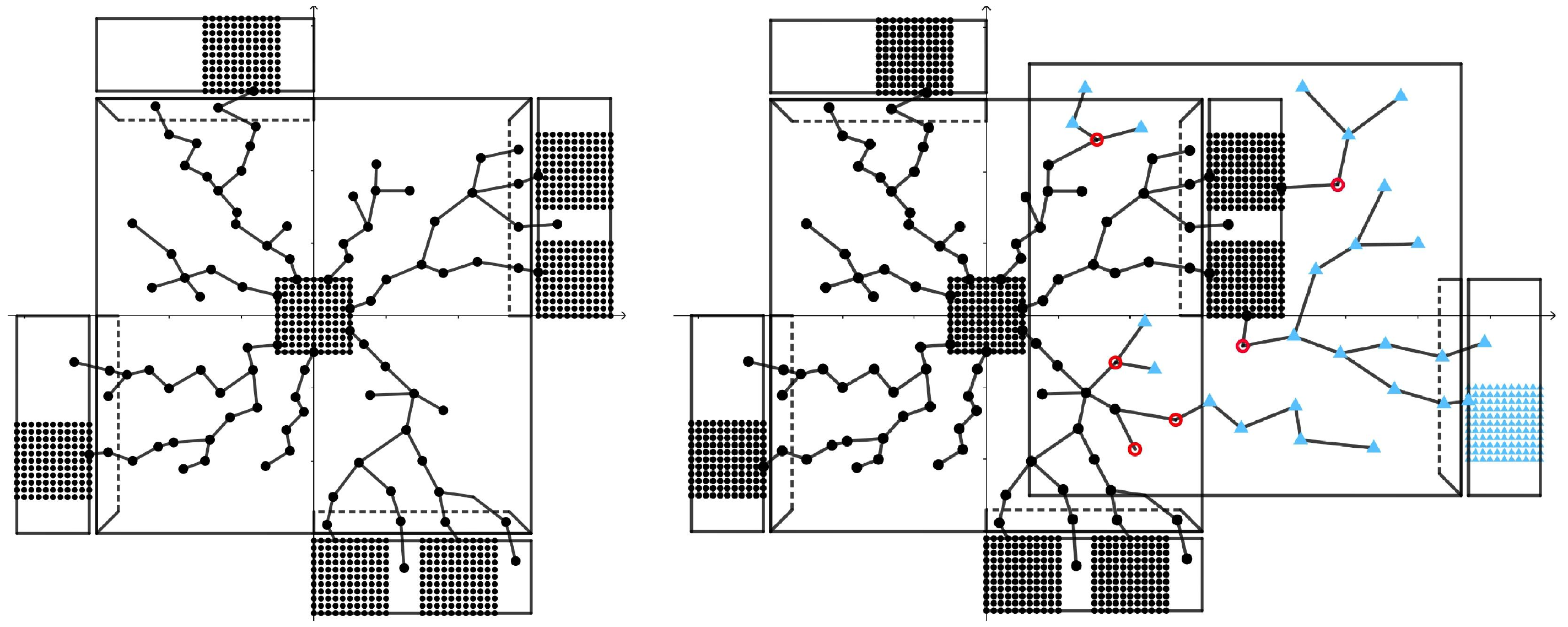}
\captionsetup{width=.9\linewidth}
\caption{Left: the set $C_1$ when  \rrr{$S_0\cap S_1$} occurs. Right: The set $C_2$ when  \rrr{$S_0\cap S_1\cap S_2$} occurs.  Points in $C_1$ correspond to circles, while points in $C_2\setminus C_1$ correspond to \rui{red rings if in $E_1$ and blue triangles if in $F_1$.}}
\label{fig_C1_2}
\end{figure}

\begin{Remark}\label{gomitolo1} If  the event $S_0$ occurs \rrr{(e.g. if the origin is occupied)}, then $C_1$ is a connected subset of $\bbG$ (and therefore of $\rrr{\bbG_+}$) by Lemma \ref{ironman}.
\end{Remark}

\rrr{When the origin is occupied}, for $i=1,2,3,4$ we define    \rrr{$c^{(i)}$} as  the minimal (w.r.t. the lexicographic order) point $z$   in $\ezd$ such that $B(z,m)$ is a seed contained in $C_1\cap L_i\bigl( T(m,n)\bigr)$.  We point out that such a seed exists by Lemma \ref{ironman} and the definition of $S_1$.   
 It is simple to check that, when $S_1$ takes place, 
 \begin{equation}\label{povo}
\rrr{ c^{(1)} _1}=N \text{ and }\rrr{c^{(1)} _j}\in [m,n-m] \text{ for }2\leq j \leq d\,,
 \end{equation}
  where \rrr{$c^{(1)} _j$}  denotes the $j$--th coordinate of \rrr{$c^{(1)}$}. Similar formulas hold for \rrr{$c^{(i)}$}, $i=2,3,4$. 
  \rrr{For later use, we set}
  \be \rrr{b^{(1)}:= c^{(1)}\,.} \en
 \begin{Proposition}\label{prop_occ_origin} 
 \rrr{It holds  $\bbP( 0\text{ is occupied}\, |\,S_0)   \geq  1-4\e'$.}
\end{Proposition}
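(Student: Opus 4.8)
The plan is to bound from below $\bbP(S_1\mid S_0)$, since by Definition \ref{0occ} we have $\{0\text{ is occupied}\}=S_0\cap S_1$, and hence $\bbP(0\text{ is occupied}\mid S_0)=\bbP(S_1\mid S_0)$. Recall that $S_1$ asks that the set $C_1$ (the $\bbG$-cluster of $B(m)$ inside $B_0'$) meets $K^{(j)}(m,n)$ for all four values $j=1,2,3,4$. The natural strategy is a union-bound over the four failure events: writing $S_1^{(j)}:=\{C_1\cap K^{(j)}(m,n)\neq\emptyset\}$, we have $S_1=\bigcap_{j=1}^4 S_1^{(j)}$, so
\be
\bbP(S_1\mid S_0)\ \geq\ 1-\sum_{j=1}^{4}\bbP\bigl((S_1^{(j)})^c\mid S_0\bigr),
\en
and it suffices to show $\bbP((S_1^{(j)})^c\mid S_0)\leq\e'$ for each $j$, which by symmetry (the $L_j$ are isometries permuting the four peripheral tubes and fixing $B(n)$) reduces to the single estimate for $j=1$, i.e. for the tube $T(m,n)$ in the $+e_1$ direction.

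For $j=1$, the key observation is that conditioning on $S_0$ only fixes the marks $(A_x)_{x\in B(m)}$ to lie in $U_*$, and $B(m)\subset B(n)$, while the event $S_1^{(1)}$ — namely that the $\bbG$-cluster of $B(m)$ inside $B_0'$ reaches $K^{(1)}(m,n)=K(m,n)$ — is monotone (increasing) once we know $B(m)$ is a seed, because a seed is automatically $\bbG$-connected by Lemma \ref{ironman} and is a subset of $\bbV$. Concretely, $S_0\cap\{B(m)\leftrightarrow K(m,n)\text{ in }B(n)\text{ for }\bbG\}\subset S_0\cap S_1^{(1)}$, since a $\bbG$-path inside $B(n)\subset B_0'$ from $B(m)$ to a point of $K(m,n)$ places that point into $C_1$. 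Therefore
\be
\bbP\bigl(S_1^{(1)}\mid S_0\bigr)\ \geq\ \bbP\bigl(B(m)\leftrightarrow K(m,n)\text{ in }B(n)\text{ for }\bbG\ \bigm|\ S_0\bigr).
\en
Now I would invoke Proposition \ref{cinquina}, which gives $m,n$ (with $m>2$, $2m<n$, $2m\mid n$) such that the unconditioned probability of $\{B(m)\leftrightarrow K(m,n)\text{ in }B(n)\text{ for }\bbG\}$ exceeds $1-\eta$ for any prescribed $\eta\in(0,1)$; I would apply it with $\eta$ small compared to $\e'$ (and also consistent with the choice of $m,n$ demanded by Lemma \ref{pierpilori}, see Section \ref{francia}). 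To pass from the unconditioned to the conditioned bound, I would use the FKG inequality on $(\Theta,\bbP)$ with the partial order $\preceq$: both $S_0=\{A_x\in U_*\ \forall x\in B(m)\}$ and $\{B(m)\leftrightarrow K(m,n)\text{ in }B(n)\text{ for }\bbG\}$ are increasing events for $\preceq$ (here one uses Assumption (A5) together with the fact that, when $h\searrow$, smaller marks mean $U_*$ is "up" in the order and mean more edges of $\bbG$; the event that marks lie in $U_*$ is the up-set $\{A_x\leq\text{something}\}$-type event — more precisely $U_*=U_*(\a/2)$ can be taken to be a half-line in the appropriate direction, or one replaces $S_0$ by a cofinal increasing sub-event of the same $\bbP$-probability). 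FKG then yields $\bbP(A\cap B)\geq\bbP(A)\bbP(B)$ but I actually need $\bbP(B\mid A)\geq\bbP(B)$, which is exactly FKG: $\bbP(B\mid S_0)=\bbP(B\cap S_0)/\bbP(S_0)\geq\bbP(B)\bbP(S_0)/\bbP(S_0)=\bbP(B)>1-\eta\geq 1-\e'$.

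The main obstacle I anticipate is the monotonicity/FKG bookkeeping in the second-to-last step: one must check carefully that $S_0$ is (or can be replaced by) an increasing event for $\preceq$ — in the $h\searrow$ case the marks being small is "favorable," so $\{A_x\in U_*\}$ should be arranged to be an up-set; if $U_*$ is not literally an up-set one can shrink it (replacing $U_*(\a/2)$ by a sub-half-line of positive $\nu$-measure guaranteed by (A4)) so that $S_0$ becomes increasing without changing the logic — and symmetrically in the $h\nearrow$ case. A secondary point is making sure the $m,n$ furnished by Proposition \ref{cinquina} can be taken to simultaneously satisfy the requirements of Lemma \ref{pierpilori} for the same $\e'$; this is the content of the "choice of $m,n$" postponed to Section \ref{francia}, and here one just needs $\eta$ in Proposition \ref{cinquina} at most $\e'$. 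Everything else — the union bound over $j=1,\dots,4$, the symmetry reduction via the isometries $L_j$, and the inclusion of events — is routine.
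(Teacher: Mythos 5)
Your proposal is correct and takes essentially the same route as the paper's proof: both combine a union bound over the four directions $j=1,\dots,4$ with the FKG inequality (using that the relevant events are increasing for $\preceq$) and Proposition~\ref{cinquina} (via Remark~\ref{caffeina}). The paper applies FKG once up front, obtaining $\bbP(S_1\mid S_0)\geq\bbP(S_1)$, and then union-bounds $\bbP(S_1^c)$ over the $W_j$'s, whereas you union-bound first and apply FKG term-by-term; this is an inconsequential reordering. One remark worth keeping: you correctly flag that $S_0=\{A_x\in U_*\ \forall x\in B(m)\}$ must be increasing for $\preceq$, which requires $U_*$ to be a down-ray (if $h\searrow$) or up-ray (if $h\nearrow$) in $\mathrm{supp}(\nu)$; the paper takes this for granted. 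Your proposed fix of \emph{shrinking} $U_*$ to a sub-half-line is not the cleanest repair, since a positive-measure set need not contain a positive-measure half-line; the natural move is to \emph{enlarge} $U_*(\d)$ to the maximal set $\{a\in\mathrm{supp}(\nu):h(a,b)\geq\sup_{a'}h(a',b)-\d\ \forall b\}$, which is automatically monotone in the required sense by Assumption~(A5) and still satisfies \eqref{indigestione}, so one may assume WLOG that $U_*$ has this form.
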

We postpone the proof of the above proposition to Section \ref{puffo1}. \rrr{If the origin is not occupied, then we stop our construction. Hence, from now on we assume that $0$ is occupied without further mention. We fix a unitary vector, that we take equal to  $e_1$ without loss of generality, and we explain how we attempt to extend $C_1$ in the direction $e_1$.  In order to shorten the presentation, we will define geometric objects only in the successful cases relevant to continue the construction (in the other cases, the definition can be chosen arbitrarily). Figure \ref{aquile} will  be useful to locate objects. }

 Below, for \rrr{$i=2,\dots, 7 $}, we will iteratively define points $b^{(i)}$.  Moreover,
 for \rrr{$i=1,2,3$},
    we    will iteratively define sets $T_i(n)$ and $T_i(m,n)$ obtained from $T(n)$ and $T(m,n)$ by an $i$--parametrized orthogonal map. \rui{Apart from} the case \rrr{$i=4$}, many objects will be defined similarly.  Hence, we isolate some special definitions to which  we will refer in what follows. We stress that we collect these generic definitions below, but we will apply them  only  when describing the construction step by step in the next subsections. \rrr{Recall Definition \ref{def_triade}}.
    
\begin{iDefinition}[\rui{Sets $K_i(m,n)$,  $B_i' $}] \label{coca?} Given $b^{(i)}$, $T_i(n)$ and $T_i(m,n)$, 
we define   $K _i  (m,n)$ as  the set of points   $x \in   b^{(i)}+T_i(n) $ which are   \rrr{adjacent} inside $\bbG$   to  a seed contained in   $b^{(i)}+ T_i (m,n)$.  Moreover,  we define $B_i' := b^{(i)}+ \bigl( B(n)\cup T_i(m,n)\bigr)$.
\end{iDefinition}


\begin{iDefinition}[\rui{Sets $E_i,F_i,C_{i+1}$}] \label{legoland}
We set
\begin{align*}
 & E_i:=\rrr{E\bigl[ C_i,\, B\bigl( b^{(i)}, n\bigr), i]}\,, \\
 &  F_i:=F\bigl[ C_i,\,   B\bigl( b^{(i)}, n\bigr),B_i' ,i]\,,\\
 & C_{i+1}:=C_i\cup E_i \cup F_i\,.
  \end{align*}
\end{iDefinition} 

\begin{iDefinition}[\rui{Success-event $S_{i+1}$}] \label{ara}
We call $S_{i+1}$ the success-event that $C_{i+1}$  contains at least one
   vertex  in $K_i(m,n)$.
   \end{iDefinition}

\begin{iDefinition}[\rui{Property $\mathfrak{p}_i$}] \label{lattino}
We say that property $\mathfrak{p}_i$ is satisfied if  the sets 
$C_i \cup \partial C_i$ and $b^{(i) }+ \bigl (T_i(n) \cup T_i(m,n) \bigr)$ are disjoint.
\end{iDefinition}
 In several steps below we will claim  without further comments that property $\mathfrak{p}_i$ is satisfied. This property will correspond to  the second property in  \eqref{mare100} in the applications of Lemma \ref{pierpilori} in Section \ref{puffo1}, which (when not immediate)   will be checked in  Section \ref{puffo1} and   Appendixes  \ref{natale45}, \ref{natale46}. \begin{Remark}\label{aiko}
If $S_{i+1}$ occurs, then  $C_{i+1}$ contains a point $x \in   b^{(i)}+T_i(n) $ which is   \rrr{adjacent} inside $\bbG$   to  a seed $B(z,m)$ contained in   $b^{(i)}+ T_i (m,n) \subset B_i' $.
 Let us suppose  that  also  property $\mathfrak{p}_i$ in Definition \ref{lattino} is satisfied. Then $(C_i \cup E_i )\subset (C_i \cup \partial C_i)$ does not intersect  $b^{(i)}+T_i(n) $, thus implying that $x\in F_i$ \rrr{($E_i$ and $F_i$ were defined  in Definition \ref{legoland})}. Since the above seed $B(z,m)$ is contained in $b^{(i)} + T_i(m,n)$, which is contained in $B_i' \setminus (C_i\cup \partial C_i)$ due to property $\mathfrak{p}_i$, by Lemma \ref{ironman} and Definition \ref{def_triade} we conclude that $F_i \subset C_{i+1}$ contains the above seed $B(z,m)$.
\end{Remark}
  We now continue with the construction of increasing clusters and success-events. 
  
  \begin{figure}
\begin{center}
\includegraphics[scale=0.42]{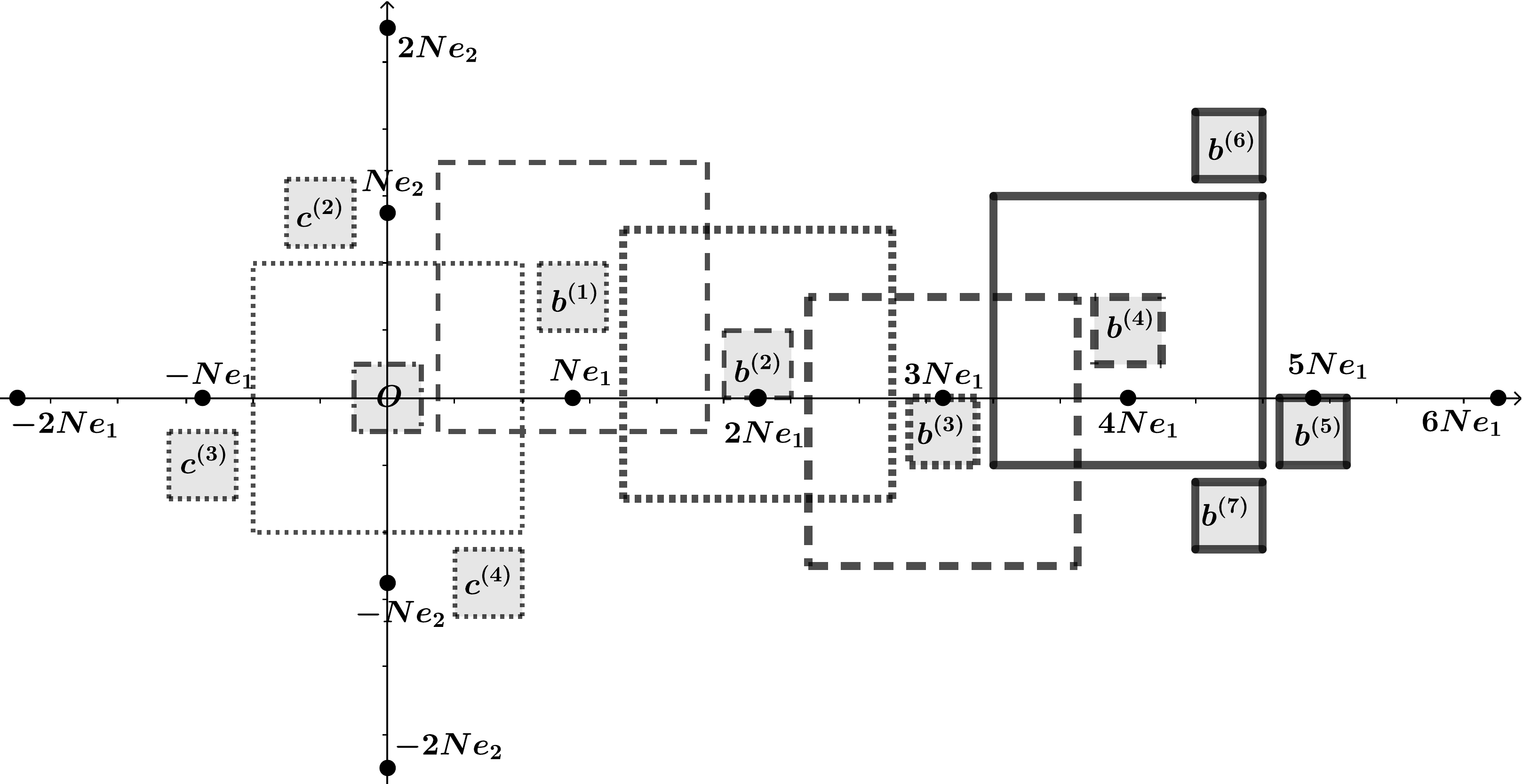}
\end{center}
\captionsetup{width=.9\linewidth}
\caption{Colored small boxes are the   seeds  $B(m)$ and $B(b^{(i)}, m)$, while bigger boxes are given by  $B(n)$ and $B(b^{(i)}, n)$. }
\label{aquile}
\end{figure}

 \subsection{\rrr{Case $i=1$}}
\rrr{
We define $T_1(n) := T^*(n) = g\bigl( T(n)| b^{(1)}\bigr) $ and $T_1 (m,n)= T^*(m,n)= g\bigl( T(m,n)|b^{(1)}\bigr) $ (cf. \eqref{tittina} and \eqref{gigi}).
 We apply  (i)--Definition   \ref{coca?},   (i)--Definition \ref{legoland}, (i)--Definition \ref{ara} and (i)--Definition \ref{lattino} for $i=1$. In particular, this defines  the sets  $K_1(m,n), B_1', E_1$, $F_1$, $C_2$  and the success-event $S_2$. See Figure \ref{fig_C1_2}--(right)}.

When  \rrr{$S_0\cap S_1\cap S_2$} occurs,   the set  \rrr{$C_{2}$ intersects the box $B(2Ne_1, N)$ as we now show. Indeed,   one can  prove  property $\mathfrak{p}_1$ using \eqref{povo}. Hence, by Remark \ref{aiko},  the  event $S_0\cap S_1\cap S_2  $ implies that $F_1 \subset C_{2}$ contains   a seed inside  $ b^{(1)}+ T^*(m,n)$  (see Figure \ref{fig_C1_2}--(right)). One can easily check that 
 $  b^{(1)}+  T^*(m,n)  \subset B(2N e_1,N)$. To this aim we observe that,  by \eqref{povo}, if $x\in  b^{(1)}+  T^*(m,n)  $,} then 
$x_1\in[2N-m,2N+m]\subset2N+[-N,N]$ and 
$x_j\in  [b_j^{(1)}-n, b_j^{(1)}]\subset[m-n,n-m]\subset[-N,N]$ for $2\leq j \leq d$.

 We  define \rrr{$b^{(2)}$} as the minimal point $z\in\e\bbZ^d$ such that $B(z,m)$ is a seed contained in $\rrr{C_{2}}\cap \bigl( b^{(1)}+T^*(m,n)\bigr)$.   By the above discussion, $b^{(1)}+T^*(m,n) \subset B(2N e_1, N)$.
By \eqref{povo} and since $\rrr{b^{(2)}_j}\in 
  [b^{(1)}_j-n,b_j^{(1)}]$ for $j\not=1$, we get   for \rrr{$i=2$}
 \begin{equation}\label{povoino}
 b^{(i)} _1=\rrr{i N}, \qquad b^{(i)}_j\in 
  [-n+m, n-m]
  \text{ for  }j\not =1\,.
 \end{equation}

 \subsection{Case \rrr{$i=2$}} We assume that  $S_0\cap S_1  \cap S_2$ occurs. 

We set 
$ \rrr{T_2}(m,n):=g\left(T(m,n)| \rrr{b^{(2)}} \right) $  and $\rrr{T_2} (n):=g\left(T(n)| \rrr{b^{(2)}}\right)$. 
   We  apply  (i)--Definition 
 \ref{coca?},   (i)--Definition \ref{legoland}, (i)--Definition \ref{ara}   and (i)--Definition \ref{lattino} for \rrr{$i=2$.} 
 In particular this defines  \rrr{$K_2(m,n), B_2',E_2,F_2, C_3, S_3$}. It is simple to check that property \rrr{$\mathfrak{p}_2$} is satisfied. If also \rrr{$S_3$} occurs, by Remark \ref{aiko} we can define 
 \rrr{$b^{(3)}$}  as the minimal point in $\e\bbZ^d$ such that $B(z,m)$ is a seed contained in \rrr{$C_3\cap\left( b^{(2)}+T_2 (m,n)\right)$}.

Let us localize some objects. Due to Claim \ref{locus2-3} in Appendix \ref{app_locus},  
 $b^{(\rrr{2})}+T_{\rrr{2}}(m,n)\subset B(3Ne_1,N)$ (see Figure~\ref{aquile}).  In particular, if $S_{\rrr{3}}$ occurs, then $C_{\rrr{3}} \cap B(3Ne_1,N)$ contains the above seed $B(z,m)$. If $S_{\rrr{3}}$ occurs,  due to \eqref{povoino} with \rrr{$i=2$,} for $j\not =1$ we have  
 \be\label{matiz}
  \begin{cases}
b_j^{(\rrr{3})}\in [b^{(\rrr{2})}_j-n+m,b_j^{(\rrr{2})}-m]\subset [-n+m,n-2m] & \text{  if $b^{(\rrr{2})}_j\geq 0$}\,,\\
b_j^{(\rrr{3})}\in [b^{(\rrr{2})}_j+m,b_j^{(\rrr{2})}+n-m]\subset [-n+2m,n-m], & \text{ if  $b^{(\rrr{2})}_j< 0$}\,.
\end{cases}
\en
Due to the above bounds and since $b^{(\rrr{3})}_1=3 N$, we get \rrr{that  \eqref{povoino} holds also for $i=\rrr{3}$}.
 
 \subsection{Case $\rrr{i=3}$} \rrr{We assume that the event $\rrr{S_0\cap  S_1\cap S_2 \cap   S_3}$ occurs.} 
 
 We define   $ T_{\rrr{3}}(m,n):=g\bigl(T(m,n)\,|\, b^{(\rrr{3})}\bigr) $ and  $T_{\rrr{3}}(n):=g\bigl(T(n)\,|\, b^{(\rrr{3})})$.
  We  apply  (i)--Definition 
 \ref{coca?},   (i)--Definition \ref{legoland}, (i)--Definition \ref{ara} and (i)--Definition \ref{lattino} for $i=\rrr{3}$. In particular, this defines  \rrr{$K_3(m,n), B_3', E_3,F_3, C_4, S_4$}.   Property \rrr{$\mathfrak{p}_3$} is satisfied. 
If also \rrr{$S_4$} occurs, by Remark \ref{aiko} we can  define $b^{(\rrr{4})}$ as minimal point in $\e\bbZ^d$ such that $B(z,m)$ is  a seed in $b^{(\rrr{3})}+ T_{\rrr{3}}(m,n)$. 

Let us localize some objects. Due to Claim \ref{locus2-3}   in Appendix \ref{app_locus},   $b^{(\rrr{3})}+T_{\rrr{3}}(m,n)\subset B(4Ne_1,N)$ (see Figure \ref{aquile}). In particular, if \rrr{$S_4$} occurs, then $\rrr{C_4} \cap B(4Ne_1,N)$ contains the above seed $B(z,m)$.   When \rrr{$S_4$} occurs,  due to \eqref{povoino} for \rrr{$i=3$} and 
reasoning as in \eqref{matiz}, we get that \eqref{povoino} holds also  for \rrr{$i=4$}.

\subsection{Case \rrr{$i=4$}}
We  assume that the event \rrr{$S_0\cap S_1\cap \cdots \cap S_4$} occurs.
The idea now is to connect the cluster $C_{\rrr{4}}$ to seeds adjacent to 
 the remaining three faces of the  cube $b^{(\rrr{4})}+B(n)$ in directions $e_1$ and $\pm e_2$  \rrr{(note that we have already the seed $B\bigl(b^{(3)},m\bigr)$ in the direction $-e_1$)}.
To this aim we set 
 \begin{align*}
 & \hat T_1(n):= g\bigl(T(n)\,|\,b^{(\rrr{4} )} \bigr) \qquad \qquad  \hat T_1(m,n):= g\bigl(T(m,n)\,|\,b^{(\rrr{4} )} \bigr)
  \\ 
      & \hat T_2(n):=( h\circ \theta) \bigl( \hat T_1(n)  \bigr) 
 \qquad \;\;\;\;\;\hat T_2(m,n):=( h\circ \theta) \bigl( \hat T_1(m,n) \bigr)
    \\
  & \hat T_3(n):= ( h\circ \theta^3) \bigl(  \hat T_1(n) \bigr)   \qquad \;\;\;\;   \hat T_3(m,n):=( h\circ \theta^3) \bigl( \hat T_1(m,n) \bigr)
  \end{align*} 
  where $\theta$ is the rotation introduced before \eqref{gommina} and the map $h$ is defined as  $h(x_1,x_2, \dots,x_n):= (|x_1|,x_2,\dots,x_n)$. 
\rrr{The use of the above sets $\hat T_j(n), \hat T_j(m,n)$  is due to the fact that we want to avoid to construct seeds in regions that have
already been explored during the constructions of the previous sets of type $C_r$.
Indeed points in the sets  $\hat T_j(n), \hat T_j(m,n)$    have first coordinate not smaller than $4N=b^{(4)}_1$ and this assures the desired property.}
%
%
  
 We also set 
 \be\label{thorin} B_{\rrr{4}}':= b^{(\rrr{4})}+\bigl(B(n)\cup \bigl[\cup_{j=1,2,3}  \hat T_j(m,n)\bigr]\bigr)\,.\en
For $j=1,2,3$ we
 call $K^{(\rrr{4}+j)}(m,n)$ the set of points $x \in b^{(\rrr{4})}+ \hat T_j(n) $ which are \rrr{adjacent} inside $\bbG$ to a seed contained in $b^{(\rrr{4})}+ \hat T_j(m,n)$ \rrr{(the sets $K^{(1)}(m,n)$,..., $K^{(4)}(m,n)$ had already been introduced to define the success-event $S_1$ in Definition \ref{gandalf})}.  \rrr{Note that we do not apply (i)--Definition \ref{coca?} for $i=4$, since the set $B_{\rrr{4}}'$ has already been introduced in \eqref{thorin} in a different form}.
We apply \rrr{here} only (i)--Definition \ref{legoland} with $\rrr{i=4}$ to define \rrr{the sets $E_4,F_4,C_5$}.

\begin{Definition}[\rui{Success-event  $S_{5}$ and points $b^{(5)}$, $b^{(6)}$, $b^{(7)}$}]
We call
  $S_{5}$ the success-event that $C_{5}$ contains at least one vertex inside $K^{(4+j)}$ for all $j=1,2,3$. When $S_{5}$ occurs,   for $j=1,2,3$ we define $b^{(4+j)}$ as  the minimal point in $\ezd$ such that $b^{(4+j)}+ B(m)$   is a seed in $C_{5} \cap \bigl( b^{(4)}+\hat T_j(m,n)\bigr)$. The  existence of such a seed can be derived by the same arguments of Remark \ref{aiko} since $C_{4}\cup \partial C_{4}$ and $b^{(4) }+\bigl (\hat T_j(n) \cup \hat T_j(m,n) \bigr)$ are disjoint for $j=1,2,3$ (as \rrr{discussed} in   Section \ref{sole88}).
 \end{Definition}

Let us localize the above objects \rrr{when also $S_5$ occurs}. Due to \eqref{povoino} for \rrr{$i=4$} and 
reasoning as in \eqref{matiz}, we get that \eqref{povoino} holds also  for $i=\rrr{5}$.  
For $i=\rrr{6,7}$ we have
\be\label{suono}
 b_1^{(i)}\in 4N+[m,n-m]\,,\quad
\begin{cases}  b_2^{(\rrr{6})} =b_2^{(\rrr{4})}+ N \,,\\
b_2^{(\rrr{7})} =b_2^{(\rrr{4})}- N \,,
\end{cases}
b^{(i)}_j \in [-n+m,n-m] \text{ for }j\geq 3\,
\en
(for $j\geq 3$ one has to argue as in \eqref{matiz}).
 Moreover,  due to     Claim \ref{locus4} in Appendix \ref{app_locus},  if 
 \rrr{$S_0\cap S_1\cap \cdots \cap S_5$}
  occurs,  then $b^{(\rrr{4})}+\hat T_1(m,n) \subset B( 5N e_1, N)$, 
$b^{(\rrr{4})}+\hat T_2(m,n) \subset B(4N e_1+ N e_2, N)$ and  $b^{(\rrr{4})}+\hat T_3(m,n) \subset B(4N e_1- Ne_2, N)$. In particular, the same inclusions hold for  the seeds $b^{(\rrr{5})}+ B(m)$,
$b^{(\rrr{6})}+ B(m)$ and $b^{(\rrr{7})}+ B(m)$, respectively. 

%
%
%
%
   \begin{Definition}[\rui{Occupation and \kui{linkage} of $e_1$}] \label{1occ}
Knowing that  the origin $0\in \rrr{\ezd} $ is occupied, we say that the site  $e_1$ is linked to $0$ and occupied  \rrr{(shortly, $0\to e_1$)}
 if \rrr{also the  event   $ \cap _{i=2}^{ 5} S_i $} takes place.
\end{Definition}

\begin{Proposition}\label{prop_occ_e1} If $0$ is occupied and  $e_1$  is linked to $0$ and occupied,  then the sets \rrr{$C_2,C_3, \dots, C_5$} are connected in $\rrr{\bbG_+}$. Moreover, 
 \be\label{soglia2} \bbP( e_1\text{ is linked to $0$ and occupied}\, |\, \text{$0$ is occupied} )   \geq \rrr{1-6 \e'}\,.
 \en
\end{Proposition}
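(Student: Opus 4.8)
The statement has two parts, and I would handle them in turn.

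\emph{Connectivity.} Here the plan is to iterate Lemma \ref{piccolino}. Since $0$ is occupied the event $S_0$ holds, so by Remark \ref{gomitolo1} the set $C_1$ is connected in $\bbG$, hence in $\bbG_+$, and $C_1\subset\bbV=\bbV_-\subset\bbV_+$. Because in addition $e_1$ is linked to $0$ and occupied, the hypotheses force $S_0\cap S_1\cap\cdots\cap S_5$ (Definitions \ref{0occ} and \ref{1occ}); in particular $S_1,\dots,S_4$ hold, so the points $b^{(1)},\dots,b^{(4)}$, the sets $T_i(n),T_i(m,n),B_i'$ (and, for $i=4$, the sets $\hat T_j(n),\hat T_j(m,n)$ and $B_4'$), and hence the sets $E_i=E\bigl[C_i,B(b^{(i)},n),i\bigr]$, $F_i=F\bigl[C_i,B(b^{(i)},n),B_i',i\bigr]$ and $C_{i+1}=C_i\cup E_i\cup F_i$ of (i)--Definition \ref{legoland} are well defined for $i=1,2,3,4$. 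Applying Lemma \ref{piccolino} with $C=C_i$, $E=E_i$, $F=F_i$ shows that $C_{i+1}\subset\bbV_+$ is connected in $\bbG_+$ whenever $C_i$ is; starting from $C_1$ and running $i=1,2,3,4$ yields that $C_2,C_3,C_4,C_5$ are connected in $\bbG_+$.

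\emph{The bound \eqref{soglia2}: reduction.} By Definitions \ref{0occ} and \ref{1occ} the probability in \eqref{soglia2} equals $\bbP\bigl(S_2\cap S_3\cap S_4\cap S_5\mid S_0\cap S_1\bigr)$. Writing $\cE_i:=S_0\cap S_1\cap\cdots\cap S_i$ (so $\cE_1=\{0\text{ is occupied}\}$) and peeling the events off one at a time, using $\bbP(A\cap\cE_i\mid\cE_1)\le\bbP(A\mid\cE_i)$ for $i\ge1$, I would reduce the claim to
\[
\bbP(S_2^c\mid\cE_1)+\bbP(S_3^c\mid\cE_2)+\bbP(S_4^c\mid\cE_3)+\bbP(S_5^c\mid\cE_4)\le 6\e',
\]
and then prove that each of the first three terms is $\le\e'$ and the last one is $\le3\e'$.

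\emph{The single step via Lemma \ref{pierpilori}.} For $i\in\{1,2,3\}$, inside $\cE_i$ I would condition on the realization of $C_i$ together with the portion of the fields $A$ and $T^{(1)},\dots,T^{(i-1)}$ revealed while building $C_1,\dots,C_i$; carrying out that construction as a breadth--first exploration (which reveals $A$, and each $T^{(j)}$ with $j<i$, only at reached vertices and their immediate neighbours) the revealed region is contained in $C_i\cup\partial C_i$, so, setting $R:=C_i\cup\partial C_i$, the conditioning event $H\subset\cE_i$ lies in the $\sigma$--algebra generated by $(A_x)_{x\in R\cup\partial R}$ and $(T^{(j)}_x)_{x\in R\cup\partial R,\,j<i}$. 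After translating and rotating so that $b^{(i)}$ becomes the origin and $T_i$ occupies the standard position of $T(n),T(m,n)$, one has $B(b^{(i)},m)\subset C_i$, giving $B(m)\subset R$, while property $\mathfrak p_i$ (whose validity, for the various $i$, is checked through the localization bounds of Appendix \ref{app_locus}) is exactly the disjointness $(R\cup\partial R)\cap(T(n)\cup T(m,n))=\emptyset$. Taking $k_*:=i$ — untouched everywhere, in particular on the collar $D$, since $E_i$ is the first set to query $T^{(i)}$ — one is in the hypotheses of Lemma \ref{pierpilori}, which yields $\bbP(G\mid H)\ge1-\e'$ for the path event $G$ of (P1)--(P8). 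Unwinding Definition \ref{def_triade} and (i)--Definition \ref{coca?}, the occurrence of $G$ forces $z_1\in E_i$ (by (P2), (P6), (P7)), then $z_\ell\in F_i$ (by (P3), (P4), (P8), using $B(b^{(i)},n)\subset B_i'$), and finally $z_\ell\in K_i(m,n)$ (by (P5)); hence $G\subset\{C_{i+1}\cap K_i(m,n)\ne\emptyset\}=S_{i+1}$, and summing over the values of $R$ gives $\bbP(S_{i+1}^c\mid\cE_i)\le\e'$ for $i=1,2,3$. For the last term I would write $S_5^c=\bigcup_{j=1}^3\{C_5\cap K^{(4+j)}(m,n)=\emptyset\}$ and run the same argument once for each $j=1,2,3$, now with $R$ the region explored up to $C_4$, $k_*=4$, and target blocks $b^{(4)}+\hat T_j(n),\ b^{(4)}+\hat T_j(m,n)$, which are disjoint from $C_4\cup\partial C_4$ (see Section \ref{sole88}); a union bound then gives $\bbP(S_5^c\mid\cE_4)\le3\e'$, completing the reduction and hence \eqref{soglia2}.

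\emph{Main obstacle.} The real work is in making the conditioning above legitimate: one must present the construction of $C_1,\dots,C_i$ as a genuine exploration process, so that the pair (explored region $R$, occurrence of $\cE_i$) is measurable with respect to $\{A_x:x\in R\cup\partial R\}\cup\{T^{(j)}_x:x\in R\cup\partial R,\,j<i\}$ with $R$ as above; verify that the fresh field $T^{(i)}$ is indeed untouched on the collar $D$ of $R$; and, through the geometric estimates of Appendix \ref{app_locus}, establish property $\mathfrak p_i$ (and its $i=4$ analogue for the three remaining faces) — precisely the hypotheses $B(m)\subset R$ and $(R\cup\partial R)\cap(T(n)\cup T(m,n))=\emptyset$ of Lemma \ref{pierpilori}. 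One also uses repeatedly the invariance of $\bbP$ and of the graphs $\bbG,\bbG_+$ under translations by $\ezd$ and under the coordinate reflections/permutations entering $f$, $g$ and $\theta$, so that Lemma \ref{pierpilori}, stated for the standard blocks, transports to all the translated and rotated frames arising in the steps $i=1,\dots,4$.
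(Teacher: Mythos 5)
Your proof follows essentially the same route as the paper: iterate Lemma \ref{piccolino} from Remark \ref{gomitolo1} for connectivity, then reduce \eqref{soglia2} to per-step bounds proved by conditioning on the realized history and applying Lemma \ref{pierpilori} with the fresh index $k_*=i$; your union-bound-on-complements decomposition, giving $\e'+\e'+\e'+3\e'=6\e'$, is a cosmetic variant of the paper's chain rule plus Bernoulli's inequality and produces the same constant.

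One small but worth-noting slip: you set $R:=C_i\cup\partial C_i$ when invoking Lemma \ref{pierpilori}. The correct choice is $R:=C_i$ (i.e.\ the conditioned realization $R_i$ of $C_i$). With $R:=C_i$, Lemma \ref{ciak2011} gives precisely that the conditioning event lies in $\sigma\bigl((A_x)_{x\in R\cup\partial R},(T^{(j)}_x)_{x\in R\cup\partial R,\,j\in\Lambda(x)}\bigr)$, and property $\mathfrak p_i$ is \emph{verbatim} the disjointness requirement $(R\cup\partial R)\cap(T_i(n)\cup T_i(m,n))=\emptyset$ of \eqref{mare100}. With your $R=C_i\cup\partial C_i$ the set $R\cup\partial R$ is a double collar of $C_i$, the measurability claim becomes a non-optimal enlargement, and --- more to the point --- the disjointness you would need is strictly stronger than $\mathfrak p_i$, so it is no longer ``exactly'' the condition checked in Appendix \ref{app_locus} (the geometric margins there, $n-m>2$ and the like, would still absorb one more boundary layer, but you would have to say so). Replacing $C_i\cup\partial C_i$ by $C_i$ aligns your argument with the paper's and removes the gap.
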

We postpone the proof of the above proposition to Section \ref{puffo1}.

\subsection{Further comments on the construction of the occupied clusters}\label{francia}
\rrr{We start by explaining  what to do in the case of a non-success by  treating an example.  Suppose that we have a success until the definition of $C_5$:  $0$ is occupied and  $e_1$  is linked to $0$ and occupied. Suppose that, according to Tanemura's algorithm,  we want to extend $C_5$ along the first direction in order to get a cluster with a seed in the proximity of $8N e_1$. To this aim we set
$T_5(n):= g ( T(n)| b^{(5)})$ and apply (i)-Definitions \ref{coca?}, \ref{legoland}  and \ref{ara} with $i=5$ (see Fig. \ref{aquile}). This defines the sets $E_5,F_5, C_6$ and the success-event $S_6$. If $S_6$ does not occur, then we extend $C_6$ trying to develop the cluster along the route from $4N e_1$ to $4Ne_1 +4N e_2$ (if $e_2$ is the direction prescribed by Tanemura's algorithm). To do this we use the seed centered at $b^{(6)}$. We define 
\[ 
g_2 (x|a):= \left(-\text{sgn}(a_1) x_1, x_2, -\text{sgn}(a_3) x_3,\dots, -\text{sgn}(a_d) x_d\right)
\]
and set 
$T_6(n):=g_2\bigl(T(n) | b^{(6)}\bigr)$, $T_6(m,n):=g_2\bigl(T(m,n) | b^{(6)}\bigr)$.  We then  apply (i)-Definitions \ref{coca?}, \ref{legoland}  and \ref{ara} with $i=6$. This defines the sets $E_6,F_6, C_7$ and the success-event $S_7$, and  we proceed in this way.}

\rrr{In order to check the validity of Assumption (A) (cf.  Section \ref{sec_japan}) for the construction outlined in Section \ref{sec_ginepro}, one applies iteratively Lemma \ref{pierpilori} as done in the proof of Proposition \ref{prop_occ_e1}. 
Since we explore uniformly bounded regions, by taking $K$ large enough in Definition \ref{cavallo}, we can  apply iteratively  Lemma \ref{pierpilori} assuring condition \eqref{monti100} to be fulfilled simply by using some index $k_*\in \{1,2,\dots, K\}$ not already used in the region under exploration.}

\section{Proof of   \eqref{selva0}, Corollary \ref{cor1} and Corollary \ref{cor2}}\label{bin_MA}
In this section we prove  Corollaries \ref{cor1} and  \ref{cor2}. 
 Before, we state and prove the phase transition mentioned in \eqref{selva0}:
\begin{Lemma}\label{silvestro} Let $d\geq 2$, \rui{$\l>0$ and let $h(a,b)$ be  given by $h(a,b)=(a+b)^\g$ with $\g>0$, or $h(a,b)=\min(a,b)$  or  $h(a,b)=\max(a,b)$.} Consider the graph $\cG=\cG( h,\l)$ built on the  $\nu$--randomization of a  PPP on $\bbR^d$ with \rui{intensity} $\l$, where $\nu$ has bounded support and $\nu(\{0\})\not =1$.  Then there exists \rui{ $\l_c\in (0,+\infty)$}  such that \eqref{selva0} holds.
\end{Lemma}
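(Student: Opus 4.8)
The plan is to run the standard continuum--percolation argument: first secure a $0$--$1$ law and monotonicity so that a critical intensity is well defined, then bound it away from $0$ and from $+\infty$ by comparing $\cG(h,\l)$ with Poisson Boolean models of deterministic radius, exactly as the excerpt already does for Corollary~\ref{cor2}. First I would note that, for each fixed $\l>0$, the event $\{\cG(h,\l)\text{ percolates}\}$ is invariant under the $\bbR^d$--translations, and that the $\nu$--randomization of a homogeneous PPP is translation ergodic; hence this event has probability $0$ or $1$, call it $\th(\l)$. Writing a PPP of intensity $\l'\ge\l$ as the superposition of independent PPP's of intensities $\l$ and $\l'-\l$ (with i.i.d.\ $\nu$--marks throughout) yields a coupling under which $\cG(h,\l)\subset\cG(h,\l')$ as graphs, so $\th$ is non-decreasing, and therefore $\l_c:=\inf\{\l>0:\th(\l)=1\}\in[0,+\infty]$ automatically satisfies \eqref{selva0}. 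It then remains only to check that $0<\l_c<+\infty$.

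For $\l_c<+\infty$ I would argue as follows. Since $\nu(\{0\})\ne1$ and ${\rm supp}(\nu)$ is bounded, one can fix $r_0>0$ with $q:=\nu(\{a\ge r_0\})>0$. In each of the three cases one reads off a strictly positive deterministic number $\rho_0$ (one may take $\rho_0=(2r_0)^\g$ when $h(a,b)=(a+b)^\g$, and $\rho_0=r_0$ when $h=\min$ or $h=\max$) such that any two distinct process points $x,y$ with $E_x,E_y\ge r_0$ and $|x-y|\le\rho_0$ form an edge of $\cG$. The points carrying a mark $\ge r_0$ form a PPP of intensity $\l q$, so $\cG(h,\l)$ contains a Poisson Boolean model of deterministic radius $\rho_0/2$ and intensity $\l q$; by the classical supercriticality of the Boolean model in $\bbR^d$, $d\ge2$ (cf.~\cite{MR}), this submodel percolates as soon as $\l q$ exceeds its critical intensity, and hence $\cG(h,\l)$ percolates a.s. Thus $\th(\l)=1$ for $\l$ large.

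For $\l_c>0$ I would set $H:=\sup\{h(a,b):a,b\in{\rm supp}(\nu)\}$, which is finite since ${\rm supp}(\nu)$ is bounded and $h$ is continuous on it. Every edge of $\cG(h,\l)$ then has Euclidean length at most $H$, so $\cG(h,\l)$ is a subgraph of the Poisson Boolean model of deterministic radius $H/2$ and intensity $\l$; by the classical subcriticality of the Boolean model for small intensity (cf.~\cite{MR}), the latter does not percolate once $\l$ is small enough, hence neither does $\cG(h,\l)$, so $\th(\l)=0$ for such $\l$. Combined with Step~1 this gives \eqref{selva0} with $\l_c\in(0,+\infty)$. I do not expect a genuine obstacle here: the only points requiring care are verifying, case by case, that truncating to the marks $\ge r_0$ still yields a strictly positive deterministic connection radius---so that the Boolean-model phase transition of \cite{MR} may be invoked---together with recording the $0$--$1$ law and the coupling monotonicity that make $\l_c$ well defined.
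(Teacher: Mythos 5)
Your proof is correct and follows essentially the same route as the paper's: monotonicity in $\l$ via superposition coupling, a $0$--$1$ law, percolation for large $\l$ by thinning to marks $\geq r_0$ and comparing with a Boolean model of deterministic radius $h(r_0,r_0)/2$, and non-percolation for small $\l$ by bounding edge lengths and comparing with a Boolean model of deterministic radius $H/2$. The only cosmetic difference is that the paper takes the bounding radius as $h(c',c')/2$ with $c' > \sup(\mathrm{supp}\,\nu)$ rather than $H/2 = \tfrac12\sup_{a,b}h(a,b)$, which amounts to the same bound.
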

\begin{proof} Since the  superposition of  independent PPP's is a new PPP  with   \rui{intensity} given by the sum of the original \kui{intensities}, 
by a standard coupling argument  the map $\l \mapsto P\left(  \cG( h ,\l) \text{ percolates} \right)\in [0,1] $ is non-decreasing.   By the 0-1 law, the above map takes value $0,1$.  Hence, to prove \eqref{selva0}, it is enough to prove that the above  map equals $0$  for $\l$ small and equals $1$ for $\l$ large.

We first prove that $P\left(  \cG( h ,\l) \text{ percolates} \right)=1$ for $\l$ large. To this aim, 
 we fix a positive constant $c$ such that   $ c< \sup\bigl ({\rm supp}(\nu)\bigr) $ (this is possible since $\nu(\{0\})\not =1$).
Note that  $\cG( h,\l)$ contains the subgraph $\tilde \cG$ with vertex set  $\tilde \xi :=\{ x\in \xi\,:\, E_x \geq c\}$ and edges $\{x,y\}$ with $x\not = y$ in $\tilde \xi$ such that $|x-y| \leq h(E_x,E_y)$. Given $x\not =y$ in $ \tilde \xi$ with  $|x-y| \leq \rui{h(c,c)}$,  $\{x,y\}$ is an edge of $\tilde \cG$ as $h(E_x,E_y) \geq \rui{h(c,c)}$. Hence, $\cG( h,\l)$ contains the Boolean graph model built on $\tilde \xi$ with deterministic radius 
$\left( \rui{h(c,c)}\right)/2>0$. As   $\tilde \xi$ is a PPP with \rui{intensity} $\l p$ where  $p:=\nu ( [c,+\infty) )>0$,  we get that (see \cite{MR}) there exists \rui{$\tilde\l_c\in(0,+\infty)$} such that $\tilde\cG$ percolates a.s. if $\l p>\tilde\l_c$. Hence, if \kui{$\l>\tilde\l_c/p$}, the graph $\cG(h,\l)$ percolates a.s.. 

We now prove that $P\left(  \cG( h,\l) \text{ percolates} \right)=0$ for $\l$ small. 
Since $\nu$ has bounded support, we can now fix a finite constant 
 $ c'> \sup\bigl ({\rm supp}(\nu)\bigr) $. Then 
  $\cG( h,\l)$ is contained in  the Boolean graph model built on \kui{$ \xi$} with deterministic radius 
$\left( \rui{h(c',c')}\right)/2$. 
 Since, for $\l$ small, the latter a.s. does not percolate, we get the thesis.\end{proof}

\subsection{Proof of Corollary \ref{cor1}}  Due  to Theorem  \ref{teo1} we only need to check  Assumptions (A1),...,(A5).
 Note that  Assumptions (A1), (A3) and (A5) follow  immediately from   the  hypotheses of Corollary \ref{cor1} and the definition of $h$.  As 
pointed out in Section \ref{moda},  if  $h$ is continuous, ${\rm supp}(\nu)$ is bounded  and (A5) is satisfied (as in the present setting), then (A4) is automatically satisfied by compactness. 

It remains to prove Assumption (A2). To this aim,
 we  fix $\l_1\in (\l_c, \l)$. Given $c>0$ we define
 $p_0:=\nu(\{0\})$, $p_1=p_1(c):=\nu\left( (0,c)\right)$ and 
  $p_2=p_2(c):= \nu( [c,+\infty) )$, $\nu_1:= \nu\left( \cdot | \, (0,c) \right)$ and 
  $\nu_2:= \nu\left( \cdot | \, [c,+\infty)\right)$. Then $\nu=p_0 \d_0 +p_1 \nu_1 + p_2 \nu_2$ ($\d_0$ is the standard  Dirac measure at $0$). Trivially, $\lim _{c\downarrow 0} p_1=0$ and $\lim _{c\downarrow 0} p_2=1-p_0$. 
 We  choose  $c>0$ small enough to have  $\frac{1-p_0}{p_2} \l_1<\l$.

 Call $\hat \cG= \cG( \l_1,h;\mu)$ the graph with structural function
 \rui{$h$} 
 built on the $\mu$--randomization of a PPP $\xi$ with \rui{intensity} $\l_1$,
 where 
 \[ \mu:=p_0\d_0+ (1-p_0) \nu_2=p_0\d_0+ (p_1+p_2) \nu_2\,. \]
We have that $\mu $ stochastically dominates $\nu$ since, for all $a\geq 0$, it holds
 \[
\mu\left( [a, +\infty)\right) =  
\begin{cases}
1= \nu \left( [a, +\infty)\right)
 & \text{ if } a=0\,,\\
 1-p_0\geq  \nu \left( [a, +\infty)\right) &  \text{ if } 0<a<c  \,, \\
 (1- p_0) \frac{ \nu \left( [a, +\infty)\right)}{ 1-p_0-p_1}\geq  \nu \left( [a, +\infty)\right)  & \text{ if } a\geq c\,.
  \end{cases}
  \]
  Due to the above stochastic domination there exists a coupling between the $\mu$-randomization of the  PPP $\xi$ with \rui{intensity} $\l_1$ and the $\nu$-randomization of the  PPP $\xi$ with \rui{intensity} $\l_1$ such that the  \rui{marks} in the former   are larger than or equal to  the   \rui{marks} in the latter.
    As $\cG(h, \l_1)$ percolates a.s. since $\l_1>\l_c$,   by the above  coupling and   since $h(\cdot, \cdot)$ is jointly increasing,  $\hat \cG$ percolates a.s..
    
    Trivially, given $\rho\in(0,1) $, $\hat \cG$ can be described also as  the graph with vertex set $\xi$  as above and edge set   given by the unordered pairs $\{x,y\}\subset \xi $ with $x\not=y$   and 
\be  \label{bagnoli1}
 \bigl | \rho x  -\rho y  \bigr |  
\leq 
\rui{\rho h(E_x,E_y) = h(E_x,E_y)- (1-\rho) h(E_x,E_y) }\,, 
  \en 
  where  the marks $E_x$'s have law $\mu$.
Note that,  as $x\not =y$, at least one between the marks $E_x,E_y$ is nonzero \rui{(and both of them are non zero if $h(a,b)=\min(a,b)$)} and therefore lower bounded by $c$ \rui{a.s. (by definition of $\mu$)}.
 This implies that   
\rui{$(1-\rho) h(E_x,E_y) \geq (1-\rho) c^\g=: \ell_*$ if $h(a,b)=(a+b)^\g$ and $(1-\rho) h(E_x,E_y) \geq (1-\rho) c=: \ell_*$ if $h(a,b)= \min(a,b)$ or $h(a,b)=\max(a,b)$. At this point,}
 by applying the  map $x\mapsto \rho x$, we get that the image of $\hat \cG $ is contained in the graph $\bar \cG:=\cG(h-\ell_*, \l_2; \mu )  $ with structural function $h-\ell_*$ built on the $\mu$--randomization of a PPP with \rui{intensity}  $ \l_2:=\l_1  \rho^{-d}$. As $\hat \cG$ percolates a.s., the same holds for its $\rho$-rescaling contained in $\bar \cG$. This proves that $\bar \cG  $  percolates a.s..

 Recall that $\frac{1-p_0}{p_2} \l_1<\l$. We now choose $\rho $ very near to $1$ to have 
$\l_*:=\frac{1-p_0}{p_2}\l_2=\frac{1-p_0}{p_2} \l_1  \rho^{-d}$ smaller than $\l$.
To conclude with (A2) we only need to show that $\cG(h-\ell_*, \l_*)$ percolates a.s.. To this aim we observe that
\[
\nu=\frac{p_2}{p_1+p_2}\mu + \frac{1-p_2}{p_1+p_2}\bar \mu\,, \qquad \bar \mu:=\frac{p_1}{1-p_2}  p_0 \d_0 + \frac{1-p_0}{1-p_2}p_1 \nu_1\,.
\]
Note that $\nu$ is a convex combination of the probability measures $\mu$ and $\bar \mu$.
Hence,
 the marked vertex set of  $\cG(h-\ell_*, \l_*)$, which is the $\nu$--randomization of a PPP with \rui{intensity} $\l_*$, can be obtained as superposition of  two independent marked point processes given by   
 the $\mu$--randomization of a  PPP with \rui{intensity} $\frac{p_2}{p_1+p_2} \l_*=\l_2$ and the $\bar \mu $-randomization of a PPP with \rui{intensity} $\frac{1-p_2}{p_1+p_2} \l_*$.
The subgraph of  $\cG(h-\ell_*, \l_*)$ given by the points in the first marked PPP and their edges has the same law of $\bar  \cG$. As  $\bar  \cG$  percolates a.s. we get that $\cG(h-\ell_*, \l_*)$  percolates a.s., i.e. (A2) is satisfied. 

\subsection{Proof of Corollary \ref{cor2}} Due  to Theorem  \ref{teo1} we only need to check that Assumptions (A1),...,(A5) are satisfied.

Assumptions (A1) and (A3) follow  immediately from   the hypothesis of the corollary. Also  Assumption (A5) is trivial as   $h(a,b)=\z- 2\max\{ |a|,|b|\}$ for $ab\geq 0$. As 
pointed out in Section \ref{moda},  since  $h$ is continuous, ${\rm supp}(\nu)$ is bounded  and (A5) is satisfied, then (A4) is automatically satisfied by compactness.

 Let us prove Assumption  (A2).  
To this this aim  we fix $\ell>0$ such that  $\z-\ell>\z_c$ (this is possible as $\z>\z_c$). Then, by \eqref{selva}, $\cG(h-\ell ,\l )$ percolates a.s..
  Moreover, 
given $\g \in (0,1)$, $\cG(h-\ell ,\l )$ can be described also as  the graph with vertex set $\xi$ given by a PPP with \rui{intensity} $\l$ and edge set    given by the pairs $\{x,y\}\subset \xi $ with $x\not=y$   and 
\be  \label{bagnoli}
 \Big | \frac{x}{\g} -\frac{ y}{\g} \Big |  
\leq 
 \frac{\z-\ell}{\g}-\frac{1}{\g}  ( |E_x|+ |E_y| +|E_x-E_y|)
 \,,
 \en 
where the marks come from the $\nu$-randomization of the PPP $\xi$.
Note that the r.h.s. is upper bounded by $(\z-\ell)/\g-  ( |E_x|+ |E_y| +|E_x-E_y|)$. 

We now  fix $\g$ very near to $1$ (from the left) to have   $(\z-\ell) /\g<\z$. Hence, we can write $(\z-\ell) /\g=\z-\ell_*$ for some $\ell_*>0$.
Due to the above observations, if $\{x,y\}$ is an edge of $\cG(h-\ell ,\l )$, then $|x/\g-y/\g|\leq \z-\ell_*- ( |E_x|+ |E_y| +|E_x-E_y|)$. In other words, the graph $\cG(h-\ell ,\l )$ is included in the new  graph $\hat \cG= (\xi , \hat \cE)$, with edge  set $\hat \cE$  given by the pairs  $\{x,y\}\subset \xi $ with $x\not=y$   and 
\be \label{pompei}
\bigl| x/\g - y/\g \bigr | \leq \z-\ell_*-  ( |E_x|+ |E_y| +|E_x-E_y|)  \,.
\en
As already observed  $\cG(h-\ell ,\l )$ percolates a.s., thus implying that $\hat \cG$ percolates a.s..
 On the other hand, due to \eqref{pompei}, the graph  obtained by spatially rescaling  $\hat \cG$ according to the map $x\mapsto x/\g$ 
 has the same law of the graph  $\cG( h-\ell_*, \l \g^d )$.  Hence, the latter percolates a.s..  To conclude the derivation of (A2) it is enough to take $\l_*:=  \l \g^d$.
  


\section{Proof of Proposition  \ref{cinquina}}\label{sio5}
Recall Definition  \ref{sambinaA}.

\begin{Definition}[\rui{Sets $A(n)$ and $T_{\s,J}(n)$}] \label{sambina}
For $m \leq n\in\bbN_+$, $z \in \ezd$, $\s \in \{-1,1\}^d$, $J\in \{1,2,\dots, d\}$     we define the following sets \rrr{(see Figure
 \ref{protex})}
\begin{align*}
& A(n):=\{ x\in \ezd\,:\, n-1 < \|x\|_\infty \leq n\}\,,\\
& T_{\s,J}(n):=\{ x\in \ezd : n-1 < \|x\|_\infty \leq n, \,0 \leq \s_i x_i \leq \s_J  x_J \; \forall i=1,2,\dots, d\}
 \,.
\end{align*}
 \end{Definition} 
 
 \begin{figure}
\includegraphics[scale=0.35]{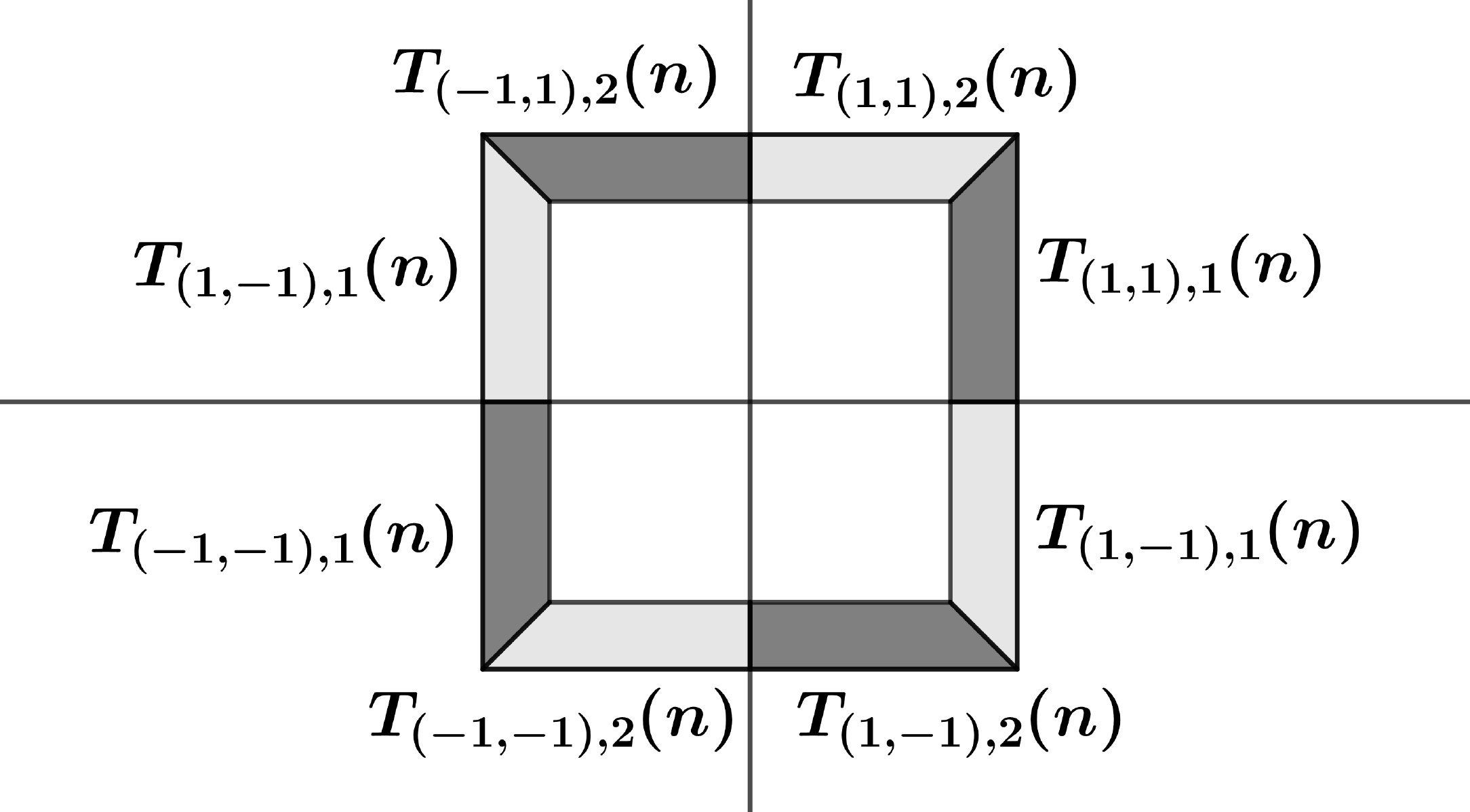}
\captionsetup{width=.9\linewidth}
\caption{The sets $T_{\s,J}$ for $d=2$. The largest square has radius $n$ and the smallest one has radius $n-1$. The annulus $A(n)$ is the union of the $T_{\s,J}$'s, hence it corresponds to the (dark or light) grey region. } 
\label{protex}
\end{figure}

Note that $T_{\underline{1},1}(n)= T(n)$, where 
 $\underline{1}:=(1,1,\dots, 1)$.  The following fact can be easily checked (hence we omit its proof):

%
%

 \begin{Lemma}\label{euclide} 
 We have the following properties:
 \begin{itemize}
  \item[(i)] 
 $
 A(n)= \cup _{\s\in \{-1,1\}^d}\cup_{J=1}^d 
 T_{\s,J}(n)$;
 \item[(ii)]  given $(\s,J)$ the map
$ \psi_{\s,J}( x_1,x_2, \dots, x_d):=(y_1,y_2, \dots, y_d)$,
 where 
 \[
 y_k:= 
 \begin{cases}
  x_J \s_1 \text{ if } k=1 \,,\\
 x_1 \s_J  \text{ if } k=J\,,  \\
  x_k \s_k  \text{ otherwise}\,, \end{cases}
 \]
 is an isometry from $T(n)$ to $T_{\s,J}(n)$ and it  is the identity when $\s = \underline{1}$ and $J=1$.
 \end{itemize}
 \end{Lemma}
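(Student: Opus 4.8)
The plan is to handle the two assertions separately, in each case reducing matters to the elementary observation that $\psi_{\s,J}$ is a \emph{signed coordinate permutation}: it is the linear map whose matrix is the signed permutation matrix associated to the transposition $(1\,J)$ with sign vector $\s$. Such a matrix is orthogonal, hence $\psi_{\s,J}$ is a bijective isometry of $\bbR^d$ preserving every $\ell^p$-norm, in particular $\|\cdot\|_\infty$, and it maps the lattice $\ezd$ bijectively onto itself.

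For part (i) the inclusion $\supseteq$ is immediate, since by Definition \ref{sambina} each $T_{\s,J}(n)$ is contained in the annulus $\{x\in\ezd:\,n-1<\|x\|_\infty\le n\}=A(n)$. For the reverse inclusion I would take $x\in A(n)$, set $\s_i:=\text{sgn}(x_i)$ with the convention $\text{sgn}(0)=1$ (so that $\s_i x_i=|x_i|\ge 0$ for every $i$), and pick any index $J$ with $|x_J|=\|x\|_\infty$. Then $0\le \s_i x_i=|x_i|\le|x_J|=\s_J x_J$ for all $i$, hence $x\in T_{\s,J}(n)$, and $A(n)\subseteq\bigcup_{\s,J}T_{\s,J}(n)$ follows.

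For part (ii), the orthogonality remark above already gives that $\psi_{\s,J}$ is an isometry; when $\s=\underline 1$ and $J=1$ the transposition is trivial and all signs are $+1$, so $\psi_{\underline 1,1}=\mathds 1$, and since $T_{\underline 1,1}(n)=T(n)$ by definition this yields the last clause. To prove $\psi_{\s,J}(T(n))=T_{\s,J}(n)$ I would establish both inclusions by direct substitution: writing $y=\psi_{\s,J}(x)$ one computes $\s_1 y_1=x_J$, $\s_J y_J=x_1$ and $\s_i y_i=x_i$ for $i\neq 1,J$, so if $x\in T(n)$, i.e. $0\le x_i\le x_1$ for all $i$, these identities give $0\le \s_i y_i\le \s_J y_J$ for all $i$; combined with $\|y\|_\infty=\|x\|_\infty$ this shows $y\in T_{\s,J}(n)$. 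The opposite inclusion is obtained in the same way by applying $\psi_{\s,J}^{-1}$ (again a signed permutation, with $\psi_{\s,J}^{-1}(y)_1=\s_J y_J$, $\psi_{\s,J}^{-1}(y)_J=\s_1 y_1$ and $\psi_{\s,J}^{-1}(y)_k=\s_k y_k$ otherwise) and checking, by the same inequalities read in the reverse direction, that it sends $T_{\s,J}(n)$ into $T(n)$.

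There is no genuine difficulty here; the only points deserving a little attention are the sign conventions (using $\text{sgn}(0)=1$ so that a lattice point with some vanishing coordinates still lies in some $T_{\s,J}(n)$) and keeping track of which coordinate plays the role of the ``maximal'' one before and after the transposition $(1\,J)$ — i.e. that the defining inequality $0\le x_i\le x_1$ of $T(n)$ is precisely the defining inequality of $T_{\s,J}(n)$ transported along $\psi_{\s,J}$. This is exactly why the statement can be labelled a fact that is easily checked and its proof omitted.
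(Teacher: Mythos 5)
Your proof is correct. The paper explicitly omits a proof of Lemma \ref{euclide}, calling it a fact that "can be easily checked," so there is nothing to compare against; your write-up simply supplies the routine verification in the expected way. A few small points worth noting for completeness: the case $J=1$ makes the first two clauses in the definition of $\psi_{\s,J}$ coincide (both give $y_1 = x_1\s_1$), so there is no ambiguity; your observation that $\psi_{\s,J}$ is a signed permutation matrix is exactly the right level of abstraction, giving at once orthogonality, preservation of $\|\cdot\|_\infty$, and invariance of $\ezd$; and the choice $\s_i:=\mathrm{sgn}(x_i)$ with $\mathrm{sgn}(0)=1$ in part (i) is the correct convention, since the defining inequalities of $T_{\s,J}(n)$ are non-strict and therefore tolerate $x_i=0$.
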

 
 The following lemma and its proof are inspired by \cite[Lemma 3]{GM} and its proof. \rui{To state it properly we introduce the set $U_n$ (recall the constant $\l_*$ introduced in  (A2), the definition of $h_*$ in \eqref{quiquoqua} and that $d(\cdot,\cdot)$ denotes the Euclidean distance):
 \begin{Definition}[Set $U_n$]
  Given \kui{positive integers} $n>m$, we denote by 
$U_n$  the set of  points $x\in A(n)$ such that  $B(m) \lrgh x$ in $ B(n)$ for  $ \bbG_-$ and 
\be\label{rotta}
d\bigl( x, B(n)^c \bigr)  \leq h_*(A_x)-3\a\,.
\en 
 \end{Definition}}
\begin{Lemma}\label{analogo3} Let  $m $ and $ n$ be positive integers such that  $n>m$. Then, for each integer $k$, it holds
\begin{equation}\label{LP}
\sum_{n=m+1}^\infty \bbP(|U_n|<k,\,B(m) \lrgh \infty\text{ for } \rrr{\bbG_-})<e^{  c(d) \l_* k}
\end{equation}
for a positive constant $c(d)$ depending only on the dimension.
\end{Lemma}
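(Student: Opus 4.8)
The plan is to run, in our setting, the ``renewal--plus--capping'' scheme behind \cite[Lemma~3]{GM}. We assume $h\searrow$; the case $h\nearrow$ is identical after replacing $+\infty$ by $-\infty$. Recall that, by Warning~\ref{aaah}, every edge of $\bbG_-$ has Euclidean length at most $h_*-3\a\le 1-3\a<1$, and that the variables $A_z$, $z\in\ezd$, are i.i.d. with $\bbP(A_z=+\infty)=e^{-\l_*\e^d}>0$ (cf.\ Definition~\ref{cavallo}; $z\in\bbV_-$ iff $A_z\in\bbR$). The idea is to explore the $\bbG_-$--cluster of $B(m)$ scale by scale and observe that at every ``thin'' scale $n$ (one with $|U_n|<k$) there is, conditionally on everything seen so far, a probability bounded below by a fixed $\d>0$ of ``capping'' the cluster so that it cannot leave $B(n)$; since on $\{B(m)\lrgh\infty\}$ no such cap can be present, thin scales must be summably rare.

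First I would record a deterministic first--exit fact. If $B(m)\lrgh\infty$ for $\bbG_-$, then for each $n>m$ an unbounded $\bbG_-$--path from $B(m)$ leaves $B(n)$ for the first time through an edge $\{x,y\}$ with $x\in B(n)$, $y\notin B(n)$ and $B(m)\lrgh x$ in $B(n)$. From $|x-y|\le h(A_x,A_y)-3\a\le h_*(A_x)-3\a<1$ we get $\|x\|_\infty>n-1$ (a lattice point outside $B(n)$ lies at distance $\ge 1+\e$ from any $z$ with $\|z\|_\infty\le n-1$), hence $x\in A(n)$, and also $d(x,B(n)^c)\le h_*(A_x)-3\a$; therefore $x\in U_n$ and $y$ belongs to
\be\label{pin_shell}
Y_n:=\bigl\{\,y\in\ezd\setminus B(n)\,:\,|x-y|\le 1\ \text{for some}\ x\in U_n\,\bigr\}\ \subseteq\ \{z:\ n<\|z\|_\infty\le n+1\}\,,
\en
a set with $|Y_n|\le C\,|U_n|$, where $C$ bounds the number of points of $\ezd$ in a ball of radius $1$. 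Since $\{x,y\}\in\bbE_-$ forces $A_y\in\bbR$, the event $\mathrm{Block}_n:=\{A_z=+\infty\ \forall z\in Y_n\}$ (the sure event if $U_n=\emptyset$) cannot occur; equivalently, $\mathrm{Block}_n$ would confine the cluster of $B(m)$ to $B(n)$. Hence $\{B(m)\lrgh\infty\ \text{for}\ \bbG_-\}\subseteq\mathrm{Block}_n^{\,c}$ for every $n>m$.

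Next comes the capping estimate. Put $\cF_n:=\s(A_z:\,z\in B(n))$; then $U_n\subseteq A(n)$, hence $Y_n$, are $\cF_n$--measurable, while $Y_n\subseteq\ezd\setminus B(n)$, so by the independence of the $A_z$,
\be\label{pin_cap}
\bbP(\mathrm{Block}_n\mid\cF_n)=e^{-\l_*\e^d|Y_n|}\ \ge\ e^{-C\e^d\l_*k}=:\d\qquad\text{on}\ \{|U_n|<k\}\,.
\en
Finally, the renewal. List the thin scales $m<\s_1<\s_2<\cdots$ and set $p_i:=\bbP\bigl(\s_i\ \text{exists},\ \mathrm{Block}_{\s_1}^{\,c},\dots,\mathrm{Block}_{\s_i}^{\,c}\bigr)$. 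Since $\s_\ell+1\le\s_{i+1}$ for $\ell\le i$, on $\{\s_{i+1}=l\}$ both ``$\s_i$ exists'' and $\mathrm{Block}_{\s_1}^{\,c},\dots,\mathrm{Block}_{\s_i}^{\,c}$ are $\cF_l$--measurable (the shells from \eqref{pin_shell} at $\s_1,\dots,\s_i$ sit inside $B(l)$), whereas $\mathrm{Block}_{\s_{i+1}}=\mathrm{Block}_l$ satisfies $\bbP(\mathrm{Block}_l^{\,c}\mid\cF_l)\le 1-\d$ there by \eqref{pin_cap}; conditioning on $\cF_l$ and summing over $l$ gives $p_{i+1}\le(1-\d)\,p_i$, and the same step gives $p_1\le 1-\d$, so $p_i\le(1-\d)^i$. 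As $\{B(m)\lrgh\infty\}\subseteq\bigcap_{n>m}\mathrm{Block}_n^{\,c}$ implies $\bbP(\s_i\ \text{exists},\ B(m)\lrgh\infty)\le p_i$, we obtain
\[
\sum_{n=m+1}^\infty\bbP\bigl(|U_n|<k,\ B(m)\lrgh\infty\ \text{for}\ \bbG_-\bigr)=\sum_{i\ge1}\bbP\bigl(\s_i\ \text{exists},\ B(m)\lrgh\infty\bigr)\le\sum_{i\ge1}(1-\d)^i=\frac{1-\d}{\d}<\frac1\d=e^{c(d)\l_*k}\,,
\]
with $c(d):=C\e^d$. The two points that will need care are (i) the first--exit geometry forcing the exit through $U_n$ (this is exactly why $U_n$ carries the condition $d(x,B(n)^c)\le h_*(A_x)-3\a$) and (ii) the measurability bookkeeping in the renewal step, i.e.\ that the thin--scale history up to $\s_i$ together with the past capping events lies in $\cF_{\s_{i+1}}$ — which is what the shell localization \eqref{pin_shell} guarantees; the remaining steps are routine.
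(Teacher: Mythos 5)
Your proof is correct. It does take a route that differs in its accounting from the paper's, and the difference is worth noting.

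The paper's argument for Lemma~\ref{analogo3} caps the cluster exactly one shell out: it defines, for each $x\in U_n$, the set $I_{n+1}(x)$ of $\ezd$--points in $A(n+1)$ within distance $1-3\a$, sets $G_n:=\{A_z\notin\bbR\ \forall z\in\cup_{x\in U_n}I_{n+1}(x)\}$, shows $G_n\subset\{U_{n+1}=\emptyset\}$, and bounds $\bbP(G_n\mid\cF_n)\geq e^{-c(d)\l_*k}$ on $\{1\leq|U_n|<k\}$. The punchline is then purely set-theoretic: the events $\{U_{n+1}=\emptyset,\,1\leq|U_n|<k\}$ are pairwise disjoint over $n$ (once the cluster fails to reach $A(n+1)$ it can never reach any outer shell again), so their probabilities sum to at most $1$, and since $\bbP(|U_n|<k,\,B(m)\lrgh\infty)\leq\bbP(1\leq|U_n|<k)$ the bound follows in one line. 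You instead index by the order $i$ of the thin scales $\s_1<\s_2<\cdots$, define the capping events $\mathrm{Block}_{\s_j}$, and run a genuine renewal argument $p_{i+1}\leq(1-\d)p_i$, using that $\{B(m)\lrgh\infty\}\subset\bigcap_n\mathrm{Block}_n^c$ and then converting the sum over scales $n$ into a sum over ranks $i$. Both routes rest on the same conditional capping estimate, but the paper's disjointness observation lets it avoid tracking the entire history of failed caps (and hence the measurability bookkeeping you have to do to justify $p_{i+1}\leq(1-\d)p_i$); your version is more in the spirit of the usual Grimmett--Marstrand ``geometric decay of the number of bad scales'' argument. Two small remarks: your constant $C$ (points of $\ezd$ in a unit ball) scales like $\e^{-d}$, so $c(d)=C\e^d$ does indeed depend only on $d$ as required, but it would be worth saying this explicitly; and your $Y_n$ uses the looser radius $1$ rather than the sharp edge length $1-3\a$, which is harmless since it only enlarges the set that must be vacated.
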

\begin{proof}   We claim that   the event $\{B(m) \lrgh \infty\text{ for } \rrr{\bbG_-}\}$ implies that   $|U_n|\geq 1$. 
To prove our claim we observe that, 
 since the edges in $\rrr{\bbG_-}$ have length at most  $1-3\a $ \rrr{(see Warning \ref{aaah})}, 
the event $\{B(m) \lrgh \infty\text{ for } \rrr{\bbG_-} \}$ implies that there exists $x \in A(n) $ such that $B(m) \lrgh x \text{ in } B(n) $ for $ \rrr{\bbG_-}$  and  $\{x , y\}\in \rrr{\bbE_-}$ for some $y\in   B(n)^c\cap \rrr{\bbV_-}$. Indeed, it is enough to take 
any path from $B(m)$ to $\infty$ for $ \rrr{\bbG_-} $ and define $y$ as the first visited point in $B(n)^c$ and $x$ as the 
point visited before $y$. Note that the  property  $\{x,y\}\in \rrr{\bbE_-} $ implies \eqref{rotta}\rrr{.}  Hence $x\in U_n$. This concludes the proof of our claim.
Due to the above claim   we have 
\be\label{torino1}
\bbP(|U_n|<k,\,B(m) \lrgh \infty\text{ for } \rrr{\bbG_-}) \leq \bbP ( 1\leq |U_n | < k) \,.
\en

\smallskip
We now want to estimate  $\bbP ( U_{n+1}=\emptyset\,|\, 1\leq |U_n | <k)$ from below (the result will be  given in  \eqref{torino2} below).

For each $x \in U_n$ we denote by 
$I_{n+1}(x)$ the set of points   $y$  in $A(n+1)$ such that $|x-y|\leq 1-3\a$. 
We call $G_n$ the event that  $\rrr{\bbV_-}$ has no points in $\cup _{x \in U_n} I_{n+1}(x)$. 
 We now claim that  $G_{n} \subset \{U_{n+1}=\emptyset\}$. To prove our claim let $z$ be in 
  $ U_{n+1}$.  Then there is a path in  $\rrr{\bbG_-}$ from  $z$ to some  point in $B(m)$ visiting only points in  $B(n+1)$. We call $v$ the last point in the path inside $A(n+1)$  and $x$ the next point in the path. Then  $x\in A(n)$ and  all the points visited by the path after $x$ are in $B(n)$. Hence,  $B(m) \lrgh x$ in $ B(n)$ for $ \rrr{\bbG_-}$.  Moreover, since $\{ x, v\}\in \rrr{\bbE_-}$, property \eqref{rotta} is verified. Then $x\in U_n$ and $\rrr{\bbV_-} $ has some point (indeed $v$) in $I_{n+1}(x)$. 
 In particular, we have shown that, if $U_{n+1}\not=\emptyset$, then $G_n$ does not occur, thus proving our claim.

 Recall that 
  the graph $\rrr{\bbG_-}$ depends only on the random field  $(A_z)_{z\in \ezd}$ and that $\bbP(\rrr{A_z\not \in \bbR})= e^{-\l_* \e^d}$ for any $z\in \ezd$.
We call $\cF_n$ 
 the $\s$--algebra 
 generated by the random variables $A_z$ with  $z\in B(n)$.
   Note that the set   $\cup_{x\in U_n} I_{n+1}(x)$
    and the event  $\{1\leq |U_n |< k\}$ are $\cF_n$--measurable. Moreover, 
   on the event $\{1\leq |U_n |< k\}$, the set  $\cup_{x\in U_n} I_{n+1}(x)$  has cardinality  bounded by $c(d) k \e^{-d} $, where $c(d)$ is a positive constant depending only on $d$. By the independence of the $A_z$'s  we conclude that  that $\bbP$--a.s. on the event  $\{1\leq |U_n | < k\}$ it holds
\be\label{girino}
\begin{split}
\bbP ( G_n\,| \, \cF_n)& =\bbP( \rrr{A_z\not\in \bbR} \; \forall z\in   \cup_{x\in U_n}  I_{n+1}(x) \,|\, \cF_n)
\\
& \geq \bbP(\rrr{A_0\not\in \bbR})^{c(d) k \e^{-d}}=   e^{- c(d) \l_* k } \,.
\end{split}\en
Hence, since $G_n \subset \{ U_{n+1}=\emptyset\}$, by \eqref{girino} we conclude that 
\be\label{torino2}
\bbP ( U_{n+1}=\emptyset\,|\, 1\leq |U_n | < k) \geq \bbP ( G_n \,|\, 1\leq |U_n |< k) \geq \exp \{ - c(d)  \l_* k\}\,.
\en
As a byproduct of \eqref{torino1} and \eqref{torino2} we get 
\begin{multline}
e^{ - c(d)  \l_*  k}  \bbP(|U_n|<k,\,B(m) \lrgh \infty\text{ for }\rrr{\bbG_-}) 
\leq e^{ -  c(d)  \l_*   k}  \bbP ( 1\leq |U_n | < k)\\
\leq
\bbP ( U_{n+1}=\emptyset\,|\, 1\leq |U_n | < k)  \bbP ( 1\leq |U_n | < k)\\
=\bbP ( U_{n+1}=\emptyset\,,\, 1\leq |U_n | <k)  \,.
\end{multline}
Since the events $\{ U_{n+1}=\emptyset\,,\, 1\leq |U_n |<k\}$ are disjoint, we get \eqref{LP}.
\end{proof}
We now present the analogous of \cite[Lemma 4]{GM}.  \rui{To this aim we introduce the set $V_n$:
\begin{Definition}[Set $V_n$] Given \kui{positive integers} $n>m$, we  call $V_n$ 
 the set of points $x\in T(n)$ satisfying \eqref{rotta} and  such that $B(m) \lrgh x$  in $ B(n)$ for $ \bbG_-$.
\end{Definition}}
\begin{Lemma}\label{analogo4} Let $w:= 2^d d$. Then, for any $\ell \in \bbN$, it holds 
\be\label{minerva}
\liminf _{n \to \infty}\bbP( |V_n | \geq \ell ) \geq 1- \bbP (B(m)\not \lrgh \infty \text{ for } \rrr{\bbG_-}  )^{1/w }  \,.
\en
\end{Lemma}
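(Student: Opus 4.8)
The plan is to derive \eqref{minerva} from Lemma~\ref{analogo3} by splitting the annulus $A(n)$ into its $w=2^dd$ pieces $T_{\s,J}(n)$, $\s\in\{-1,1\}^d$, $J\in\{1,\dots,d\}$, each of which is an isometric copy of $T(n)=T_{\underline1,1}(n)$, and then running the Grimmett--Marstrand square--root trick. First I would introduce, for each $(\s,J)$, the random set $V_n^{(\s,J)}$ of points $x\in T_{\s,J}(n)$ satisfying \eqref{rotta} and such that $B(m)\lrgh x$ in $B(n)$ for $\bbG_-$, so that $V_n=V_n^{(\underline1,1)}$. By Lemma~\ref{euclide}(i) one has $A(n)=\bigcup_{\s,J}T_{\s,J}(n)$, hence $U_n=\bigcup_{\s,J}V_n^{(\s,J)}$, so $|U_n|\le\sum_{\s,J}|V_n^{(\s,J)}|$ and the event $\{|U_n|\ge w\ell\}$ is contained in $\bigcup_{\s,J}\{|V_n^{(\s,J)}|\ge\ell\}$.

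The first step is a symmetry argument. The isometry $\psi_{\s,J}$ of Lemma~\ref{euclide}(ii) restricts to a bijection of $\ezd$, maps $T(n)$ onto $T_{\s,J}(n)$, and fixes the cubes $B(m)$ and $B(n)$. Since $\bbG_-$ depends on the geometry only through Euclidean distances and on the i.i.d.\ field $(A_z)_{z\in\ezd}$, the field $(A_{\psi_{\s,J}(z)})_{z\in\ezd}$ has the same law as $(A_z)_{z\in\ezd}$; performing the change of variable $x=\psi_{\s,J}(y)$ in the definition of $V_n^{(\s,J)}$ (and using $d(\psi_{\s,J}(y),B(n)^c)=d(y,B(n)^c)$ together with the $\psi_{\s,J}$--invariance of $B(m),B(n)$) I would conclude that $|V_n^{(\s,J)}|$ has the same distribution as $|V_n|$ for every $(\s,J)$.

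The second step is monotonicity plus FKG. I would check that, for each fixed $x$, the event $\{x\in V_n\}$ is increasing for the partial order $\preceq$ on $(\Theta,\bbP)$: the connectivity requirement is increasing because $\theta_1\preceq\theta_2$ implies $\bbG_-(\theta_1)\subset\bbG_-(\theta_2)$, and \eqref{rotta} is increasing because by Assumption~(A5) the map $a\mapsto h_*(a)=\sup_{b\in\D}h(a,b)$ is monotone in the direction that makes the right--hand side of \eqref{rotta} grow along $\preceq$. Hence $|V_n|$ and each $|V_n^{(\s,J)}|$ are increasing functions, so the $\{|V_n^{(\s,J)}|<\ell\}$ are decreasing events. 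Applying the FKG inequality for $\preceq$ (which, since we deal with i.i.d.\ variables, also holds for finite families of decreasing events), together with the inclusion from the first paragraph and the equality of laws from the first step, gives
\[
1-\bbP\bigl(|U_n|\ge w\ell\bigr)\ \ge\ \bbP\Bigl(\bigcap_{\s,J}\{|V_n^{(\s,J)}|<\ell\}\Bigr)\ \ge\ \prod_{\s,J}\bbP\bigl(|V_n^{(\s,J)}|<\ell\bigr)=\bigl(1-\bbP(|V_n|\ge\ell)\bigr)^{w},
\]
that is, $\bbP(|V_n|\ge\ell)\ge 1-\bigl(1-\bbP(|U_n|\ge w\ell)\bigr)^{1/w}$.

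Finally I would pass to the limit. Lemma~\ref{analogo3} with $k=w\ell$ shows that the series $\sum_n\bbP(|U_n|<w\ell,\,B(m)\lrgh\infty\text{ for }\bbG_-)$ converges, so $\bbP(|U_n|<w\ell,\,B(m)\lrgh\infty\text{ for }\bbG_-)\to0$ and therefore $\bbP(|U_n|\ge w\ell)\ge\bbP(B(m)\lrgh\infty\text{ for }\bbG_-)-o(1)$; hence $\liminf_n\bbP(|U_n|\ge w\ell)\ge\bbP(B(m)\lrgh\infty\text{ for }\bbG_-)=1-\bbP(B(m)\not\lrgh\infty\text{ for }\bbG_-)$. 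Since $t\mapsto1-(1-t)^{1/w}$ is continuous and non--decreasing on $[0,1]$, taking $\liminf$ in the bound of the second step yields \eqref{minerva}. I expect the delicate points to be the monotonicity of \eqref{rotta} with respect to $\preceq$ (which rests on Assumption~(A5) and on the definition of $h_*$) and the bookkeeping in the symmetry step; everything else is the classical square--root argument.
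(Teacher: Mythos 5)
Your proposal is correct and follows essentially the same route as the paper: split $A(n)$ into the $w=2^dd$ pieces $T_{\s,J}(n)$, bound $|U_n|$ by $\sum_{\s,J}|V_n^{(\s,J)}|$, use symmetry to equate the laws of the $|V_n^{(\s,J)}|$, apply FKG for the decreasing events $\{|V_n^{(\s,J)}|<\ell\}$ to get the power bound, and invoke Lemma~\ref{analogo3} for the limit. The only difference is cosmetic: you spell out in more detail the $\preceq$-monotonicity of $\{x\in V_n\}$ (including why \eqref{rotta} is increasing via $h_*$) and the change-of-variable step, which the paper states more tersely.
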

\begin{proof} 
Let $\s,J$ be as in Definition \ref{sambina}.
If in the definition of $V_n$  we take  $T_{\s, J}(n) $  instead of $T(n) $,  then we call $V_{\s,J,n}$ the resulting  set.  Note that $V_{\underline{1}, 1,n}=V_n$.
%
By Lemma \ref{euclide}--(i)  we get that $|U_n| \leq \sum _{(\s,J) } |V_{\s,J,n}|$, hence
\be
\{ |U_n| < w \ell\} \supset \cap _{(\s,J)} \{ |V_{\s, J,n}  | < \ell \} \,.
\en
 By the FKG inequality \rrr{(cf. Section \ref{scremato})} and since each event $\{ |V_{\s, J,n}  | < \ell \} $ is decreasing, and by the isometries given in Lemma \ref{euclide}--(ii), we have \[ \bbP (  |U_n| < w \ell ) \geq \prod _{(\s,J)} \bbP( |V_{\s, J,n} | < \ell )=\bbP( |V _n | < \ell )^w\,. \]
The above bound implies that $ \bbP( |V_n | \geq \ell )\geq 1- \bbP ( |U_n|< w \ell ) ^{1/w}$.
On the other hand we have
\be
\begin{split} \bbP ( |U_n| < w\ell) &  \leq  \bbP ( |U_n| < w\ell\,,\, B(m) \lrgh \infty \text{ for } \rrr{\bbG_-} )\\
&+ \bbP (B(m)\not \lrgh \infty \text{ for } \rrr{\bbG_-})
\end{split}
\en
and by Lemma \ref{analogo3} the first term in the r.h.s. goes to zero as $n \to \infty$, thus implying the thesis.
\end{proof}

We can finally give the proof of Proposition \ref{cinquina}. 
\begin{proof}[Proof of Proposition \ref{cinquina}]
By Lemma \ref{john} $\rrr{\bbG_-}$  percolates $\bbP$--a.s., hence we can fix  an integer $m>2$ such that  
\be \label{bracciano}
\bbP( B(m) \not \lrgh \infty \text{ for } \rrr{\bbG_-} )< (\eta/2)^w\,, \qquad w:=d2^d\,.
\en
 Then, by Lemma \ref{analogo4}, 
 for any $\ell \in \bbN$ we have 
\be\label{crimine} \liminf _{n \to \infty}\bbP( |V_n | \geq \ell ) \geq 1- \bbP (B(m)\not \lrgh \infty \text{ for } \rrr{\bbG_-} )^{1/w} > 1-\eta/2\,.
\en

We set $\rho := \bbP ( B(m) \text{ is  a seed}) \in (0,1)$ and fix an integer $M$   large enough that $(1-\rho)^M<\eta/2$. 
 We set $\ell:= (2m)^{d-1} 3^{d-1}M\e^{-d}$ and, by \eqref{crimine}, we can fix $n$
large enough that $\bbP( |V_n | \geq \ell ) >1-\eta/2$, $2m<n$ and $2m|n$.

\rui{The main idea behind the proof is the following: by the above choice of constants and since points in $\ezd$ have distance  at least  $\e$, points in  $V_n\subset T(n)$  must be enough spread that with high probability some point $x\in  V_n$ is in the proximity of a seed $S$ contained in the slice $\{ z\in \ezd\,:\, n+\e\leq z_1 \leq n+\e+2m\}$. Then,  since $x $ satisfies \eqref{rotta}, we will show that  $x$ must be  adjacent inside $\bbG$ to the seed $S$ and  hence $x\in K(m,n)$. Using  that 
$B(m) \lrgh x$  in $ B(n)$ for $ \bbG_-\subset \bbG$, we will then conclude that $B(m) \leftrightarrow K (m,n)$ in $B(n)$ for $\bbG$. }

\rui{Let us implement the above scheme}.
Since $2m|n$ we can partition $[0,n]^{d-1}$ in  non--overlapping $(d-1)$--dimensional closed boxes $D_i^*$, $i \in \cI$,  of side length $2m$ (by ``non--overlapping'' we mean  that the interior parts are disjoint). We set $D_i:= D_i^*\cap \ezd $. Note that
$T(n) \subset  \cup _{i \in \cI} (n-1,n]\times D_i$ and  $T(m,n) = \cup _{i \in \cI} ([n+\e, n+\e+2m]\cap \e \bbZ) \times D_i$. 
%

By construction, any set $(n-1,n]\times D_i$ contains  at most $(2m)^{d-1} \e^{-d}$ points $x\in T(n)$.  Since  $\ell= (2m)^{d-1} 3^{d-1}M\e^{-d}$, the event $\{|V_n | \geq \ell\}$ implies that there exists $\cI_*\subset \cI$ with $|\cI_*| =3^{d-1}M$  fulfilling the following property: 
for any $k\in \cI_*$ there exists $x\in V_n$ with $x\in(n-1,n]\times D_k$.  We can choose univocally  $\cI_*$   by defining it as the set of  the first (w.r.t.  the lexicographic order) $M$ indexes $k\in \cI $ satisfying the above property. We now thin $\cI_*$ since we want to deal with disjoint sets $D_k$'s. To this aim we observe   that each $D_k$ can intersect at most $3^{d-1}-1$ other sets of the form $D_{k'}$. Hence, there must exists $\cI_\natural \subset \cI_*$ such that $D_k\cap D_{k'}=\emptyset $ for any $k\not = k'$ in $\cI_\natural$ and such that $|\cI_\natural| = M$ (again $\cI_\natural$ can be fixed deterministically by using the lexicographic order). We introduce the events
\be\label{schiavo} G_k:=\{ ([n+\e,n+\e+ 2m] \cap \e \bbZ) \times D_k   \text{ is a seed} \}\,.\en
We claim that 
\be\label{cedric}
\bbP \left(\, \{ |V_n| \geq \ell \} \cap  (\cup_{ k \in \cI_\natural} G_k)  \,\right) \geq 1-\eta\,.
\en
\rui{Before  proving our claim we show that the event in  \eqref{cedric} implies the event in \eqref{maggiolino}, thus allowing to conclude. Hence, 
let} us now suppose that $ |V_n | \geq \ell $ and that the event $ G_k $ takes place for some $k \in \cI_\natural$. We claim that necessarily 
$B(m) \leftrightarrow K  (m,n)$  in $B(n)$ for $\bbG$.  Note that the above claim and \eqref{cedric} lead to \eqref{maggiolino}.
We prove our claim. As discussed before \eqref{schiavo}, since $k\in \cI_\natural$  there exists  $x\in V_n\cap( (n-1,n]\times D_k)$.
Let $S $ be the   seed $([n+\e,n+\e+ 2m] \cap \e \bbZ) \times D_k $.
By definition of $V_n$, \rrr{$ d\bigl( x, B(n)^c \bigr) \leq h_*(A_x) -3\a$}  and $B(m) \lrgh x$ in $ B(n)$ for $ \rrr{\bbG_-}$.
  Call $x'$ the point in  $\partial B(n)$
  such that $|x-x'| = d\bigl(x, B(n)^c\bigr)$.  Note that $x'\in \{n\}\times D_k $ as $V_n \subset T(n)$. Let $y:=x'+\e e_1$. Then $y\in S$ and therefore  \rrr{$A_y \in U_*=U_*(\a/2)$} (as $S$ is a seed) and $|x'-y|=\e  \leq  \a/100$ \rrr{(cf. Definition \ref{vinello})}. 
  Then, using \eqref{indigestione} with $\d=\a/2$, the symmetry of $h$ and that $A_y \in U_*$,  we get $h_*(A_x)=\sup _{a \in \D} h(A_x,a ) \leq h (A_x,A_y)+\a/2$. Hence, we obtain
 \begin{multline}
       |x-y|  \leq |x-x'| +|x'-y| 
     \leq d\bigl(x, B(n)^c\bigr)+ \a/100 \leq  h_*(A_x)-3\a+ \a/100 \\
     \leq h(A_x,A_y)+ \a/2 -3\a+\a/100 \leq  h(A_x,A_y)-2\a\,. 
     \end{multline}
    We have therefore shown that  $B(m) \lrgh x$ in $ B(n)$ for $ \rrr{\bbG_-}$ for some $x\in T(n)$ with  $\{x,y\}\in \bbE$ for some  $y \in S$ \rrr{(cf. Definition \ref{vichinghi})}. As a consequence, $x\in K(m,n)$. Since $  \rrr{\bbG_-}  \subset \bbG$, we get that  $B(m) \leftrightarrow K  (m,n)$    in $B(n)$ for $ \bbG$.

\rui{It remains now to prove \eqref{cedric}}. 
To this aim we call $\cF_n$ the $\s$--algebra 
 generated by the r.v.'s $A_z$ with   $z\in B(n)$. We observe that the event $\{  |V_n | \geq \ell \}$ belongs to $\cF_n$,   the set $\cI_\natural $ is $\cF_n$--measurable and  w.r.t. $\bbP(\cdot| \cF_n)$ the events  $\{ G_k:k \in \cI_\natural \}$ are independent (recall that  $D_k\cap D_{k'}=\emptyset $ for any $k\not = k'$ in $\cI_\natural$)
 and each $G_k$ has  probability $\rho:= \bbP ( B(m) \text{ is  a seed})$. Hence,  $\bbP$--a.s. on the event $\{ |V_n | \geq \ell\}$ we can bound 
\be\label{curdo}
 \bbP( \cup_{  k \in \cI_\natural} G_k \,|\, \cF_n) \geq 1-(1-\rho)^M> 1-\eta/2\,. \en Note that the last bound follows from our choice of $M$.
Since, by our choice of $n$, $\bbP( |V_n | \geq \ell ) >1-\eta/2$, we conclude that 
the l.h.s. of \eqref{cedric} is lower bounded by $(1-\eta/2)^2>1-\eta$. This concludes the proof of \eqref{cedric}.
 \end{proof}


\section{Proof of Lemma \ref{pierpilori}}\label{trieste65} 
\rrr{Note that $ \g:= \bbP(T^{(j)}_0\in U_*)>0$ due to  Assumption (A4) and Definition \ref{cavallo}}.
We can fix a positive constant $c(d)$ such that  the ball $\{y \in \bbR^d\,:\, |y| \leq 2\}$  contains at most $c(d) \e^{-d}$ points of $\ezd$.
We then choose $t$ large enough that $ 
 (1- \g^2) ^{(t \e^d /c(d)) -1 }
  \leq \e'/2$. Afterwards 
  we choose $\eta>0$ 
small enough so that  $ (1-p) ^{-t} \eta \leq \e'/2 $, where
\be \label{pippi}
p:=\bbP(\rrr{A_x\in \bbR})=1-\exp\{- \l_* \e^d\}<1\,.
\en
  Then we take  $m=m( \eta)$ and $n=n( \eta)$ as in Proposition \ref{cinquina}. In particular, \eqref{maggiolino} holds and moreover
\be\label{danza}
[1- (1-p) ^{-t} \eta ]\, [ 1-  (1- \g^2 ) ^{(t \e^d /c(d)) -1}  ]\geq (1-\e'/2)^2 >1 -\e'\,.
\en
\begin{Remark}\label{caffeina}
As $\eta \leq \e'/2$, from  \eqref{maggiolino} we get that 
\be \label{fenec}
 \bbP \bigl( B(m) \leftrightarrow K (m,n) \text{ in $B(n)$ for } \bbG \bigr) > 1-\e'\,.
\en
This will be used in other sections.
\end{Remark}

\begin{Lemma}\label{falchetto}
In the same context of Lemma \ref{pierpilori}
 let  \begin{equation*}
\begin{split}
V_{R}:= \{x \in \partial  R \cap B(n) \,:\, & \exists y\in B(n) \setminus ( R\cup  \partial  R ) 
 \text{ such that } \\ &
|x-y| \leq \rrr{h_*(A_y)}-2\a \text{ and } \\
&\{y\} \leftrightarrow K(m,n)  \text{ in  $B(n) \setminus ( R\cup  \partial R )$ for $\bbG$}\}\,.
\end{split}
\end{equation*}
Then   we have (recall \eqref{pippi})
\begin{equation}\label{navetta}
\bbP \bigl( |V_{R}| > t\bigr) \geq 1- (1-p) ^{-t} \eta\,.
\end{equation}
\end{Lemma}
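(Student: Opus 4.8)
The plan is to realize $V_R$ as a random cutset that separates $B(m)$ from $K(m,n)$ inside $B(n)$ for $\bbG$, to observe that $V_R$ depends only on the marks at sites outside $R\cup\partial R$, and then to amplify the high-probability bound \eqref{maggiolino} by an argument in which all vertices of $V_R$ are simultaneously switched off.

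First I would prove the cutset property: if $(x_1,\dots,x_k)$ is a path in $\bbG$ with $x_1\in B(m)$, $x_k\in K(m,n)$ and $x_1,\dots,x_k\in B(n)\cap\bbV$, then some $x_j$ lies in $V_R$. Let $j$ be the largest index with $x_j\in R\cup\partial R$; this is well defined since $x_1\in B(m)\subset R$, and $j<k$ since $x_k\in K(m,n)\subset T(n)$ is disjoint from $R\cup\partial R$ by \eqref{mare100}. Because every edge of $\bbG$ has length at most $1-2\a$ while $\partial R$ collects all lattice points at distance $\le 1-2\a$ from $R$, the point $x_j$ cannot belong to $R$ (otherwise $x_{j+1}$ would lie in $\partial R$, contradicting maximality of $j$), hence $x_j\in\partial R\cap B(n)\cap\bbV$. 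The sub-path $(x_{j+1},\dots,x_k)$ stays in $B(n)\setminus(R\cup\partial R)$ and realizes $\{x_{j+1}\}\leftrightarrow K(m,n) \text{ in } B(n)\setminus(R\cup\partial R) \text{ for } \bbG$, while $\{x_j,x_{j+1}\}\in\bbE$ gives $|x_j-x_{j+1}|\le h(A_{x_j},A_{x_{j+1}})-2\a\le h_*(A_{x_{j+1}})-2\a$ by symmetry of $h$ and \eqref{quiquoqua}. Hence $x_j\in V_R$, proving the claim. Next I would record that $V_R$ is measurable with respect to $\sigma\bigl((A_z)_{z\notin R\cup\partial R}\bigr)$: the condition defining membership in $V_R$ only involves the marks $A_y$ at $y\in B(n)\setminus(R\cup\partial R)$, the graph $\bbG$ restricted to $B(n)\setminus(R\cup\partial R)$, and the set $K(m,n)$, which is determined by marks in $T(n)\cup T(m,n)$ — all disjoint from $R\cup\partial R$ by \eqref{mare100}. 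Since the $(A_z)$ are i.i.d., $V_R$ is then independent of $(A_z)_{z\in\partial R}$, and $V_R\subset\partial R\cap B(n)$.

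Then I would carry out the amplification. Set $E:=\{B(m)\leftrightarrow K(m,n) \text{ in } B(n) \text{ for } \bbG\}$ and $E':=\{A_z\notin\bbR \text{ for all } z\in V_R\}$. On $E'$ no site of $V_R$ is a vertex of $\bbG$, so by the cutset property $E$ cannot occur; thus $E'\subset E^c$, and \eqref{maggiolino} yields $\bbP(E')<\eta$. Conditioning on $V_R$ and using the independence above together with $\bbP(A_z\notin\bbR)=1-p$ (by \eqref{pippi}; $A_z$ is infinite precisely when its underlying Poisson count vanishes), I get $\bbP(E')=\bbE\bigl[(1-p)^{|V_R|}\bigr]\ge (1-p)^{t}\,\bbP(|V_R|\le t)$, the inequality holding because $0<1-p<1$ and hence $(1-p)^{|V_R|}\ge(1-p)^{t}$ on $\{|V_R|\le t\}$. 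Combining, $\bbP(|V_R|\le t)<(1-p)^{-t}\eta$, which is exactly \eqref{navetta}.

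I expect the only delicate points to be, first, checking that the geometric relation between the edge lengths of $\bbG$ and the definition of $\partial R$ really forces each $\bbG$-path leaving $R$ to touch $\partial R$ (so that the crossing vertex $x_j$ genuinely lies in $V_R$ rather than being skipped), and second, verifying that no mark of a site in $R\cup\partial R$ enters the definition of $V_R$; both rely on the hypothesis $(R\cup\partial R)\cap(T(n)\cup T(m,n))=\emptyset$ from Lemma \ref{pierpilori}, and the rest is the routine conditioning computation above.
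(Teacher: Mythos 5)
Your proof is correct and takes essentially the same route as the paper: you identify the last index of the path lying in $R\cup\partial R$ to exhibit a point of $V_R\cap\bbV$ (the paper does this in two steps, via the last index in $R$ and then the last index in $\partial R$, but the resulting vertex is the same), observe that $V_R$ is determined by the marks outside $R\cup\partial R$, and then amplify \eqref{maggiolino} by switching off all sites of $V_R$, yielding $\bbP(V_R\cap\bbV=\emptyset)\geq(1-p)^t\,\bbP(|V_R|\leq t)$. The only cosmetic difference is that you compute $\bbE[(1-p)^{|V_R|}]$ directly, while the paper restricts to $\{|V_R|\le t\}$ before expanding over $\{V_R=B\}$; the estimates are identical.
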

We postpone the proof of Lemma \ref{falchetto} to Subsection \ref{sec_moka18}. \rui{We point out that our set $V_R$ plays the same role of the set $U$ in \cite[Lemma 2]{GM}.  As detailed in  the proof of Lemma \ref{falchetto}, given a path $(x_0,x_1, \dots, x_k)$  from $B(m)$ to $K(m,n)$ inside $\bbG$ with all vertexes in $B(n)$ and called $x_\ell$ the last vertex  of the path inside $\partial R$, it must be  $x_\ell\in V_R$, while $x_{\ell+1}$  plays the role of $y$ in the definition of $V_R$ for $x:=x_\ell$.}
%
%
%

\begin{Remark}\label{iacopo}
The random set $V_R$   depends  only on $ A_x$ with $x\in B(n) \setminus ( R\cup  \partial R )$ and $A_x$ with $x\in T(m,n)$. Indeed, to determine $K(m,n)$, one needs to know the seeds inside $T(m,n)$.
\end{Remark}

Given $x\in \partial R $ we define   $x_*$ as the minimal (w.r.t. lexicographic order) point $y\in R$ such that $|x-y|\leq 1-2\a$. Note that $x_*$ exists for any  $x\in V_R$ since $V_R \subset \partial  R$. 
Let us show  that $F\subset G$, where
\[ F:=\{  
\exists x \in V_R \text{ with } \rrr{T^{(k_*)} _x \in U_*\,,\; T_{x_*}^{(k_*)} \in U_*}
\}
\,.
\]
To this aim,  suppose  the event  $F$ to  be fulfilled and take $ x \in V_R$ with  \rrr{$ T^{(k_*)} _x \in U_*$ and $ T_{x_*}^{(k_*)} \in U_*
$}. Since $x\in V_R$, by definition of $V_R$  there exists $ y\in B(n) \setminus ( R\cup  \partial  R )$ 
 such that 
$|x-y|\leq\rrr{ h_*( A_y)} -2\a$ and  there exists a path $(y,z_3, z_4,  \dots, z_\ell)$  inside  $\bbG $ connecting $y$ to 
$ K(m,n)$ with vertexes in 
  $B(n) \setminus ( R\cup  \partial  R )$. We  set 
 $z_0:=x_*$, $z_1=x$, $z_2:=y$. Then the event $G$ is satisfied by the string $(z_0, z_1, \dots, z_\ell)$. This proves that $F\subset G$.

Since $F\subset G$  we can estimate
\be\label{alba}
 \bbP( G\,|\ H)   \geq \bbP  \bigl(  |V_R| > t \,,\, F \,|\, H  \bigr)  = \sum_{\substack{B \subset \partial  R \cap B(n) :\\ |B|>t }}  
   \bbP  \bigl(  V_R =B\,,\, F _B\,|\, H  \bigr) \,,
   \en
   where 
   \[ F_B:= \{  
\exists x \in B \text{ with } \rrr{T_x ^{(k_*)}\in U_* \,,\;  T_{x_*}^{(k_*)} \in U_*}
\}\,.
\]
The event $F_B$ is determined by the random variables $\{ T^{(k_*)}_x \}_{ x\in D}$.
 In particular  (cf. Remark
 \ref{iacopo}) the event $\{ V_R =B\}\cap F_B$ is determined by  \[
 \begin{cases}
 T^{(k_*)}_x & \text{ with } x\in D\,,\\  
 A_x & \text{ with } x\in B(n) \setminus ( R\cup  \partial  R ) \text{ and with } x \in T(m,n)\,.
 \end{cases}
 \]
Since by assumption  $H$ is $\cF$--measurable, and due to conditions  \eqref{mare100} and  \eqref{monti100}, we conclude that the event
$\{ V_R =B\}\cap F_B$ and $H$ are independent. Hence $ \bbP  \bigl(  V_R =B, F _B\,|\, H  \bigr)= \bbP  \bigl(  V_R =B,F _B\bigr)  $. In particular, coming back to \eqref{alba}, we have
\be\label{olleboiccic}
\bbP( G\,|\ H )    \geq   \sum_{\substack{B \subset \partial R \cap B(n) :\\ |B|>t }}  
 \bbP( V_R=B\,,\, F_B)
\,.
\en

To deal with  $ \bbP( V_R=B, F_B)$ we observe that the events $\{ V_R=B\}$ and $ F_B$ are independent 
(see Remark \ref{iacopo}),  hence we get 
 \be\label{theverde}
 \bbP(  V_R=B\,,\,  F_B )=
  \bbP(  V_R=B)\bbP( F_B )\,.
\en
It remains to lower bound $\bbP(F_B)$.   We first show that there exists a subset  $\tilde B\subset B$ such that 
\be \label{stima}
|\tilde B | \geq  |B| \e^d /c(d) -1
\en
and  such that  all points of the form  $x$ or $x_*$, with $x\in \tilde B$,  are distinct. We recall that the positive constant $c(d)$ has been introduced at the beginning of \rrr{Section}  \ref{trieste65}.
To build the above set $\tilde B$ we recall that  $B \subset \partial R $ and that,  for any $x \in B$, it holds   $|x-x_*|\leq 1- 2\a$ and $x_* \in  R$.  As a consequence, given $x,x'\in B$, $x_* $ and $x'_*$ are distinct if $|x-x'| \geq 2  $ and moreover any point of the form $x_*$  with $x\in B$ cannot coincide with a point in $B$.  Hence it is enough to exhibit a subset $\tilde B \subset B$ satisfying \eqref{stima} and  such that  all points in $\tilde B$ have reciprocal distance at least  $2$.  We know that  the ball  $\bbB$ of radius $2$ contains at most $c(d) \e^{-d}$ points of $\ezd$. The  set $\tilde B$ is then  built as follows: choose  a point $a_1 $ in  $B_1:=B$ and define $B_2:= B_1\setminus (a_1+\bbB)$,  then choose  a point  $a_2 \in B_2$ and  define $B_3:= B_2\setminus (a_2+\bbB)$ and so on until possible (each $a_k$ can be chosen as the minimal point w.r.t. the lexicographic order). We call $\tilde B:=\{a_1, a_2, \dots, a_s\}$ the set of chosen points.  Since $|B_k| \geq  |B| - (k-1) c(d)\e^{-d}$, we get that $s=|\tilde B|$ is bounded from below by  the maximal integer $k$ such that  $|B| >(k-1) c(d)\e^{-d}$, i.e. $\lfloor |B| \e^d/c(d)\rfloor  >k-1$.
  Hence,  $s=|\tilde B|\geq  \lfloor  |B| \e^d/c(d)\rfloor $.  By the above observations, $\tilde B$ fulfills the desired properties.

  Using $\tilde B$ and independence, we have 
  \be\label{agg}
\begin{split}
\bbP(F_B)&=1-\bbP(\cap_{x\in B}\{ \rrr{T_x^{(k_*)}\in U_*,\,T_{x_*}^{(k_*)}\in U_*}\}^c) \\
& \geq 1-\bbP(\cap_{x\in \tilde B}\{\rrr{
T_x^{(k_*)}\in U_* ,\,T_{x_*}^{(k_*)}\in U_*}\}^c)
\\ &=1-\prod_{x\in \tilde B}(1-\bbP(\rrr{T_x^{(k_*)}\in U_*})\bbP(\rrr{T_{x_*}^{(k_*)}\in U_*}))
\\&=1-(1-\g^2)^{|\tilde B|}\,.
\end{split}
\en

As a byproduct of  \eqref{olleboiccic}, 
 \eqref{theverde}, \eqref{stima} and \eqref{agg} and finally using   \eqref{navetta} in Lemma \ref{falchetto} we get  
\be
\begin{split}
 \bbP( G\,|\ H )    & \geq   \sum_{\substack{B \subset \partial  R \cap B(n) :\\ |B|>t }}  \bbP ( V_R=B)  \left(1-(1- \g^2) ^{|\tilde B|}\right)
\\
& \geq   \left(1-(1- \g^2) ^{(t \e^d /c(d) ) -1 }\right) \sum_{\substack{B \subset \partial  R \cap B(n) :\\ |B|>t }}  \bbP ( V_R=B)  
\\
&=  \left(1-(1- \g^2) ^{(t \e^d /c(d)) -1 }\right)  \bbP ( |V_R|>t)  \\
&\geq \left[  1- (1- \g^2 ) ^{(t \e^d /c(d) )-1}  \right]\, \left [ 1- (1-p) ^{-t} \eta
\right] \,.
\end{split}
\en
Finally, using \eqref{danza} we   conclude the proof of Lemma \ref{pierpilori}.
%
\subsection{Proof of Lemma \ref{falchetto}}\label{sec_moka18} 
%
%
%
Suppose that $B(m) \leftrightarrow K  (m,n)$ in $B(n)$ for $ \bbG$. Take a path $(x_0,x_1, \dots, x_k)$ from $B(m)$ to $ K  (m,n)$ inside $ \bbG$ with all vertexes $x_i$ in $B(n)$.
Recall that $K  (m,n) \subset T(n)$ and $R\cup  \partial R $  is disjoint from $T(n)$ by \eqref{mare100}. In particular,    $R\cup  \partial R $  is disjoint from $K (m,n)$. 
Since $B(m)\subset R$,  the path starts at  $R$. Let $x_r$ be  the last point of  the path contained in $R$. Since $R$ is disjoint from $K  (m,n)$ and $x_k\in K (m,n)$, it must be  $r<k$. Necessarily, $x_{r+1}\in \partial R $. Call $x_\ell$ the last point of the path contained in $\partial  R$.  It must be $\ell< k$ since 
$\partial   R$ is disjoint from $K (m,n)\ni x_k$. 
We claim that  $x_\ell \in V_R$ and \rrr{$A_{x_\ell} \in \bbR $}.  To prove our claim we observe that the last property follows from the fact that 
all points $x_0,x_1, \dots, x_k$ are in $ \bbV$. Recall that these points  are also in $B(n)$. Hence $x_\ell \in  \partial R \cap B(n)$.   Since $\{x_\ell, x_{\ell+1}\}\in \bbE$,  we have \rrr{$|x-y|\leq h_*( A_y )-2\a$} with $x:= x_{\ell}$ and $y:= x_{\ell+1}$.  Finally, it remains to observe that 
$(x_{\ell+1}, \dots, x_k)$ is a path  connecting $x_{\ell+1}$ to $x_k \in K(m,n)$ in 
  $B(n) \setminus ( R\cup  \partial R )$ for $\bbG$. Hence, $x_\ell \in V_R$.

We have proved that if $B(m) \leftrightarrow K (m,n)$ in $B(n)$ for $ \bbG$, then $V_R$ contains at least a  vertex of $\bbV$. As a byproduct with 
 \eqref{maggiolino}  \rrr{we} therefore have
\be\label{lalla1}
\eta> \bbP \bigl( B(m) \not  \leftrightarrow K (m,n) \text{ in $B(n)$ for } \bbG \bigr) \geq \bbP( V_R\cap \bbV=\emptyset)  
\,\rrr{.}
 \en
 On the other hand, we can bound
 \be \label{lalla2}
 \bbP( V_R\cap \bbV=\emptyset )  \geq \bbP( V_R\cap \bbV=\emptyset\,,\, |V_R| \leq  t)\,.
 \en
 Note that $V_R$  and $(A_x)_{x\in \partial  R}$ are independent (see Remark \ref{iacopo}).
Hence
\be\label{lalla3}
\begin{split}
&  \bbP( V_R\cap \bbV=\emptyset \,,\, |V_R| \leq  t)= \sum _{
\substack{
B\subset \partial R\,:\,\\
|B|\leq  t}}   \bbP( V_R=B\,,\, \rrr{A_x\not\in \bbR} \; \forall x\in B)\\
& =\sum _{
\substack{
B\subset \partial  R\,:\,\\
|B|\leq   t}}   \bbP( V_R=B) \bbP( \rrr{A_x \not\in \bbR} \; \forall x\in B)\\
& =\sum _{
\substack{
B\subset \partial  R\,:\,\\
|B|\leq   t}}   \bbP( V_R=B) (1-p)^{|B|} 
\\
&\geq \sum _{
\substack{
B\subset \partial  R\,:\,\\
|B|\leq   t}}   \bbP( V_R=B) (1-p)^{t}=\bbP( |V_R|\leq t ) (1-p)^{t}\,.  
 \end{split}
 \en
 By combining \eqref{lalla1}, \eqref{lalla2} and \eqref{lalla3} we get that $\eta \geq \bbP( |V_R|\leq t ) (1-p)^{t}$, which is equivalent to \eqref{navetta}.

\section{Proof of Lemma \ref{ciak2011} and Lemma \ref{piccolino}}\label{patroclo}
\subsection{Proof of Lemma \ref{ciak2011}}
  Item (i) is trivial and Item (iii) follows from Items (i) and (ii).   Let us assume \eqref{chip} and prove Item (ii).  We claim  that $\{E=\hat E\}\cap \{F=\hat F\}$ equals  $\{E=\hat E\}\cap W$, where $W$ is the event that 
   \begin{itemize}
  \item[(a)] for any $z\in \hat F$ there are points $z_2,\cdots,z_{k-1},z_k=z$  in $ \hat F$ 
  such that \rrr{$A_{z_i}\in \bbR$ for $i=2,\dots,k$},  \rrr{$|z_i-z_{i+1}| \leq h (A_{z_i},A_{z_{i+1}}) -2\a$} for $i=2,\dots, k-1$ and such that 
   \rrr{$|z_1-z_2|\leq h_*(A_{z_2})-2\a$} for some $z_1\in \hat  E $;
   \item[(b)] if  $z \in  B'\setminus (C\cup\partial  C)  $    is such that 
  $\exists  z'\in \hat E$ with \rrr{$|z-z'|\leq h_*(A_{z})-2\a$}, then $z\in \hat F$; 
   \item[(c)] for any $z\in\bigl( B'\setminus (C\cup\partial  C)\bigr) \cap \partial \hat F$    
    there is no $z' \in \hat F$   such that \rrr{$A_{z'}\in \bbR$ and $|z-z'| \leq  h(A_z,A_{z'})-2\a$}.
   \end{itemize}  Before proving our claim, we observe that it allows to conclude the proof of the lemma.
 \rui{Indeed,  due to the definition of $E$,
 the   event $\{E=\hat E\}$    belongs to the $\s$--algebra in  Item (ii) of the lemma. Moreover,  }  
  as  the point $z$ appearing in Item (b) must be in $\partial \hat E$, the event $\{E=\hat E\}\cap W$  belongs to the  \rui{same $\s$--algebra} due to the explicit description \rui{of $W$} given above.
  
   It remains to derive our claim. Due to Item (a), the event  $  \{E=\hat E\}\cap W $ implies that $\{E=\hat E\}\cap \{F\supset \hat F\}$. On the other hand, suppose that   the event $ \{E=\hat E\}\cap W$ takes place and  let  $z\in F$. By Definition \ref{def_triade}   there exists a path $(z_2,\dots,z_k)$ inside $\bbG$ where $z_k=z$, all points $z_2,\cdots,z_k$ are in $B'\setminus (C\cup\partial  C)$ and \rrr{$|z_1-z_2|\leq h_*(A_{z_2})-2\a$} for some $z_1\in \hat  E$.
By Item (b),   $z_2 \in \hat F$. Let $j$ be the maximal index in $\{2,3,\dots, k\}$ such that $z_2,z_3, \dots, z_j \in \hat F$.  Suppose that $j<k$.  As $\{z_j,z_{j+1}\}\in \bbE$, we get that $z_{j+1}\in \partial \hat F$. Since $z_j \in \hat F$, $z_{j+1}\in \bigl( B'\setminus (C\cup\partial  C)\bigr) \cap \partial \hat F$ and 
$\{z_j,z_{j+1}\}\in \bbE$,  we get a contradiction with Item
 (c).    Then, it must be   $j=k$, thus implying that   $z=z_j$ and therefore $z\in \hat F$.
 Up to now, we have proved that $  \{E=\hat E\}\cap W \subset \{E=\hat E\}\cap \{F= \hat F\}$.  
 We observe that, given $z, z_1,z_2,\dots, z_k$ as in Definition \ref{def_triade}, it must be $z_2,\dots, z_k\in F$. This observation and the above Items (a), (b), (c) imply the opposite inclusion 
 $\{E=\hat E\}\cap \{F= \hat F\} \subset   \{E=\hat E\}\cap W $.  

\subsection{Proof of Lemma \ref{piccolino}}
  Recall Definitions  \rrr{\ref{vichinghi_+} and} \ref{vichinghi}.
If $z\in E$, then \rrr{$T_{z}^{(i)}\in\bbR $}  and therefore $z\in \rrr{\bbV_+}$. If $z\in F$, then $z\in \bbV$ (by definition of $F$) and therefore $z\in  \rrr{\bbV_+}$. This implies  that $E, F\subset  \rrr{\bbV_+}$, hence $C'\subset  \rrr{\bbV_+}$.

Since $C$ is connected in $ \rrr{\bbG_+}$ and since $\bbG\subset  \rrr{\bbG_+}$ (in particular the string $(z_2, \dots, z_k)$ appearing in the definition of $F$ is a path in $ \rrr{\bbG_+}$), to prove the connectivity of $C'$ in $ \rrr{\bbG_+}$ it is enough to show the following: 
\begin{itemize}
\item[(i)]
if  $z_0,z_1 \in \rrr{ \bbV_+}$   satisfy $ \rrr{T^{(i)} _{z_0}\in  U_*}$, $ \rrr{ T^{(i)} _{z_1}\in U_*}$ and  $|z_0-z_1|\leq 1-2\a$, then $\{z_0,z_1\}\in  \rrr{\bbE_+}$;
\item[(ii)]
if   $z_1,z_2 \in  \rrr{\bbV_+}$ satisfy  $ \rrr{T^{(i)} _{z_1}\in U_*}$, \rrr{$A_{z_2}\in\bbR$}  and  $\rrr{|z_1-z_2|\leq h_*( A_{z_2}) -2\a}$, then $\{z_1,z_2\}\in  \rrr{\bbE_+}$.
\end{itemize}

 Using the assumptions of Item (i)  we get \rrr{(recall \eqref{mango} and Assumption (A5))}
   \be\label{lorenzo18}  
\rrr{  |z_1-z_0|  \leq 1-2\a \leq  h(  T_{z_1}^{ (i)} ,T_{z_0}^{ (i)})-\a\leq h( A^{\rm au}_{z_1}   ,A ^{\rm au}_{z_0} )-\a\,.
}
\en
\rrr{The above estimate implies that  $\{z_0,z_1\}\in \bbE_+$}.

 Using the assumptions of Item (ii) we get 
\be\label{pierpaolo18}
   |z_1-z_2| \leq  \rrr{h_*( A_{z_2}) -2\a  \leq h( T^{(i)}_{z_1},A_{z_2}) +\a/2-2\a \leq  h( A^{\rm au}_{z_1}   ,A ^{\rm au}_{z_2} )-\a }\,.
\en
\rrr{Note that in the second bound we have used that $T^{(i)}_{z_1}\in U_*=U_*(\a/2)$ (see Assumption (A4)), while in the third bound we have used Assumption (A5). Eq.~\eqref{pierpaolo18} implies that $\{z_1,z_2\}\in \bbE_+$.}

\section{Proof of Propositions \ref{prop_occ_origin} and \ref{prop_occ_e1}}\label{puffo1}
By  iteratively  applying Lemma \ref{piccolino} and using  Remark \ref{gomitolo1} as starting point, we get that \rrr{$C_2, \dots, C_{5}$} are connected subsets in $\rrr{\bbG_+}$, if the associated success-events are satisfied.
The lower bounds $\bbP( 0\text{ is occupied}\, |\,S_0)   \geq 1-4 \e'$ and \eqref{soglia2} follow   from the inequalities
\be\label{catenina}
\bbP( S_{i+1}| S_0, S_1,  \cdots, S_i) 
\geq 
\begin{cases}
1- 4 \e' & \text{ for }i=0\,,\\
1-\e' & \text{ for } i\in 
\rrr{\{1,2,3\}} \,,\\
1- 3 \e '& \text{ for }\rrr{i=4}\,,
\end{cases}
\en
by applying  the chain rule and the Bernoulli's inequality (i.e. $(1-\d)^k \geq 1- \d k$ for all $k\in \bbN$ and $\d\in [0,1]$).
\rui{Apart from} the case $i=0$, the proof of \eqref{catenina} can be obtained by applying Lemma \ref{pierpilori}. Below we will treat in detail the cases $\rrr{i=0,1}$. For the other cases we will 
give some comments, and show the validity of conditions \eqref{mare100} and \eqref{monti100} in Lemma \ref{pierpilori}.
In what follows we will introduce  points $\tilde b_1, \tilde b_2,\dots$. We stress that  the subindex $k$ in $\tilde b_k$ does not  refer to the $k$-th coordinate. We write $(\tilde b_k)_a$
for the $a$--th coordinate of $\tilde b_k$.
\subsection{Proof of \eqref{catenina} with $i=0$} \label{svezia}
 We want to show that 
 $\bbP (S_1|S_0) \geq 1-4 \e'$.   Since $S_0$ and $S_1$  are  increasing events w.r.t. $\preceq$, by the FKG inequality (see Section  \ref{scremato}) we have $\bbP(S_1|S_0) \geq \bbP(S_1)$. To show that $\bbP (S_1) \geq 1-4 \e'$, we note that the event $W_j:=\{B(m) \leftrightarrow K ^{(j)} (m,n)  \text{ in $B(n)$ for } \bbG\}$ implies that $C_1$ contains a point of $ K^{(j)} (m,n) $. Hence,
$\cap_{j=1}^{4} W_j\subset S_1$ and therefore (see Remark \ref{caffeina} \rrr{which is based on Proposition \ref{cinquina}}) 
$ \bbP (S_1^c) \leq \bbP\left( \cup_{j=1}^{ 4 } W_j^c\right) \leq 4 \e'$.
\subsection{Proof of \eqref{catenina} with $i=1$}
\label{venezia} We want to show that 
 $\bbP (S_2|S_0, S_1) \geq 1- \e'$. 
 \begin{Lemma}\label{barrio1}
Given $B(m)\subset  R_1\subset B_0'$,  the event $S_0\cap \{C_1=R_1\}$ is determined by the random variables $\{A_x\}_{x\in R_1\cup \partial R_1}$. 
\end{Lemma}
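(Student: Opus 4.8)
The plan is to unwind the definitions of $S_0$ and of $C_1$ (Definition \ref{gandalf}) and observe that both depend only on the field $(A_x)$ restricted to $B_0'$, together with the combinatorial data of which edges of $\bbG$ lie inside $B_0'$. First, I would recall that $S_0=\{B(m)\text{ is a seed}\}$ is by Definition \ref{def_seed} the event $\{A_x\in U_*\ \forall x\in B(m)\}$, hence manifestly in the $\sigma$--algebra generated by $(A_x)_{x\in B(m)}\subset (A_x)_{x\in R_1\cup\partial R_1}$. Next, $C_1$ is the set of $x\in B_0'$ with $\{x\}\leftrightarrow B(m)$ in $B_0'$ for $\bbG$; since the graph $\bbG=(\bbV,\bbE)$ has $\bbV=\{z:A_z\in\bbR\}$ and $\bbE$ determined by $|z-z'|\le h(A_z,A_{z'})-2\a$, the restriction of $\bbG$ to $B_0'$ — i.e. the induced subgraph on vertices $B_0'\cap\bbV$ — is a deterministic function of $(A_x)_{x\in B_0'}$. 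Therefore $C_1$ is a deterministic function of $(A_x)_{x\in B_0'}$ as well.

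The key point, and the only place where care is needed, is that the event $\{C_1=R_1\}$ must be shown to depend only on $(A_x)_{x\in R_1\cup\partial R_1}$ rather than on all of $(A_x)_{x\in B_0'}$. The plan is to argue as follows. On the event $S_0\cap\{C_1=R_1\}$: (i) that $R_1\subset C_1$ together with the connectivity within $B_0'$ is witnessed entirely by vertices and edges inside $R_1$, which involves only $(A_x)_{x\in R_1}$; and (ii) that no further vertex is reachable, i.e. $C_1\subset R_1$, is equivalent to saying that for every $z\in B_0'\setminus R_1$ at distance $\le 1-2\a$ from $R_1$ — i.e. for every $z\in\partial R_1$ (intersected with $B_0'$, but by the second relation in \eqref{mare100}-type reasoning $\partial R_1\subset B_0'$ here since $R_1\subset B_0'$ and edges have length $\le 1-2\a<1$) — either $A_z\notin\bbR$ or $|z-z'|>h(A_z,A_{z'})-2\a$ for every $z'\in R_1$ with $|z-z'|\le 1-2\a$. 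Both the ``interior'' assertion (i) and the ``boundary'' assertion (ii) are thus measurable with respect to $(A_x)_{x\in R_1\cup\partial R_1}$: the interior structure of $\bbG$ on $R_1$ uses $(A_x)_{x\in R_1}$, and checking the absence of an escaping edge uses, for each boundary point $z\in\partial R_1$, only $A_z$ and the values $A_{z'}$ for the finitely many $z'\in R_1$ within distance $1-2\a$.

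The main (mild) obstacle is bookkeeping: making sure that deciding $C_1=R_1$ never requires exploring an edge with both endpoints outside $R_1\cup\partial R_1$, which is exactly guaranteed because any such edge would start a path from $R_1$ only if its first step already lands in $\partial R_1$, so the first ``forbidden'' vertex is always in $\partial R_1$ and its edge back into $R_1$ is already covered. Putting (i) and (ii) together, $S_0\cap\{C_1=R_1\}$ lies in the $\sigma$--algebra generated by $\{A_x\}_{x\in R_1\cup\partial R_1}$, which is the claim. I expect this to be a short argument in the paper, essentially a ``domain of dependence'' remark of the same flavour as \cite[Lemma 5]{GM}; the proof should take only a few lines.
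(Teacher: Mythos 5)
Your argument is correct and essentially reproduces the paper's own proof: $S_0$ is trivially a function of $(A_x)_{x\in B(m)}$, and (conditioned on $S_0$) the event $\{C_1=R_1\}$ is rewritten as an ``interior'' connectivity condition on $R_1$ plus a ``no escape edge'' condition on $\partial R_1\cap B_0'$, with the converse checked by tracking the first/last crossing of $\partial R_1$ exactly as in the paper. One small inaccuracy: your parenthetical claim that $\partial R_1\subset B_0'$ need not hold (if $R_1$ touches the boundary of $B_0'$, part of $\partial R_1$ lies outside $B_0'$), but this is harmless since condition (ii) is only over $\partial R_1\cap B_0'$, and the measurability w.r.t.\ $\{A_x\}_{x\in R_1\cup\partial R_1}$ follows regardless.
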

\begin{proof} 
The claim is trivially  true for the event $S_0$.  It is therefore enough to show that, if $S_0$ takes place,  then the event $\{C_1=R_1\}$   is equivalent to the following: (i) 
for any $x\in R_1$ there is a path from $x$ to $B(m)$  inside  $R_1$ for $\bbG$ and (ii)    any $x\in \partial R_1\cap B_0'$ is not \rrr{adjacent}  to $R_1$ in $\bbG$, i.e. there is no $y\in R_1$ such that $\{x,y\}\in \bbE$. Trivially  the event $\{C_1=R_1\} $ implies (i) and (ii). On the other hand, let us suppose that (i) and (ii) are satisfied, in addition to $S_0$. Then (i) implies that $R_1\subset C_1$. Take, by contradiction,  $x\in C_1\setminus R_1$. By definition of $C_1$ there exists a path from $x$ to $B(m) $ in $B_0'$ for $\bbG$. Since $x\not \in R_1$ and  $B(m) \subset R_1$, there exists  a last point $x'$  in $R_1^c$ visited by the path. Since the path ends in $B(m) \subset R_1$, after  $x'$ the path visits another point $y$ which must belong to $R_1$. Hence we have $\{x',y\}\in \bbE$ (and therefore $x'\in \partial R_1 \cap B_0'$) and $y\in R_1$, thus contradicting (ii).
\end{proof}

We proceed with the proof  that $\bbP(S_2|S_0,  S_1)\geq 1-\e'$ by applying Lemma \ref{pierpilori}. Recall that $T_1(n)=T^*(n)$, $T_1(m,n)= T^*(m,n)$ and recall  Definition \ref{coca?} of $K_1(m,n)$.
We can write
\begin{multline} \label{sam0}
\bbP(S_2|S_0, 
S_1)\\
=\sum _{R_1, \tilde b_1} \bbP(S_2| S_0, S_1, C_1=R_1,   b^{(1)}=\tilde b _1 )  \bbP( C_1=R_1,  b^{(1)}=\tilde b_1 |S_0, S_1)\,,
\end{multline}
where in the above sum  $R_1  \subset \ezd$ and $\tilde b _1 \in \ezd$ are taken such that 
$\bbP( S_0, S_1, C_1=R_1,  b^{(1)}=\tilde b _1)>0$.
We now  apply Lemma \ref{pierpilori}  (with the origin there replaced by $\tilde b_1$) to lower bound  $\bbP(S_2|H_1)$ by $1- \e'$, where \rrr{the event $H_1$ is given by $S_0\cap  S_1\cap\{ C_1=R_1\}\cap\{  b^{(1)}=\tilde b _1\}$}.

\smallskip

  We first  check condition \eqref{mare100}. Note that 
  $B(m)\subset R_1\subset B_0'$ and $B(\tilde{b}_1,m)\subset R_1\cap T(m,n)$ as $\bbP(H_1)>0$. Hence we have  $(R_1 \cup \partial R_1) \subset
(B_0'\cup \partial B_0')$.
We point out that, given  $x\in  B_0'\cup \partial B_0'$, it must be $x_1 \leq n+ \e + 2m + 1-2\a  $.  On the other hand, given 
$x\in \tilde b_1+ \bigl( T^*(n) \cup T^*(m,n)\bigr)$, it must be $x_1\geq 2n+m+\e-1$. 
 As $2m < n$ and $2m|n$, we have $n\geq 4m$ and therefore $n>m+2$. Hence   $x$ cannot belong to both sets. In particular, 
 we have the analogous of \eqref{mare100}, i.e. 
$ B(\tilde b_1 , m) \subset R_1$ 
and $  \left( R_1\cup \partial  R_1 \right)\cap \bigl(
  \tilde{b}_1+\bigl( T^*(n) \cup T^*(m,n)\bigr)
  \bigr)=\emptyset$.
Condition \eqref{monti100} is trivially satisfied by taking $\L_1(x) := \emptyset $ for all $x\in R_1 \cup \partial R_1$ and $k_*=1$.

\smallskip

 We now  prove that $H_1$ belongs to the $\s$--algebra  $\cF_1$ generated by $(A_x)_{x\in R_1\cup \partial R_1}$. 
Due to Lemma \ref{barrio1}, the event $S_0\cap \{C_1=R_1\}$ is determined by $\{ A_x\}_{x\in R _1\cup \partial R_1}$. If  the event  $S_0\cap \{C_1=R_1\}$ takes place, then  the event   $S_1\cap \{ b^{(1)}=\tilde b_1\}$ becomes equivalent to the following: 
(1) 
  $B(\tilde b_1, m)$ is a seed and,  for any other seed $B(z, m)\subset R_1\cap T(m,n)$, $\tilde b_1$ is  lexicographically smaller than $z$; 
(2)  for any $j=2,3,4$ the set $R_1$ contains a point $x\in L_j(T(n))$ \rrr{adjacent}  for $\bbG$  to a seed  contained in $R_1\cap  L_j\bigl( T(m,n)\bigr)$.
Note that in Item (2) we have used Lemma \ref{ironman}, thus implying that  if a seed $B(z,m)$  is \rrr{adjacent} for $\bbG$ to a point $x\in R_1$, then any point of $B(z,m)$ is  connected for $\bbG$ to $x$, and therefore $B(z,m)\subset R_1$ as $C_1=R_1$.
The above  properties  $(1),(2)$ can be checked when  knowing $\{A_x\}_{x\in R_1\cup \partial R_1}$. Hence, $H_1$ belongs to the $\s$--algebra  $\cF_1$.

\smallskip

Due to  the above observations, 
 we can  apply Lemma \ref{pierpilori} with conditional event $H_1$,    $\tilde b_1 $ as new origin, $R_1$ as new set $R$,  $\L_1(x):=\emptyset $ for any $x\in R_1 \cup \partial R_1$ as new function $\L(x)$
   and   $k_*:=1$. We get that $ \bbP( G_1|H_1) \geq 1-\e'$, where 
  $G_1$ is  the event corresponding to the  event $G$ appearing in Lemma \ref{pierpilori} (replacing  also $K(m,n)$   by $K_1(m,n)$).  
To show that $ \bbP( S_2|H_1) \geq  1-\e'$, and therefore that  $\bbP(S_2|S_0,  S_1)\geq 1-\e'$ by \eqref{sam0},  it is enough to show that  $ G_1 \cap H _1\subset   S_2\cap H_1$. 
To this aim 
let us suppose that $G_1 \cap H_1 $ takes place. Let (P1),...,(P8) be the properties entering in the definition of $G$ in Lemma \ref{pierpilori}, when replacing  $R$, $B(n)$ and $K(m,n)$   by  $R_1$, $B(\tilde b_1,n)$ and   $K_1(m,n)$, respectively. 
To \rui{conclude} it is enough to show that $z_\ell\in C_2$ since $z_\ell \in  K_1 (m,n) $ by (P5). Note that by $H_1$,  (P1), (P2), (P6) and (P7) we have that $z_0\in C_1$ and $z_1\in E_1$, while by  $H_1$, (P3), (P4) and (P8) we get that $z_2, \dots, z_\ell \in F_1$. Since $C_2:=C_1\cup E_1\cup F_1$, we have that $ z_\ell\in C_2$.

\subsection{Generalized notation}

In order to define objects once and for all, given $\rrr{1}\leq i \leq \rrr{4}$ and given sets $R_1, R_2, \dots, R_i$ and points $\tilde b_1, \tilde b_2, \dots, \tilde b_{i}$ we set
\begin{align}
&  H_{i}:=
\left(\cap _{k=0} ^{ i} S_k\right)
 \cap \left( \cap _{k=1} ^{i}  \bigl\{  C_k = R_k \bigr\}  \right)
  \cap
   \left( \cap_{k=1} ^{i} \bigl\{ b ^{(k)}=\tilde{b}_{k}  \bigr\}\right)\,,
\label{acca}\\
& \L_i(x) :=\bigl\{ k\,:\, 1\leq k \leq i-1  \,, \; x\in B( \tilde b_k ,n+1) \bigr\}\,,   \quad  \forall x\in R_i \cup \partial R_i \,, \label{lambada}\\
&  \cF_{i}:= \s\left(  \{A_x\}_{x\in R_{i}\cup\partial R_{i}} , \{T_x^{(k)}\}_{ x\in R_{i}\cup \partial R_{i},\, k\in \L_{i}(x)}\right)\label{sigmund}\,.
\end{align}
Note that the above definitions cover also the objects $H_1, \L_1, \cF_1$ introduced in Section \ref{venezia}.

For later use, it will be convenient to write also $H_i[R_1, \dots, R_i;\tilde b_1,  \dots, \tilde b_{i}]$ (instead of $H_i$) to stress the dependence on  $R_1, \dots, R_i,\tilde b_1,  \dots, \tilde b_{i}$.

\subsection{Proof of \eqref{catenina} with \rrr{$i= 2,3$}}\label{sole87}
The proof follows the main arguments presented for the case \rrr{$i=1$}.  One has to condition similarly to \rrr{\eqref{sam0}} and afterwards apply Lemma \ref{pierpilori} 
  with  $k_*:=i$ and  $B(m)$, $B(n)$, $T(n)$, $T(n,m)$, $K (m,n)$,     $R$, $H$  and $\L(x)$   replaced by 
 $B(\tilde b_i,m)$,  $B(\tilde b_i,n)$,  $ \tilde b_i +T_i(n)$, $\tilde b_i+T_i(m,n)$, $K_i (m,n)$,  $R_i$, $H_i$  and $\L_i(x)$, respectively.   The fact that $H_i\in \cF_i$ can be obtained as for the case \rrr{$i=1$} by writing $H_i= H_{i-1}\cap  S_i \cap \{C_i=R_i\} 
 \cap
 \{ b^{(i)}= \tilde b _i \}$ and using the iterative result that $H_{i-1}\in \cF_{i-1}$ together with Lemma \ref{ciak2011}. 

To check    \eqref{mare100} is straightforward but cumbersome and we refer to Appendix \ref{natale45} for the details.
  
  \subsection{Proof of \eqref{catenina} with \rrr{$i=4$}}\label{sole88} 
%
%
For $j=1,2,3$, we define the event  $W_j:=\{\text{\rrr{$C_5$} contains at least one vertex inside $K^{(\rrr{4}+j)}$}\}$. Then $\rrr{S_5}= \cap_{j=1}^3 W_j$ and 
\begin{equation}
 \label{eq1}
 \bbP(\rrr{S_5}\,|\, \rrr{S_0,S_1,\cdots,S_{4}}) \geq 1-\sum _{j=1}^3   \bbP(W_j^c\,|\, \rrr{S_0,S_1,\cdots,S_{4}})\,.
\end{equation}
Hence, we only need to show that $\bbP(W_j\,|\, \rrr{S_0,S_1,\cdots,S_{4}})\geq 1- \e'$. We use Lemma \ref{pierpilori} and the same strategy used in the previous steps.
 Recall \eqref{acca}, \eqref{lambada}, \eqref{sigmund}.
 We have to    lower bound by $1-\e'$ the conditional probability $\bbP(W_j\,|\,H_{\rrr{4}})$ when $\bbP(H_{\rrr{4}})>0$.
   To this aim we apply Lemma \ref{pierpilori} with $R:=R_{\rrr{4}}$, $\L(x):=\L_{\rrr{4}}(x)$ and with $B(n)$, $T(n)$,  $T(m,n)$, $k_*$  replaced by $B(\tilde b_{\rrr{4}},n)$, $\tilde b_{\rrr{4}}+\hat T_j(n)$, $\tilde b_{\rrr{4}}+ \hat T_j(m,n)$ and $\rrr{4}$,  respectively.  The validity of \eqref{monti100}  is trivial. To check    \eqref{mare100} is straightforward but cumbersome and we refer to  Appendix \ref{natale46} for the details.


\appendix

\section{Example of Tanemura's algorithm }\label{app_tanemura}
To make the comprehension of Tanemura's construction easier we consider the following example guided by Figure \ref{tanemura2}. Suppose that $M=4$ (hence $\L=([0,5]\times [0,3])\cap \bbZ^2$) and that we have a field $\g\in \{0,1\}^\L$ defined on $(\O, \bbQ)$. We call $x\in \L$  occupied if $\g(x)=1$ and we say that  two occupied sites $x,y\in \L$ are  linked if $|x-y|=1$. As example, consider the configuration in Figure \ref{tanemura2}--(a) in which empty dots correspond to occupied points while  crosses correspond to  non--occupied  points. We start looking at the occupied points in the left vertical face of $\L$ and we define $C_1^j:=(E_1^j,F_1^j)=(\{x_1^j\},\emptyset)$ for $j=0,2,3$, while $C_1^1:=(E_1^1,F_1^1)=(\emptyset,\{x_1^1\})$. Then we focus on $C_1^0$ and we start constructing its extensions $\{C_s^0\}_{s=2}^{M^2}$. In particular (see Figure \ref{tanemura2}--(b)) we have 
$C_s^0:=(E_s^0,F_s^0)=(\{x_1^0,\ldots,x_s^0\},\emptyset)$ for $s=1,\ldots,4$, $C_5^0:=(E_5^0,F_5^0)=(\{x_1^0,\ldots,x_4^0\},\{x_5^0\})$ and $C_s^0:=(E_s^0,F_s^0)=(\{x_1^0,\ldots,x_4^0,x_6^0,\ldots,x_s^0\},\{x_5^0\})$ for $s=6,7,8$. 
 For the reader's convenience, we comment the construction of $C_6^0$. The point of $\L$ involved until the construction of $C_5^0$ are given by
\[
W^0_5=\{x_1^0,x_1^1,x_1^2,x_1^3, x^0_2, x^0_3, x^0_4,x^0_5\}\,.
\]
We have $E^0_5=\{x^0_1, x^0_2, x^0_3, x^0_4\}$,   $\bar{E}^0_5=( x^0_1, x^0_2, x^0_3, x^0_4)$ and $(\L\cap \D E^0_5)\setminus W^0_5=\{(1,1), (2,1), (3,1)\}$.
Property $\cP^0_5$ is trivially satisfied. Hence we denote by $x^0_6$ the last element of $(\L\cap \D E^0_5)\setminus W^0_5$ w.r.t. the ordering $\prec$ of $\D E^0_5$ induced by the string $\bar{E}^0_5$. Since $x^0_6$ is occupied and linked to $x^0_4$, we have 
$C^0_6=(E^0_6,F^0_6):=(E^0_5\cup\{x^0_6\},F^0_5)=(\{x_1^0,\ldots,x_4^0,x_6^0\},\{x_5^0\})$. The full dots  in Figure \ref{tanemura2}-(b) are occupied points that have been visited by the algorithm during the construction of $\{C_s^0\}_{s=1}^{M^2}$ and hence they are in $E_{M^2}^0$, while the grey big cross corresponds to  the  non--occupied site visited by the algorithm, which therefore belongs to $F_{M^2}^0$. We draw a dotted edge between $x_j^0$ and $x_h^0$, with $j<h$, if $x_h^0$ has been visited by the algorithm when analyzing the boundary of $x_j^0$.

 Let us focus now on Figure \ref{tanemura2}--(c). After having completed the construction of $\{C_s^0\}_{s=1}^{M^2}$, we start the construction of $\{C_s^1\}_{s=1}^{M^2}$ from $C_1^1:=(E_1^1,F_1^1)=(\emptyset,\{x_1^1\})$. Since $E_1^1$ is empty, we define $C_s^1:=C_1^1$ for all $s=2,\ldots,M^2$. Then we proceed with the construction of $\{C_s^2\}_{s=1}^{M^2}$ applying the same procedure used for $\{C_s^0\}_{s=1}^{M^2}$ and recalling at each step that we can visit only points that have not been analyzed in the previous steps. To distinguish $\{C_s^2\}_{s=1}^{M^2}$ from $\{C_s^0\}_{s=1}^{M^2}$, we have drawn continuous edges instead of the dotted edges introduced before.  For the reader's convenience, we comment the construction of $C^2_6$. We have $C^2_5=(E^2_5,F^2_5)=(\{x^2_1, x^2_2, x^2_3, x^2_4\}, \{x^2_5\})$,  $\bar E^2_5=(x^2_1, x^2_2, x^2_3, x^2_4)$ and $(\L\cap \D E^2_5)\setminus W^2_5=\{(1,3)\}$. Trivially, property $\cP^2_5$ is satisfied. We then set $x^2_6:= (1,3)$. Since $x^2_6$ is occupied and linked to $x^2_2$, we set $C^2_6= (E^2_6, F^2_6):=(E^2_5\cup\{x^2_6\},F^2_5)$.
Finally we consider $C_1^3:=(E_1^3,F_1^3)=(\{x_1^3\},\emptyset)$ and we note that all points inside $\L$ of  the boundary of $x_1^3$ have already been  visited. Hence we define $C_s^3:=C_1^3$ for $s=2,\ldots,M^2$. There is an occupied point in $\L$ that has not been visited (see the empty dot in Figure \ref{tanemura2}--(c)) and hence we define the field $\z$ equal to 0 on that point. As we can see in Figure \ref{tanemura2}--(c), at the end of the algorithm, we have obtained three trees (one of which is made only by a root) and we have two LR crossings since two trees have a branch that intersects the right vertical face of $\L$.

\begin{figure}
\includegraphics[scale=0.37]{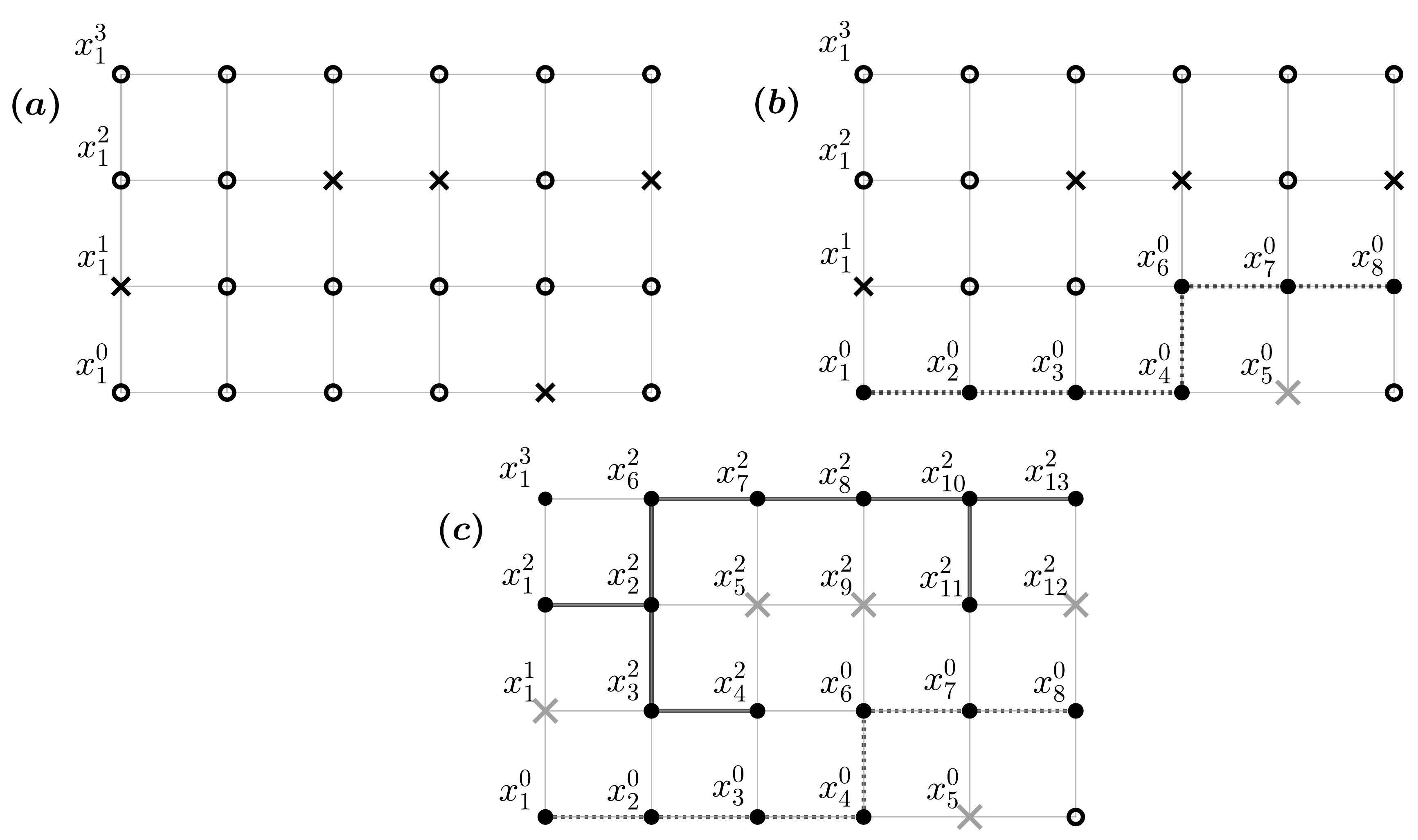}
\caption{}
\label{tanemura2}
\end{figure}

\section{Locations}\label{app_locus}
\begin{Claim}\label{locus2-3} Given $i\in \{\rrr{2,3}\}$, 
if  $S_1\cap \cdots \cap S_i$ occurs, then 
 $b^{(i)}+T_i(m,n)\subset B(\rrr{(i+1)}Ne_1,N)$.  
 \end{Claim}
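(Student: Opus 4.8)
The plan is to reduce the statement to an elementary coordinate-by-coordinate estimate, using the explicit form of the orthogonal map $g(\cdot\mid b^{(i)})$ defining $T_i(m,n)$ together with the localization of $b^{(i)}$ already recorded in \eqref{povoino}.

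I would fix $i\in\{2,3\}$ and assume $S_1\cap\cdots\cap S_i$ occurs, so that (as in Section \ref{moto_GP}) $b^{(i)}$ and $T_i(m,n)=g\bigl(T(m,n)\mid b^{(i)}\bigr)$ are well defined. By \eqref{povoino} we then have $b^{(i)}_1=iN$ and $b^{(i)}_j\in[-n+m,\,n-m]$ for $j\neq1$. Let $x$ be an arbitrary point of $b^{(i)}+T_i(m,n)$, written as $x=b^{(i)}+g\bigl(y\mid b^{(i)}\bigr)$ with $y\in T(m,n)$; thus $y_1\in[n+\e,\,n+\e+2m]$ and $y_j\in[0,n]$ for $2\le j\le d$. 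Since $N=n+m+\e$, the first coordinate satisfies
\[
x_1=b^{(i)}_1+y_1=iN+y_1\in[\,(i+1)N-m,\,(i+1)N+m\,],
\]
so $|x_1-(i+1)N|\le m\le N$.

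For $j\ge2$, the definition \eqref{gigi} of $g$ gives $x_j=b^{(i)}_j-\text{sgn}\bigl(b^{(i)}_j\bigr)\,y_j$ with $y_j\in[0,n]$. If $b^{(i)}_j\ge0$ then $x_j\in[\,b^{(i)}_j-n,\,b^{(i)}_j\,]\subset[-n,\,n-m]$, while if $b^{(i)}_j<0$ then $x_j\in[\,b^{(i)}_j,\,b^{(i)}_j+n\,]\subset[-n+m,\,n]$; in either case $|x_j|\le n\le N$. Combining with the bound on $x_1$ gives $\|x-(i+1)Ne_1\|_\infty\le N$, and since lattice membership is automatic we conclude $x\in B\bigl((i+1)Ne_1,N\bigr)$. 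As $x$ was arbitrary in $b^{(i)}+T_i(m,n)$, this proves the claim.

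I do not expect a genuine obstacle: the only care needed is to read off the action of $g(\cdot\mid b^{(i)})$ on each coordinate correctly (in particular the convention $\text{sgn}(0)=+1$) and to quote the already proven bounds \eqref{povoino} rather than re-deriving them. The cases $i=2$ and $i=3$ are handled identically, the sole difference being which instance of \eqref{povoino} is used.
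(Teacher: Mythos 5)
Your proof is correct and follows essentially the same approach as the paper's: apply \eqref{povoino}, bound the first coordinate using $N=n+m+\e$, and for $j\geq 2$ split into cases according to $\text{sgn}(b^{(i)}_j)$ to read off the action of $g(\cdot\mid b^{(i)})$. The only cosmetic difference is that you make the use of \eqref{gigi} and the convention $\text{sgn}(0)=+1$ explicit, which the paper leaves implicit.
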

\begin{proof} Let 
 $x\in (b^{(i)}+T_i(m,n))$. Then by  \eqref{povoino}
$x_1\in(b_1^{(i)}+N+[-m,m])=\rrr{i}N+N+[-m,m]\subset \rrr{(i+1)}N+[-N,N]$. Using again \eqref{povoino},   for $j\geq 2$ we get the following cases: (a) 
 if $b_j^{(i)}\geq 0$, then $x_j\in[b_j^{(i)}-n,b_j^{(i)}]\subset [-n,n-m]\subset[-N,N]$; (b)
 if $b_j^{(i)}< 0$, then $x_j\in[b_j^{(i)},b_j^{(i)}+n]\subset [-n+m,n]\subset[-N,N]$.
\end{proof}

\begin{Claim}\label{locus4}
If $S_1\cap \cdots \cap S_{\rrr{4}}$ occurs,  then $b^{(\rrr{4})}+\hat T_1(m,n) \subset B( 5N e_1, N)$, 
$b^{(\rrr{4})}+\hat T_2(m,n) \subset B(4N e_1+ N e_2, N)$ and  $b^{(\rrr{4})}+\hat T_3(m,n) \subset B(4N e_1- Ne_2, N)$.
\end{Claim}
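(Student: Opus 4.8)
The plan is to prove all three inclusions by a direct coordinatewise computation, unwinding the definitions of the isometries $g(\,\cdot\,|\,b^{(4)})$, $\theta$, $\theta^3$ and of the map $h$. First I would record the two ingredients used throughout. Since the event $S_1\cap\cdots\cap S_4$ occurs, by the discussion preceding the claim the vector $b^{(4)}$ satisfies \eqref{povoino} with $i=4$, i.e. $b^{(4)}_1=4N$ and $b^{(4)}_j\in[-n+m,\,n-m]$ for every $j\ge2$. Moreover $N=n+m+\e$, so $m<n<N$ and the first--coordinate window $[n+\e,\,n+\e+2m]$ of $T(m,n)$ coincides with $[N-m,\,N+m]$, while the remaining coordinates of a point of $T(m,n)$ range in $[0,n]$.

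Next I would fix a generic $y\in T(m,n)$ and compute the images entering the three sets. Writing $z:=g(y\,|\,b^{(4)})$, the definition of $g$ gives $z_1=y_1$ and $z_j=-\text{sgn}(b^{(4)}_j)\,y_j$ for $j\ge2$. Using that $\theta$ sends $e_1\mapsto e_2$, $e_2\mapsto -e_1$, $e_i\mapsto e_i$ (hence $\theta^3$ sends $e_1\mapsto -e_2$, $e_2\mapsto e_1$, $e_i\mapsto e_i$) and that $h$ replaces the first coordinate by its absolute value, one gets
\[
(h\circ\theta)(z)=\bigl(\,|z_2|,\,z_1,\,z_3,\dots,z_d\,\bigr),\qquad (h\circ\theta^3)(z)=\bigl(\,|z_2|,\,-z_1,\,z_3,\dots,z_d\,\bigr),
\]
where $|z_2|=y_2$ because $y_2\ge0$.

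Then I would add $b^{(4)}$ and bound each coordinate. For $b^{(4)}+\hat T_1(m,n)$: the first coordinate is $4N+z_1=4N+y_1\in[5N-m,\,5N+m]\subset 5N+[-N,N]$; for $j\ge2$ the $j$-th coordinate is $b^{(4)}_j-\text{sgn}(b^{(4)}_j)\,y_j$, which lies in $[b^{(4)}_j-n,\,b^{(4)}_j]$ if $b^{(4)}_j\ge0$ and in $[b^{(4)}_j,\,b^{(4)}_j+n]$ if $b^{(4)}_j<0$, hence in $[-n,n]\subset[-N,N]$ in either case by the bound on $b^{(4)}_j$. This gives the first inclusion. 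For $b^{(4)}+\hat T_2(m,n)$ and $b^{(4)}+\hat T_3(m,n)$ the coordinates $j\ge3$ are unchanged and again fall in $[-N,N]$; the first coordinate becomes $4N+|z_2|=4N+y_2\in 4N+[0,n]\subset 4N+[-N,N]$; and the second coordinate is $b^{(4)}_2\pm y_1$ with $y_1\in[N-m,N+m]$ and $b^{(4)}_2\in[-n+m,n-m]$, so $b^{(4)}_2+y_1\in[N-n,\,N+n]\subset N+[-N,N]$ and $b^{(4)}_2-y_1\in[-(N+n),\,-(N-n)]\subset -N+[-N,N]$. Recalling $B(cNe_1\pm Ne_2,N)=cNe_1\pm Ne_2+([-N,N]^d\cap\ezd)$ and $n<N$, these are exactly the remaining two inclusions. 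Since the whole argument is explicit interval arithmetic there is no real obstacle; the only delicate point is bookkeeping the signs — the action of $\theta$ and $\theta^3$ on $e_1,e_2$, the convention $\text{sgn}(0)=+1$, and picking the correct half of the range of $b^{(4)}_j$ according to its sign — so I would record the coordinatewise formulas above as a preliminary display and reduce each inclusion to the one-line estimates just indicated; the same computation (already invoked in the text) covers the case $i\in\{2,3\}$ of Claim \ref{locus2-3}.
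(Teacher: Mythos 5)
Your proof is correct and follows essentially the same route as the paper: a direct coordinatewise computation unwinding the isometries $g(\cdot\,|\,b^{(4)})$, $\theta$, $\theta^3$ and $h$, combined with the bounds \eqref{povoino} for $i=4$ and the identity $N=n+m+\e$. The paper's proof is slightly terser (it delegates the $\hat T_1$ inclusion and the $j\ge 3$ coordinates to the argument of Claim \ref{locus2-3} and only spells out the second coordinate for $r=2,3$), whereas you carry out all coordinates explicitly, but the content is identical.
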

\begin{proof} The inclusion concerning $b^{(\rrr{4})}+\hat T_1(m,n)$ can be derived as in the proof of Claim \ref{locus2-3}. 
Consider now $x\in (b^{(\rrr{4})}+\hat T_r(m,n))$ for $r\in\{2,3\}$. Then by \eqref{povoino} with $i=\rrr{4}$ we get that  $x_1\in  b_1^{(\rrr{4})} +[0,n]\subset 4N+[-N,N]$ and (a) $ x_2\in b_2^{(\rrr{4})}+N+[-m,m] \subset N+[-n,n]$ for $r=2$; (b) $ x_2\in b_2^{(\rrr{4})}-N+[-m,m] \subset -N+[-n,n]$
 for $r=3$. Using \eqref{povoino} with $i=\rrr{4}$ one finally gets that $|x_j |\leq N$ for $j\geq 3$, as in the proof of Claim \ref{locus2-3}. 
\end{proof}

\subsubsection{Validity of  condition \eqref{mare100} in Section \ref{sole87}}\label{natale45}   The  inclusion in \eqref{mare100} is trivially satisfied in all steps. We concentrate only on the second property in \eqref{mare100}, concerning disjointness. 
 
 To proceed we first recall that \eqref{povoino} holds for $i=\rrr{2,3}$.
 To get the disjointness in \eqref{mare100}  for $i\in\{\rrr{2,3}\}$ we argue as follows. We observe that 
 $R_{i}\cup\partial R_i\subset \cup_{k=0}^{i-1}(B'_k\cup\partial B_k')$ and points in $\cup_{k=0}^{i-1}(B'_k\cup\partial B_k')$ have their first coordinate not bigger than $\rrr{i}N+m+1$ (cf. Fig.~\ref{aquile}, \eqref{povo} and  \eqref{povoino} for $i-1$ instead of $i$). On the other hand, points in $\tilde b _i+\bigl( T_i(n)\cup T_i(m,n)\bigr)$ have their first coordinate not smaller than $\rrr{i}N+n-1$ (cf.~\eqref{povoino}). Since $n-m>2$
we get that   $\tilde b_i+\bigl( T_i (n)\cup T_i(m,n)\bigr) $ and $R_{i}\cup\partial R_{i}$ are disjoint for $i\in\{\rrr{2,3}\}$.

  \subsubsection{Validity of  condition \eqref{mare100} in Section \ref{sole88}}\label{natale46}  
The disjointness of $R_{\rrr{4}} \cup \partial R_{\rrr{4}} $ and $ \tilde b_{\rrr{4}} +\bigl( \hat T_1(n)\cup \hat T_1(m,n) \bigr)$ follows easily from the fact that   $x_1 \leq (\tilde b_{\rrr{4}})_1 + m+1$ for points $x$ in the first set, while  $x_1\geq   (\tilde b_{\rrr{4}})_1 +n -1$ for points $x$ in the second set.

 We  now show that $R_{\rrr{4}} \cup \partial R_{\rrr{4}} $ and $ \tilde b_{\rrr{4}} +\bigl( \hat T_3(n)\cup \hat T_3(m,n) \bigr)$ are disjoint (the result for $ \tilde b_{\rrr{4}} +\bigl( \hat T_2(n)\cup \hat T_2(m,n) \bigr)$ is similar).
 Suppose first that  $(\tilde b_{\rrr{3}} )_2 \geq 0$.  Then $(\tilde b_{\rrr{4}})_2 \in (\tilde b_{\rrr{3}})_2 + [-n,\rrr{-m}]$. 
  By construction, if $x\in R_{\rrr{4}} $ with $x_1 \geq 4N-m$, then $x\in \tilde b_{\rrr{3}}+T_{\rrr{3}}(m,n)$ and therefore $ x_2 \geq 
 (\tilde b_{\rrr{3}})_2 -n$. 
 Take now  $y\in \tilde b_{\rrr{4}}+\bigl( \hat T_3(n)\cup \hat T_3(m,n) \bigr)$. Then $y_1 \geq  (\tilde b_{\rrr{4}})_1=4N$ and   $y_2 \leq (\tilde b_{\rrr{4}})_2 -n+1 \leq 
 (\tilde b_{\rrr{3}})_2-m-n+1$.
  Suppose by contradiction  that $y\in R_{\rrr{4}} \cup \partial R_{\rrr{4}}$. Then there exists $x\in R_{\rrr{4}}$ such that $|x-y|<1$.  This implies that $x_1 \geq y_1-1\geq 4N-1$. Hence, by the initial observations, $x_2 \geq (\tilde b_{\rrr{3}})_2 -n $.  This last bound, together with 
   $y_2 \leq 
 (\tilde b_{\rrr{3}})_2-m-n+1$ and $|x_2-y_2|\leq 1$, leads to a contradiction as $m>2$.
  
Suppose  that  $(\tilde b_{\rrr{3}})_2 <  0$. Then $(\tilde b_{\rrr{4}})_2 \in (\tilde b_{\rrr{3}})_2 + [m, n-m]$. 
  By construction, if $x\in R_{\rrr{4}}$ with $x_1 \geq 4N-m$, then $x\in \tilde b_{\rrr{3}}+T_{\rrr{3}}(m,n)$ and therefore $ x_2 \leq 
 (\tilde b_{\rrr{3}})_2 +n$. 
 Take  $y\in \tilde b_{\rrr{4}}+\bigl( \hat T_3(n)\cup \hat T_3(m,n) \bigr)$. Then $y_1 \geq  (\tilde b_{\rrr{4}})_1=4N$ and   $y_2 \geq  (\tilde b_{\rrr{4}})_2 +n-1 \geq  
 (\tilde b_{\rrr{3}})_2+m  + n-1$.
  Suppose by contradiction  that $y\in R_{\rrr{4}} \cup \partial R_{\rrr{4}}$. Then there exists $x\in R_{\rrr{4}}$ such that $|x-y|<1$.  This implies that $x_1 \geq y_1-1\geq 4N-1$. Hence, by the initial observations, $x_2 \leq  (\tilde b_{\rrr{3}})_2 +n $.  This last bound, together with 
   $y_2 \geq 
 (\tilde b_{\rrr{3}})_2+m  + n-1$ and $|x_2-y_2|\leq 1$, leads to a contradiction as $m>2$.


 \section{Derivation of   \eqref{problema2d}  from the bound $\bbQ(N_M\geq c _1 M) \geq 1- e^{-c_2 M}$
 in Section \ref{sec_ginepro}}\label{app_ultimatum}
In this appendix  we explain  in detail how to derive    \eqref{problema2d} with $k:=4N$ from the bound $\bbQ(N_M\geq c _1 M) \geq 1- e^{-c_2 M}$
obtained in Section \ref{sec_ginepro}.  Below $c_3,c_4,..$ will be positive constant, independent from $M$ and $L$.  Since we are interested in $L$  large enough, without loss we think of $M$ as larger than $3$. To help in following the arguments below, we have provided Figure \ref{traguardo}.

\begin{figure}
\includegraphics[scale=0.25]{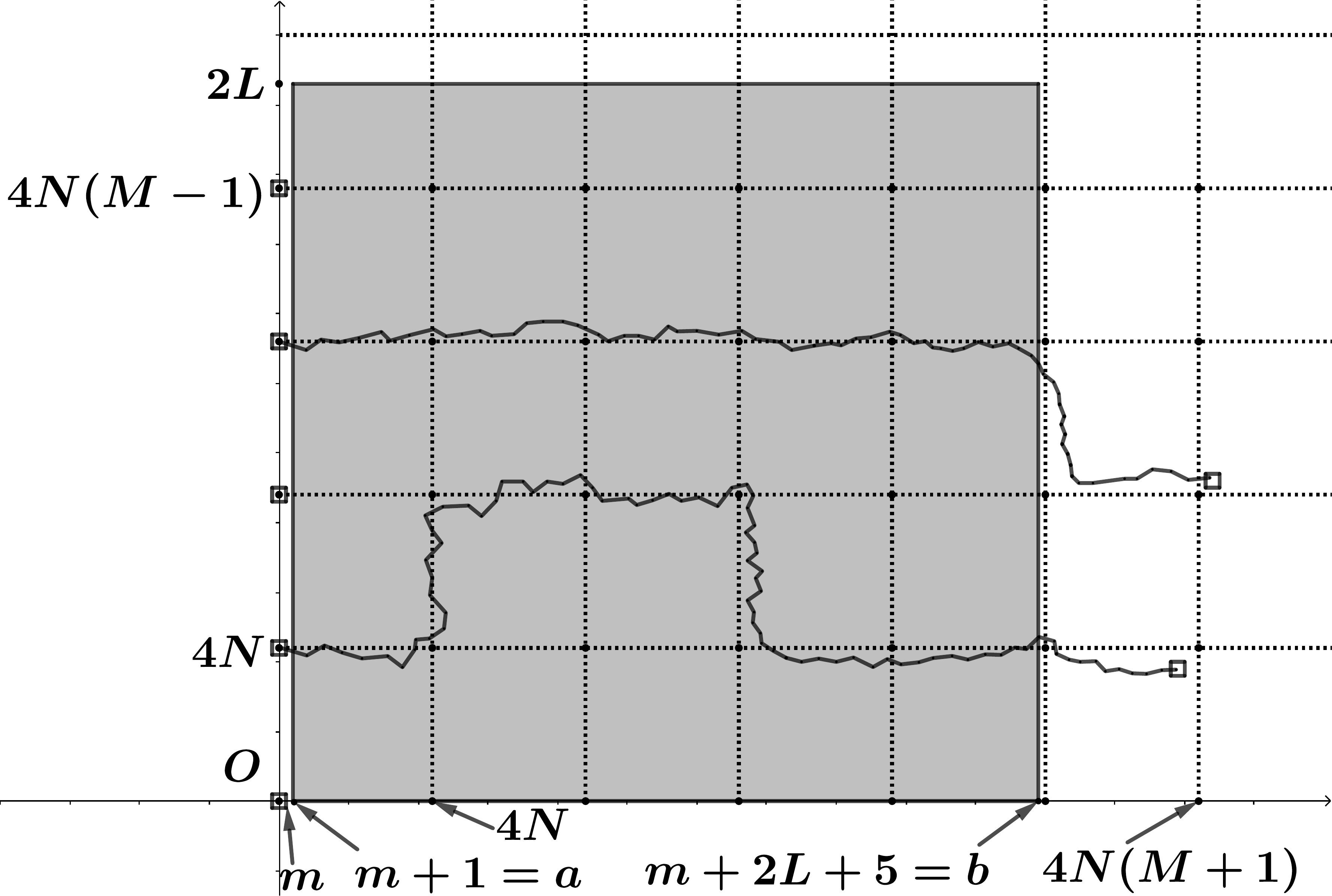}
\captionsetup{width=.9\linewidth}
\caption{$M=5$, $d=2$. Black circles are points in 
$ \{ 4N \bar x\,:x \in \L\}\subset 4 N \bbZ^d$. The grey box represent the slice $S$. Paths   are  in $\bbG_+$   }
\label{traguardo}
\end{figure}

Due to the  translation invariance of $\bbP$, we can traslate all geometrical objects by the vector $(L+3+m)e_1 + L e_2$. In particular, instead of the slice \eqref{rondine} we will consider the  translated one \be\label{slime}
S:= [a,b] \times [0,2L] \times [-4N, 4N)^{d-2}\,, \qquad a:= m+1\,,\; b:=2L+5+m \,. 
\en
Recall that $ \L :=( [0,M+1] \times [0, M-1] )\cap \bbZ^2$  and recall the notation \eqref{deep}. The map $\L \ni x \mapsto 4 N \bar{x} \in 4N \bbZ^d$ is our  natural immersion of $\L$ into  the rescaled lattice $4N \bbZ^d$. The right vertical face of $\L$ is then mapped inside the set $\{ x\in \bbZ^d\,:\, x_1= 4N (M+1)\}$. Recall that $M$ is the minimal integer such that  \be\label{underwater}
4N (M+1) > b+N\,.\en

Without loss of generality, when referring to the LR crossings of the box $\L$ for $\z$ we restrict   to crossings such that only the first and the last points intersect the vertical faces of $\L$ (which would not change the random number $N_M$).
We fix a set $\G'$ of vertex--disjoint LR crossings  of $\L$ for $\z$  with cardinality $N_M$. Then we define $\G$ as the set of paths $(x_1,x_2, \dots, x_k)$ in $\G'$ such that $x_i$ has second coordinate in $[1,M-2]$  for each $i$ (we stress that the index $i$ here labels points and not coordinates).
Note that, since $\L$ is bidimensional, $|\G|\geq |\G'|-2$. 

%

 Take a LR crossing $(x_1,x_2, \dots, x_k)$ in $\G$.   By the discussion in 
 Section \ref{sec_ginepro},
    we get that there is a path $\g$ in $\rrr{\bbG_+}$ from $B(4N \bar x_1,m)$ to $B(4 N \bar x_k,N)$  without self-intersections. Moreover,  this path is included in the  region  $\cR$ obtained as union of the boxes $B(4N  \bar{x}_i , \rrr{2N})$ with $i=1,\dots, k$ \rrr{(see Fig.~\ref{aquile})}. 
    \rrr{As the second coordinate of any point $\bar{x}_i $ varies in $[1,M-2]$,}  the second coordinate of  any point in  $    B(4N  \bar{x}_i ,  \rrr{2N})$   is in \rrr{$[4N-2N, 4N(M-2)+2N] \subset [0,2L]$. Note that the last inclusion follows from the bound      $4NM\leq b+N=2L + 5+m+N$ due to 
 the minimality of $M$ in \eqref{underwater}.  }
  In addition,  the box  $ B(4N \bar x_1, m)$ lies in  the halfspace $\{(z_1,\dots, z_d) \in \e \bbZ^d\,: z_1 \leq m \}$, while the box $ B(4N \bar x_k, N)$  lies  in the halfspace $\{(z_1,\dots, z_d) \in \e \bbZ^d\,: z_1 \rrr{\geq b+1}\}$ \rrr{as   \eqref{underwater} implies that $4N (M+1) \geq b+N+1$}. As a consequence we can extract from the above path $\g$  a new  path $\tilde \g$  for  $\rrr{\bbG_+}$  lying in
\rrr{$\cR \cap   \{(z_1,\dots, z_d) \in \e \bbZ^d\,: 0\leq z_2 \leq 2L  
\} \subset  \e \bbZ \times [0,2L] \times [-4N,4N)^{d-2}$, whose first and last points have 
  first  coordinate  $\leq m$ and $\geq  b+1$, respectively.} 
  
  \rrr{At cost to further refine $\tilde \g $, we have that $\tilde \g$ has the following property $\cP$: $\tilde \g$ is  a LR crossing  of the box $[a,b]\times [0,2L]\times [-L,L]^{d-2}$  (cf. Definition 
  \ref{def_LR_bis}) whose vertexes, \rui{apart from} the first and last one, are contained in $S= [a,b] \times [0,2L] \times [-4N, 4N)^{d-2}$, while the first and last one are contained, respectively, in $(-\infty,a)\times [0,2L]\times [-4N, 4N)^{d-2}$ and $(b,+\infty) \times [0,2L]\times [-4N, 4N)^{d-2}$. Note that the above box is the translated version of the box  $\D_L$ appearing in Proposition \ref{carletto} by the vector $(L+3+m)e_1+Le_2$, and that     one has to translate all geometrical objects by the same vector when applying Definition 
  \ref{def_LR_bis}.
  }
  
    Moreover, due to the \rrr{bidimensionality of $\L$},  there is an integer $\ell$ \rrr{(independent from $N,M,L$)}  such that  every path  $\tilde \g$   can share some vertex with at most $\ell$ paths $\tilde \g'$ \rrr{built in a similar way}. 
 Since $M\geq  \rrr{c_3} L$ \rrr{for some $c_3>0$}, by the above observations we have proved that the event $\{ N_M \geq \rrr{c_1} M\}$ implies the event $F$ that  \rrr{there are  at least $ c_4 L$ vertex--disjoint LR crossings for $\rrr{\bbG_+}$ with the above property $\cP$}.  Hence, by our bound 
\rrr{$\bbQ(N_M\geq c _1 M) \geq 1- e^{-c_2 M}$}
   and since $M \leq \rrr{c_5} L$, we get  that  $\bbQ(F) \geq 1- e^{- \rrr{c_2} L}$. 
Since edges in $\rrr{\bbG_+}$ have length \rrr{strictly smaller than $1$}, the event $F$ does not depend on the vertexes of $\rrr{\bbG_+}$ in $\cup_{s=0}^{M-1} B(4N \bar x_1^s, m)$, neither on the edges exiting from the above region. \rrr{Indeed, by Definition \ref{def_LR_bis}, to check property $\cP$ it would be enough to know  the graph $\bbG_+$ (vertexes and edges)  inside the region 
  $\{(z_1,\dots, z_d ) \in \ezd \,:\,z_1>a-1=m\}$}. 
 In particular, \rrr{the event $F$ and  the event $D$ defined by \eqref{dedalo}}  are independent, thus implying that
$\bbP(F)= \bbP( F|\rrr{D}) = \bbQ(F) \geq  1- e^{- \rrr{c_2} L}$, and in particular \eqref{problema2d} is verified.

\bigskip
\bigskip

{\bf Acknowledgements}. The authors thank Davide Gabrielli and Vittoria Silvestri for useful discussions. They also thank an anonymous referee for valuable  comments improving the presentation  of the results. 

\end{document}